\theoremstyle{plain}
\theoremstyle{plain}
\newtheorem{theorem}{Theorem}[section]
\newtheorem{proposition}[theorem]{Proposition}
\newtheorem{lemma}[theorem]{Lemma}
\theoremstyle{definition}
\newtheorem{defin}[theorem]{Definition}
\newtheorem{remark}[theorem]{Remark}
\theoremstyle{remark}
\def\div{{\rm div}}
\def\huz{H^{1}_{0}(\Omega)}
\def\huzu{H^{1}_{0}(0,L)}
\def\bk{\color{black}}
\def\ou{\overline{u}}
\def\re{\mathbb{R}}
\numberwithin{equation}{section}
\def\dys{\displaystyle}
\def\vare{\varepsilon}
\DeclareMathOperator{\R}{\mathbb{R}}
\newcommand{\car}[1]{\raise1pt\hbox{$\chi$}_{#1}}
\def\og{\leavevmode\raise.3ex\hbox{$\scriptscriptstyle\langle\!\langle$~}}
\def\fg{\leavevmode\raise.3ex\hbox{~$\!\scriptscriptstyle\,\rangle\!\rangle$}}
\author[D. Giachetti]{Daniela Giachetti}
\address[D. Giachetti]{Dipartimento di Scienze di Base e Applicate per l'Ingegneria, Sapienza Universit\`a di Roma, Via Scarpa 16, 00161 Roma, Italy 
	\\ daniela.giachetti@sbai.uniroma1.it}
	\author[P. J. Mart\'inez-Aparicio]{Pedro J. Mart\'inez-Aparicio}
\address[P. J. Mart\'inez-Aparicio]{Departamento de Matem\'aticas,
Universidad de Almer\'ia, Ctra. Sacramento s/n, La Ca\^{n}ada de San Urbano, 04120 Almer\'ia, Spain, pedroj.ma@ual.es}
	\author[F. Murat]{Fran\c{c}ois Murat}
\address[F. Murat]{Laboratoire Jacques-Louis Lions, Bo\^ite courrier 187, Sorbonne Universit\'e, 4 place Jussieu, 75252 Paris cedex 05, France, francois.murat@sorbonne-universite.fr}
\author[F. Petitta]{Francesco Petitta}
\address[F. Petitta]{Dipartimento di Scienze di Base e Applicate per l'Ingegneria, Sapienza Universit\`a di Roma, Via Scarpa 16, 00161 Roma, Italy 
	\\ francesco.petitta@uniroma1.it}
\keywords{Elliptic problems, Singular terms, Divergence terms, Existence, non-existence, Multiplicity} \subjclass{35J75, 35B30, 35B35, 34A06, 34A12}
\begin{document}

\title[Unexpected phenomena  in a one-dimensional singular elliptic equation]{Unexpected phenomena \\ in a one-dimensional elliptic equation \\with a singular first order divergence term}

\begin{abstract}
In this paper we study the possible solutions $u$ of the one-dimensional non-linear singular problem which formally reads as 
\begin{equation}\label{intro}\tag{S}
\begin{cases}
\dys -\frac{d}{dx}\left(a(x) \frac{d u}{dx}\right)  = - \frac{d \phi (u) }{dx}- \frac{d g(x)  }{dx}& \text{in}\;(0,L),\\
u(0)=u(L)=0\,, & 
\end{cases}
\end{equation}
where $L>0$, and where the data ($a$, $g$, $\phi$) are as follow:   $a$ is a function of $L^\infty(0,L)$ which is bounded between two positive constants,   $g$ is a function of $L^2(0,L)$, and the singular function $\phi:\mathbb{R}\mapsto \mathbb{R}\cup \{+\infty\}$  is   continuous as a function with values in  $\mathbb{R}\cup \{+\infty\}$, and satisfies $\phi(0)=+\infty$ and $\phi(s)<+\infty$ for every $s\in\mathbb{R}$, $s\not=0$; the model example for the singular function $\phi$ is 
$\phi_\gamma(s)={|s|^{-\gamma}}$
with $\gamma>0$.

We first study the behaviour of the solutions of approximating problems  (S$_n$) involving  non-singular functions $\phi_n$ which converge to $\phi$ in a sense that we specify, and we prove that these solutions have subsequences which either converge to weak solutions of \eqref{intro} (for a definition of weak solution that we specify), or converge to zero. We then prove that for a large class of  data ($a$, $g$, $\phi$) it does not exist any weak solution of \eqref{intro}, while for another large class of  data ($a$, $g$, $\phi$) it exists at least one weak solution of \eqref{intro}.

Thanks to the study of an associated singular ODE (this study is of independent interest), we prove that under additional assumptions which are satisfied by the model example 
$\phi_\gamma(s)={|s|^{-\gamma}}$ when $0<\gamma<1$, if for some data ($a$, $g$, $\phi$) there exists one weak solution of \eqref{intro}, then for the same data it also exists an infinity of weak solutions of \eqref{intro} which are parametrized by $c\in(-\infty,c^*]$ for some finite $c^*$.

We finally prove that for any given data ($a$, $g$, $\phi$)  and for any weak solution $u$ of \eqref{intro} corresponding to  these data, there exist sequences of data ($a$, $g_n$, $\phi_n$) with non-singular functions $\phi_n$ which converge to ($a$, $g$, $\phi$) for which the solutions  converge to $u$, while there also exist other sequences of data ($a$, $g_n$, $\phi_n$) with non-singular functions $\phi_n$ which converge to ($a$, $g$, $\phi$) for which the solutions converge to zero.

Most of these results are unexpected.

\end{abstract}

\maketitle

\tableofcontents

\section{Introduction}

\noindent{\bf Setting of the problem} 

In the present paper we deal with a  singular one-dimensional problem. Our main aim  consists in finding a function $u$ which formally satisfies

\begin{equation}
\label{pb1i}
\begin{cases}
\displaystyle-\frac{d}{dx}\left(a(x) \frac{d u}{dx}\right)  = - \frac{d \phi (u)}{dx} - \frac{d g }{dx} & \text{in}\;(0,L),\\
u(0)=u(L)=0\,. & 
\end{cases}
\end{equation}

We will soon give a mathematically correct   and natural definition of solutions of this problem, but let us first give the main assumptions on its data.  \\

 We will assume that the data ($a$, $g$, $\phi$) satisfy 
\begin{equation}\label{a1i}
a\in L^{\infty}(0,L)\,,\ \ \exists\ \alpha>0: \ a(x)\geq \alpha\,,\ \ \text{a.e. in}\ \ (0,L)\,;
\end{equation}
\begin{equation}\label{g1i}
g\in L^2(0,L)\,;
\end{equation}
\begin{equation}
\label{condphi1i}
\begin{cases}
\phi:\R\mapsto\R\cup\{+\infty\},
\,\,\phi\,\, \mbox{is continuous with values in } \R\cup\{+\infty\},\\
 \phi(s)<+\infty \ \ \forall s\in\R,\,\, \ s\neq 0\,.
\end{cases}
\end{equation}

Our main purpose is to study the case where 
\begin{equation} \label{p3i}\phi(0)=+\infty\,\ \ \ \end{equation}
even though some results will be true (and new) also in the case where $\phi(0)<+\infty.$\\

The model case for the function $\phi$ is 
\begin{equation*}
\dys\phi_\gamma (s)=\frac{c}{|s|^{\gamma}} +\varphi (s),
\mbox{ with } c>0, \,\gamma>0,\, \varphi\in C^0(\R)\,.
\end{equation*}

Assuming $ \phi(0)=-\infty$ in place of $ \phi(0)=+\infty$  is just a variant of problem \eqref{pb1i} by a simple change of variable (see Remark~\ref{rm-infty} below) and we will not treat this case.
 \bigskip
 
 \medskip 
 \noindent{\bf Definition of a weak solution}  
 
 We introduce  the following definition of weak solution of problem \eqref{pb1i}. For more details see Definition~\ref{defin} and Subsection~\ref{definitionws} below.

\begin{defin}\label{defini}
We say that $u$ is a {\it weak solution} of problem \eqref{pb1i} if $u$ satisfies 
\begin{equation}\label{form-defi}
\begin{cases}
\displaystyle u\in \huzu\,, &\phi(u)\in L^2(0,L),  \\ \\ 
\displaystyle-\frac{d}{dx}\left(a(x) \frac{d u}{dx}\right) & = \displaystyle- \frac{d \phi (u)}{dx} \displaystyle- \frac{d g }{dx} \ \  \text{in} \ \ \mathcal{D}'(0,L)\,.
\end{cases}
\end{equation} \qed
\end{defin}

Note that \eqref{form-defi} holds true
if and only if $u$ solves  (see Proposition~\ref{remequiv})
\begin{equation}\label{equivi}
\begin{cases}
 u\in \huzu \quad \phi(u)\in L^2(0,L), &\\
\dys a (x)\frac{d u}{dx}    =  \phi (u) + g +c & \text{in}\;(0,L)\,,
\end{cases}
\end{equation}
for 
\begin{equation} 
\displaystyle c= \dys -  \frac{\dys\int_0^L \frac{\phi(u)}{a(x)} dx +\dys\int_0^L\frac{g}{a(x)} dx}{\dys\int_0^L\frac{1}{a(x)} dx}\,.
\label{oce}\end{equation}

 \medskip 
\noindent{\bf   Three  other singular problems   and some  references}

 If one has $ \phi\in C^0(\R) $  and $N\geq 1$, it is proved in \cite{BGDM2} and \cite{BGDM} that there exists a {\it renormalized solution} of the $N$-dimensional problem which formally reads as
\begin{equation}
\label{pb2i}
\begin{cases}
\displaystyle -\div (A(x) \nabla u) = - \div (\phi (u))- \div g& \text{in}\,\,  \Omega,\\
u(x)=0  & \text{on} \,\,\partial \Omega,
\end{cases}
\end{equation}
where $\Omega$ is a bounded domain of $\re^N$ and  $A(x)$ is a bounded matrix satisfying, for some $\alpha>0$,  
$$
A(x)\xi\cdot\xi\geq \alpha |\xi|^2, \ \ \forall \ \xi\in \re^N.
$$
These solutions turn out to be also weak solutions if the growth at infinity of $\phi(s)$ is sufficiently low. 

\bigskip

In the special case  $N=1$ with $ \phi\in C^0(\R)$  this notion coincides with the notion of classical weak solution:
\begin{equation*}
\label{vf2i}
\begin{cases}
\dys u\in \huzu,  \\ 
\dys\int_0^L a(x)\frac{du}{dx}\frac{dz}{dx}\, dx= \int_0^L \phi(u)\frac{dz}{dx}\,dx+\int_0^L g(x)\frac{dz}{dx}\,dx\quad \forall z\in H_0^1(0,L),
\end{cases}
\end{equation*}
and there exists a classical   weak solution of this problem (see Proposition \ref{propexis} below).

\bigskip
 Concerning semilinear problems involving singular lower order terms $h(x,u)$, namely
 \begin{equation}\label{prob:g=0h}
    \left\{\begin{array}{ll}
   \displaystyle-\div(A(x) \nabla u) = h(x,u) &\text{ in } \Omega,\\
   u=0 &\text{ on } \partial\Omega,
\end{array}\right.
\end{equation} 
 let us quote several important papers. The cases $\dys h(x,s)=f(x)e^{1/s}$ or $\dys h(x,s)=\frac{f(x)}{s^\gamma}$ for a regular function $f(x)$ are treated in~\cite{Fulks-Maybee} where the authors prove the existence of a classical solution when  $A(x)$ is  the identity matrix.  In~\cite{Crandall, Stuart} similar results are proved for a regular matrix $A(x)$ and a regular function $h(x,s)$ uniformly bounded for $s>1$ with $\lim_{s\to 0} h(x,s) = +\infty$ uniformly for $x\in\overline{\Omega}$. Moreover,  continuity properties of the solution are proved in \cite{Crandall} if $h(x,s)$ does not depend on $x$.

The case where the nonlinearity is of the form $\dys h(x,s)=\frac{f(x)}{s^\gamma}$ with $f(x)$ a positive H\"older continuous function in $\overline{\Omega}$ is studied in  \cite{LM} where it is proved that problem \eqref{prob:g=0h} has a classical solution which does not always belong to $H_0^1(\Omega)$. More precisely the authors prove in \cite{LM} that the solution belongs to $H_0^1(\Omega)$ if and only if $\gamma<3$. Furthermore, they demonstrate that for $\gamma>1$ the solution is not in ${C}^1(\overline{\Omega})$. 
In the case $h(x,s)=f(x)\Tilde{h}(s)$, some extensions may be found, among others,   in~\cite{Lair-1,Lair-2} for $\Omega=\mathbb{R}^N$ and  in~\cite{Z} for bounded domains. In the latter case $f(x)$ may also be singular   at the boundary of $\Omega$.

Let us highlight the paper~\cite{BO}, in which the authors extensively study the semi-linear problem  in  the case $\dys h(x,s)=\frac{f(x)}{s^\gamma}$ with $f\geq 0$, $f\in L^m(\Omega)$ for $m\geq 1$,   where they prove existence results depending on $\gamma$ and on the summability of $f$. For $\gamma=1$ and $f\in L^1(\Omega)$, they prove the existence of a solution belonging to $H_0^1(\Omega)$.  They also prove a similar result when   $f\in L^m(\Omega)$ with $m\geq  C(N, \gamma)>1$. Finally, for the case $\gamma>1$ and $f\in L^1(\Omega)$ they prove the existence of a solution $u$ belonging to $H^1_\mathrm{loc}(\Omega)$ satisfying $u^{\frac{\gamma+1}{2}}\in H_0^1(\Omega)$.

In~\cite{OP} (see also \cite{AMM}),  the authors prove the existence of a   solution in $H_0^1(\Omega)$ if $f\in L^m(\Omega)$, $f$ positive,   $1<\gamma<3-\frac{2}{m}$. These results are optimal  for $f\in L^\infty(\Omega)$, and they  fit  with the result  of  \cite{LM}, i.e. $u\in H_0^1(\Omega)$ for all $\gamma<3$ if one  formally takes $m=+\infty$.

In \cite{GMM4, GMM2,GMM,GMM1,GMM3} the authors introduce, in the case of strong singularities (which in the model case corresponds to $\gamma>1$), a new definition of the solution, with a space for the solution which is unconventional, and test functions which are reminiscent of  the notion of solution defined by transposition. In this framework they prove results of existence, stability and uniqueness. They also prove \cite{GMM2,GMM1} results of homogenization in this framework. Another step in this direction is \cite{CD-M} where the right-hand side of the equation can change sign. Other homogenization results can be found in \cite{BC} and \cite{D-G}.

Let us also  point out that the cases $\dys h(x,s)= \frac{f(x)}{s^\gamma}+\mu$ and $h(x,s)=\mu \Tilde{h}(s)$ with $\mu$ a nonnegative Radon measure have been studied in \cite{OP1}. Moreover, the case of a variable exponent , i.e. $\dys h(x,s)=\frac{f(x)}{s^{\gamma(x)}}$ is considered in \cite{CMA}. For more details  and a gentle introduction to singular elliptic problems we refer to the recent survey  \cite{op}.

\bigskip 

Finally, the case of singularities in first order terms with natural growth in the gradient has been also extensively studied. For a far to be complete account on these problems see \cite{bo1,pe,a6, b4,bop}, and references therein. 
 
 \medskip 
\noindent{\bf Originality of problem \eqref{pb1i}}

  Let us emphasize some  special features which are peculiar of problem \eqref{pb1i}. Both in the singular case (i.e. $\phi(0)=+\infty$) and in  the non-singular case (i.e. $ \phi\in C^0(\R)$), for any (possible) weak solution $u$ of \eqref{pb1i} it can be proven  that  $\int_0^L \phi(u)\frac{du}{dx}\,dx=0$, since one has \begin{equation*}
  \mbox{if } z\in H_0^1(0,L) \mbox{ with } \phi(z)\in L^2(0,L), \mbox{ then }\dys \int_0^L \phi(z)\frac{dz}{dx}\,dx=0,\end{equation*} 
(see Lemma \ref{prop24ter} below, where the hypothesis $\phi(z)\in L^2(0,L)$ is essential). This implies for the weak solutions of \eqref{pb1i} defined above by \eqref{form-defi} the following important a priori estimate
\begin{equation}
\label{propduii}
\left\|\frac{du}{dx}\right\|_{L^2(0,L)}\leq \frac{1}{\alpha} \|g\|_{L^2(0,L)},\end{equation}  
and, by Morrey's embedding   (here $N=1$ is crucial)
\begin{equation}\label{propdu1i}\left\|u\right\|_{L^{\infty}(0,L)}\leq \frac{\sqrt{L}}{\alpha} \|g\|_{L^2(0,L)}\,.
\end{equation}

\bigskip 
 
In order to  (try to) prove the existence of a weak solution of problem \eqref{pb1i}, we proceed as usual by approximation.

 We consider sequences  of  $a_n$, $g_n$ and $\phi_n$ which satisfy 
\begin{equation}
\label{sequencei}
\begin{cases}
a_n\in L^\infty(0,L),\,\, \alpha\leq a_n\leq \beta \mbox{ for some fixed constant } \beta,\\ a_n(x)\to a(x) \mbox{ a.e. } x\in(0,L),
\end{cases}
\end{equation}
\begin{equation}
\label{propgn}
g_n\in L^2(0,L),\,\, g_n\to g \,\, \mbox{weakly in} \,\, L^2(0,L),
\end{equation}
\begin{equation}
\label{phinconvi}
\phi_n\in C^0(\R) \,\,\mbox{ for every } \,\, n\in\mathbb{N}\ \  \text{which satisfy}
\end{equation}
\begin{equation}
\label{phini}
\mbox{if } \,\, s_n\to s\,\, \mbox{in}\,\, \R\,\mbox{ then } \phi_n(s_n)\to\phi(s) \mbox{ in } \R\cup\{+\infty\}; \end{equation}

\noindent  the latest property \eqref{phini} is equivalent to say that the sequence $\phi_n$ (locally) uniformly converges to $\phi$, even in the case in which $\phi(0)=+\infty$ (see Proposition~\ref{rem29} below).

Examples of such approximations of $\phi$
are the truncations (i.e. $
\phi_n(s)=T_n(\phi(s))$) and the  homographic approximations (i.e. $\dys\phi_n(s)=\frac{\phi(s)}{1+\frac 1n \phi(s)}).$

For $\phi_n$ satisfying \eqref{phinconvi}, \eqref{phini} it is quite easy to show  that, for every $n\in\mathbb{N}$,  there exists at least one function $u_n$ which satisfies 
\begin{equation}
\label{intseqi}
\begin{cases}
u_n\in \huzu,\\
\dys \int_0^L a_n(x) \frac{d u_n}{dx} \frac{dz}{dx}\,dx=\int_0^L \phi_n(u_n)\frac{dz}{dx}\,dx +\int_0^L g_n \frac{dz}{dx}\, dx \quad \forall z\in\huzu.
\end{cases}
\end{equation}

Therefore, due to \eqref{propduii}, there exists $u\in\huzu$ such that, for a subsequence
\begin{equation}
\label{convweaki}
u_n\rightharpoonup u \,\,\mbox{ weakly in } \,\, \huzu \,\,\mbox{ and a.e. }\,\, x\in(0,L). 
\end{equation}

 \medskip 
\noindent{\bf An alternative} 

 Our first main result (see Theorem~\ref{maintheo} below) consists in the following   alternative   for the weak limit $u$:  if $\phi(0)=+\infty$,  then

\begin{itemize}
\item either $u\equiv 0$,
\item or $u$ is a weak solution in the sense of Definition~\ref{defini} of problem \eqref{pb1i}. 
\end{itemize}

In the   proof of this alternative we show an original new estimate on the sequence $\phi_n(u_n)$ in $L^2(0,L).$ This estimate, which is specific to the one-dimensional setting, can only be obtained when the sequence $u_n$ weakly converges to a function $u$ which is not identically zero.

Note that the result is a true alternative, in the sense that the two possible situations  are mutually exclusive since $0$ is not a weak solution of \eqref{pb1i} in the sense of Definition~\ref{defini} when $\phi(0)=+\infty$.

\bigskip 
Moreover the two cases of the alternative effectively happen depending on the data $g$ and $\phi$. Indeed on the first hand there is a large class of functions $g\in L^2(0,L)$ (see Theorem \ref{36} below), and a large class of functions $\phi$ which satisfy \eqref{condphi1i} and \eqref{p3i} (see Theorem \ref{45} below), such that the limit of the sequence $u_n$ is always $u\equiv0$. On the other hand there exists (see Section \ref{Sec8} below) a large class of data $g$ and $\phi$ such that \eqref{pb1i} has a weak solution, and for every weak solution, there exists (see Section \ref{Sec7} below) (many) sequences of approximations of the data which converges to this weak solution.

Let us emphasize that hypothesis $\phi(0)=+\infty$ is essential in order to get the alternative. When  $\phi(0)<+\infty$, then for any sequence $\phi_n$ of reasonable approximations of $\phi$, one has $\phi_n(u_n)$ bounded in $L^\infty(0,L)$ and $\phi_n(u_n)$ converges strongly in $L^2(0,L)$ to $\phi(u)$. Moreover, in this setting,  the case $u\equiv 0$ occurs if $g\equiv c\in \R$.

\bigskip 
The results of Theorem \ref{36} and Theorem \ref{45} proved in Section \ref{Sec4} are in fact results of non-existence of weak solutions of problem \eqref{pb1i} in the sense of Definition~\ref{defini}.

The first one,   Theorem \ref{36}, states that when the datum $ g$ is bounded from below, there exists no weak solution of problem \eqref{pb1i}  in the sense of Definition~\ref{defini}.
This is essentially due to the fact that in this case  the datum $g$ cannot compensate the singular behaviour of the function $\phi(u(x))$ when  $u(x)$ tends to zero. 

The second one,   Theorem \ref{45}, states that when the function $\phi(s)$ is not integrable around $s=0$, there exists no weak solution of problem \eqref{pb1i} in the sense of Definition~\ref{defini}.
This is proved by showing that the strong behaviour of the singular function $\phi(s)$ near $s=0$ implies that  the class of functions $ u\in \huzu$, with  $\phi(u)\in L^2(0,L)$ is empty. 

 \medskip 
\noindent{\bf An associated ODE}

  If we look at \eqref{equivi}-\eqref{oce},   which is an equivalent formulation of the definition of weak solution of problem \eqref{pb1i} in the sense of Definition \ref{defini}, we are naturally induced to consider, for $h\in L^2(0,L)$, the ODE 

\begin{equation}\label{formhi}
\begin{cases}
v\in H^1(0,L), \quad \phi(v)\in L^2(0,L), \\
\dys a (x)\frac{d v}{dx}   =  \phi (v) + h ,\,\, \text{ in}\;[0,L],\\
\dys v(0)=0 ,& 
\end{cases}
\end{equation}
which is singular when $\phi(0)=+\infty$.

This problem is  clearly  related to our problem \eqref{pb1i}. Indeed, if $h=g+c$, any solution $u$ of this ODE is a weak solution of \eqref{pb1i} when $c$ is given by \eqref{oce}, and conversely.

In Section \ref{Sec5} we study the ODE \eqref{formhi}; this is original when  $\phi(0)=+\infty$. We obtain existence, positivity, comparison,  and uniqueness results for the solution $v$ of \eqref{formhi}, under further assumptions on $\phi(s)$ and $g$; the assumptions on $\phi$ are satisfied in the model case $\phi_\gamma (s)=\frac{c}{|s|^{\gamma}} +\varphi (s)$ when $c>0$, $0<\gamma<1$,  and $\varphi\in C^0_b(\R)$ with $\phi_\gamma (s)$ non-increasing for $s>0$.

In particular, in order to get existence of solutions $v$ to \eqref{formhi}, we use the integrability of $\phi(s)$ in $s=0$ and its  boundedness at infinity (see Theorem \ref{shoot} below), while for the comparison and uniqueness results (see Proposition \ref{compar} below) we further require that $\phi(s)$ is monotone non-increasing for $s>0$.

A synthesis of the results we proved for \eqref{formhi} is given in Subsection~\ref{sub53}. These results are new and, in our opinion, of  independent interest, because of the singular behaviour of the function $\phi(s)$ in  $s=0.$
 
\indent These results will be  strongly used when we will prove the  existence of weak solutions of problem \eqref{pb1i} (see Section \ref{Sec8} below) as well as the multiplicity result we describe now.

 \medskip 
\noindent{\bf A multiplicity result for the solutions of \eqref{pb1i}}

 Another interesting consequence of these results is a multiplicity result for the weak solutions of problem \eqref{pb1i}  in the sense of Definition~\ref{defini} (see Section~\ref{Sec6} below). This  multiplicity result stated in Theorem \ref{cara} below is quite unexpected: it says that, whenever a solution of problem \eqref{pb1i} in the sense of Definition~\ref{defini} exists for some given data ($a$, $g$, $\phi$), then infinitely many solutions exist;  these solutions can be  indexed by a real  parameter $c$ which varies in an interval $(-\infty, c^*]$ where $c^*$ is finite.  
These infinitely many solutions are strictly ordered with respect to $c$ and any possible solution of \eqref{pb1i} for these data correspond to some $c\in (-\infty , c^*]$.

 \medskip 
\noindent{\bf Stability and instability of the approximations}

  We also show, in Section~\ref{Sec7} below, two quite remarkable results concerning the stability of the weak solutions of \eqref{pb1i} for the data ($a$, $g_n$, $\phi_n$) when $g_n$ converges to $g$ in $L^2(0,L)$ and when $\phi_n$ is a sequence of reasonable approximations of $\phi$. We prove in particular  that for any weak solution $u$ of problem \eqref{pb1i} in the sense of Definition~\ref{defini} such approximations ($a$, $g_n$, $\phi_n$) can be  built whose solutions $u_n$ converge to $u$. Roughly speaking this result asserts that a weak solution of problem  \eqref{pb1i}   in the sense of Definition \ref{defini} is never isolated.  As a counterpart of this result, we also show that for any weak solution $u$ of problem \eqref{pb1i} in the sense of Definition \ref{defini} one can build such approximations ($a$, $g_n$, $\phi_n$) for which the  solutions $u_n$ of the approximating problem  converge to $0$.  This  can be viewed as  a strong instability result.   
Let us stress that in both results, the   approximating   sequence $\phi_n$ can be any reasonable approximation of $\phi$, while  the sequence $g_n$ has to be chosen   accordingly.

 \medskip 
\noindent{\bf Existence of solutions of problem \eqref{pb1i}} 

Finally,  Section \ref{Sec8} below is devoted to produce explicit large classes of data for which solutions to \eqref{pb1i} in the sense of Definition \ref{defini} do exist.

These results are essentially consequences of the fact that given any $\phi$ satisfying \eqref{condphi1i} and \eqref{p3i} as well as the fact that $\phi$ is integrable in $0$, one can construct a large class of functions $u$ such that  $ u\in \huzu$ with  $\phi(u)\in L^2(0,L)$. Defining  $g\in  L^{2}(0,L)$ by $g(x)=a (x)\frac{d u}{dx} -\phi (u)$, one has built data such that problem \eqref{pb1i}  admits $u$ as a weak solution.

A similar result, Theorem~\ref{t85},  is in some sense a ``density result'', since it asserts that for any $\phi$ satisfying \eqref{condphi1i} and \eqref{p3i} as well as  the fact that $\phi$ is integrable in $0$, for any  $g\in L^{2}(0,L)$ and for any $\delta>0$, one can construct a function $\hat{g}\in L^2(0,L)$ such that $\hat{g}\equiv g$ on $(0,L-\delta)$ such that the problem \eqref{pb1i} has a weak solution in the sense of Definition~\ref{defini} for the data  ($a$, $\hat{g}$, $\phi$).

Let us note that most of the results that we obtain and prove in this article, even if simple and obtained through elementary proofs, are new and even original.

In any case, they are unexpected.
 
   \medskip 
\noindent{\bf Concluding  remarks and comments}  

To our great regrets, our results are confined to the one-dimensional setting, since, as said before, in this case we are able to find an original new estimate on  $\phi_n(u_n)$ in $L^2(0,L)$, and since this is no more the case in the $N$-dimensional setting for $N>1$. The only situations that we were able to face are special forms of equations which can be solved by separation of variables, and the case of radial solutions under special (radial) assumptions on the data. We will  publish  (\cite{GMMP1}) these partial results   along with some  variants of  the results which are the subject of the present article, namely
\begin{itemize}
\item the case   where a zero-th order term $+b(x)u$ with $b(x)\geq 0$ (or more generally $+b(x,u)$ with $b(x,s)s\geq 0$) is added to the left hand side of problem \eqref{pb1i};
\item the case  where the linear operator 
$-\frac{d}{dx}\left(a(x) \frac{d u}{dx}\right)  
$
is replaced by a nonlinear monotone operator $-\frac{d}{dx}\left(a(x, \frac{d u}{dx})\right)$ (or possibly by a nonlinear pseudo monotone operator $-\frac{d}{dx}\left(a(x,u,\frac{d u}{dx})\right)$) where  $a: (x,\xi)\in (0,L)\times \mathbb{R}\rightarrow a(x,\xi)\in \mathbb{R}$ is a   Carath\'eodory function that satisfies, for some $p$, $\alpha$, $\beta$, $b(x)$ with $1<p<+\infty$, $\alpha>0$, $\beta\geq\alpha$, $b\in L^p (0,L)$, the classical monotonicity properties 
\begin{equation*}
\begin{cases}
\dys \alpha |\xi|^p\leq a(x,\xi)\xi,\quad |a(x,\xi)|\leq \beta (|\xi|^p+|b(x)|^p),\\
(a(x,\xi)-a(x,\eta))(\xi-\eta)>0, \quad \mbox{a.e. } x\in(0,L),\,\, \forall \xi\in\mathbb{R}, \,\, \forall \eta\in\mathbb{R},\,\, \xi\not=\eta.
\end{cases}
\end{equation*}
\end{itemize} 
\noindent In a second   forthcoming paper  (\cite{GMMP2}) we will treat the problem where the model singular function  $\phi_\gamma (s)=\frac{1}{|s|^{\gamma}}$ is replaced by  a function that is singular in $s=m$ (for $m\neq 0$)  whose model is 
$$
\phi^m_\gamma (s)=\frac{1}{|s-m|^{\gamma}}, \ \ \text{with}\ \ \gamma>0.
$$

\bigskip

{\bf Notation}

 In the present paper we will use classical notations. Here we just recall and precise some of them.

We denote by $C^0(\re)$ the space of functions $\phi:\re\rightarrow\re$ which are continuous at each point of $\re$. Observe that a function $\phi:\re\rightarrow\re\cup\{+\infty\}$ which is continuous (see an example in \eqref{pM} below) does not belong to $C^0(\re)$ when $\phi(s_0)=+\infty$ for some $s_0\in\re$.

We denote by $C^0_b(\re)$ the space of functions of $C^0(\re)$ which are bounded on $\re$, namely $C^0_b(\re)=C^0(\re)\cap L^\infty(\re)$.

We denote by $\mathcal{D}'(0,L)$ the space of distributions on the open interval $(0,L)$, namely the dual of the space $C_c^\infty(0,L)$ of functions which have derivatives of any order and which have a compact support in $(0,L)$.

We denote by $Lip(\re)$ the space of the lipschitz continuous functions on $\re$, namely the space of function $\psi\in C^0(\re)$ such that 
$$
\|\psi\|_{Lip(\re)}=\sup_{s,t\in\re,\,s\not=t} \frac{|\psi(s)-\psi(t)|}{|s-t|}<+\infty.
$$

We denote by $H^1_0 (0,L)$ the  Sobolev space of those  functions $z\in L^2(0,L)$ whose distributional first  derivative $\displaystyle \frac{dz}{dx}$ belongs to $L^2(0,L)$ and which satisfy $z(0)=z(L)=0$. The space $H^1_0 (0,L)$ will be  equipped with the norm
\begin{equation}\label{21b}\|z\|_{H^1_0 (0,L)}=\left\| \frac{d z}{dx}\right\|_{L^2 (0,L)}, \ \ \forall \ z\in H^1_0 (0,L),\end{equation}
since the Poincar\'e inequality asserts that 
\begin{equation}
\label{Poincare}
\|z\|_{L^2(0,L)}\leq {L}\left\|\frac{dz}{dx}\right\|_{L^2(0,L)}, \quad \forall z\in \huzu.
\end{equation}

\noindent Recall that the  Morrey's embedding and estimate, which are specific to the one-dimensional case,  assert  that
\begin{equation}
\label{rem210}
 \huzu\subset L^{\infty} (0,L) \quad \mbox{ with } \quad
\dys  \|z\|_{L^\infty(0,L)}\leq \sqrt{L} \left\|\frac{dz}{dx}\right\|_{L^2(0,L)}, \,\, \forall z\in\huzu,
\end{equation}
and also that
\begin{equation}
\begin{cases} \label{Morrey}
H^1(0,L)\subset C^{0,\frac 12} ([0,L]), \,\, \mbox{ with }\\ 
\displaystyle  \|z\|_{C^{0,\frac12} ([0,L])}=\sup_{\underset{x\neq y}{x,y\in [0,L]}}\frac{|z(x)-z(y)|}{\sqrt{|x-y|}}\leq \left\|\frac{dz}{dx}\right\|_{L^{2}(0,L)},\quad \forall z\in H^1(0,L)\,.
\end{cases}
 \end{equation}

Finally, for $k\in\re^+$, let us  denote by $T_k:\re\rightarrow\re$ the truncation function  at height $k$, i.e.  the function given by
\begin{equation}
\label{tk}
T_k(s)=
\begin{cases}
\dys s & \mbox{if} \,\, |s|\leq k,\\
\dys k\frac{s}{|s|} &\mbox{if} \,\, |s|\geq k,
\end{cases}
\quad \forall s\in \re.
\end{equation}

\section{Assumptions and definitions}\label{sec2}
As mentioned in the Introduction, in this paper  we will study a one-dimensional singular problem that we formally  write as
\begin{equation}
\label{pb1}
\begin{cases}
\dys -\frac{d}{dx}\left(a(x) \frac{d u}{dx}\right)  = - \frac{d \phi (u) }{dx}- \frac{d g(x)  }{dx}& \text{in}\;(0,L),\\
u(0)=u(L)=0\,, & 
\end{cases}
\end{equation}
where $u$ is the unknown and where ($a$, $g$, $\phi$) are data which will be specified below (see Subsection~\ref{assumptions}).

One of the main difficulties of the problem is to give a correct mathematical meaning to problem \eqref{pb1} (see Subsection~\ref{definitionws} below).

\subsection{Assumptions}\label{assumptions}

We will always  assume that 
\begin{equation}
\label{cond1}
N=1 \,\,\,\,\mbox{ and }\,\,\,\, L>0.
\end{equation}

As far as the data $(a, g, \phi)$ are concerned, we will assume that they satisfy 
\begin{equation}\label{a1}
a\in L^{\infty}(0,L),\,\, \exists \,\, \alpha,\beta,\,\, 0<\alpha<\beta, \ \alpha\leq a(x)\leq  \beta\, \,\, \text{ a.e.}\,\, x\in(0,L),
\end{equation}
\begin{equation}\label{g1}
g\in L^2(0,L), 
\end{equation}
\begin{equation}
\label{condphi1}
\begin{cases}
\phi:\re\mapsto\re\cup\{+\infty\},
\,\,\phi\,\, \mbox{ is continuous with values in } \re\cup\{+\infty\},\\
 \phi(s)<+\infty, \ \ \forall s\in\re,\,\, \ s\neq 0\,.
\end{cases}
\end{equation}

We are mainly interested in the case where $\phi$ is singular at $s=0$, namely when
\begin{equation} \label{p3}\phi(0)=+\infty\,,\ \ \ \end{equation}
and this will represent the originality and the difficulty of the problem.

Note however that we will also consider functions $\phi$ which do not satisfy \eqref{p3}, for instance when approximating a singular function $\phi$ which satisfies \eqref{condphi1} and \eqref{p3} by a sequence of functions $\phi_n$ which belong to $C^0(\re)$, and therefore satisfy \eqref{condphi1} but not \eqref{p3}.
\bigskip

\begin{remark}
\label{rem21}
When $\phi$ satisfies \eqref{condphi1}--\eqref{p3}, one has 
\begin{equation}
\label{rem0inf}
\phi(s)\to +\infty \quad \mbox{ as } \quad s\to 0,
\end{equation}
which implies that
\begin{equation}
\label{rem0inf2}
\phi(s)\geq 0 \quad \mbox{ for } s \mbox{ sufficiently small}.
\end{equation}

Note also that when $\phi$ satisfies \eqref{condphi1}, then $\phi$ satisfies 
\begin{equation}
\label{rem0inf3}
\phi\in C^0_b([-R,-\delta])\cap C^0_b([+\delta,+R]),\quad
\forall (\delta,R),\,\, 0<\delta<R<+\infty.
\end{equation}

Finally note that \eqref{condphi1} does not impose any behaviour of $\phi$ as $s$ tends to $+\infty$\break and $-\infty$.

\qed
\end{remark}

\begin{remark}
The {\bf model case} for the function $\phi$ is the case of the function $\phi_\gamma$ given by
\begin{equation}
\label{pM}
\dys\phi_\gamma (s)=\frac{c}{|s|^{\gamma}} +\varphi (s),
\mbox{ with } c>0, \,\gamma>0,\, \varphi\in C^0(\re),
\end{equation}
which satisfies \eqref{condphi1}-\eqref{p3} for every $\gamma>0$. 

\qed
\end{remark}

\begin{remark}\label{rm-infty}
Assuming $ \phi(0)=-\infty$ in place of $ \phi(0)=+\infty$ in  \eqref{p3} is just a variant of problem \eqref{pb1}: indeed the problem 
\begin{equation} 
\begin{cases}
\dys -\frac{d}{dx}\left(a(x) \frac{d \hat{u}}{dx}\right)  = - \frac{d \hat{\phi} (\hat{u})}{dx} - \frac{d  \hat{g}}{dx} & \text{in}\;(0,L),\\
\hat{u}(0)=\hat{u}(L)=0\,, & 
\end{cases}
\end{equation}
with 
\begin{equation}
\label{phihat}
\begin{cases}
\hat{\phi}:\re\rightarrow \re\cup\{-\infty\},\,\,\hat{\phi} \mbox{ is continuous with values in } \re\cup\{-\infty\},\\
\hat{\phi}(s)>-\infty,\,\, \forall s\in \re,\,s\not=0,
\end{cases}
\end{equation}
reduces to problem \eqref{pb1} with assumptions \eqref{condphi1}-\eqref{p3}  on $\phi$ by setting 
\begin{equation}
\label{datapb2}
u=-\hat{u}, \quad g=-\hat{g},\quad \phi(s)=-\hat{\phi}(-s), \quad \forall s\in\re. 
\end{equation}\qed
\end{remark}

\begin{remark}\label{importante1}

Observe that when $u$ is a solution of  \eqref{pb1}, then $u$ is still a solution of \eqref{pb1} when  $\phi$ is changed in $\phi +c_1$ and $g$ in $g+c_2$, where $c_1$ and $c_2$ are arbitrary constants. 

To avoid confusion, we then  emphasize the fact that in the whole of the present paper,   {\it the data $g$ and $\phi$ are  fixed once and for all; in other terms the data $g$ and $\phi$ are  not defined up to additive constants, but fixed}.

 \qed
\end{remark}
 
\begin{remark}\label{importante2}
When the data $(a, g, \phi)$ are given and when they satisfy the hypotheses \eqref{a1}-\eqref{condphi1} above, we claim that {\it it is always possible to assume without any loss of generality that $\phi$ also enjoys the following property} 
\begin{equation}\label{pp2}
\phi\in C^0_b(\re\backslash(-\delta, +\delta)),  \ \   \forall \ \delta>0\footnote{or, equivalently, 
$
\phi\in C^0_b(\re\backslash(-\delta, +\delta))  \,\, \mbox{ for some given } \,\, \delta>0
$.},
\end{equation}

\noindent (compare with \eqref{rem0inf3}), if one considers weak solutions of problem \eqref{pb1} in the sense of Definition~\ref{defin} below for the data $(a,g,\phi)$ and if $g$ satisfies for some $M>0$
\begin{equation}
\label{estimgvis}
\|g\|_{L^2(0,L)}\leq \frac{\alpha}{\sqrt{L}}M. 
\end{equation}
Note  that {\it condition} \eqref{estimgvis} {\it is not a restriction on} $g$, since $M$ is an arbitrary constant; on the contrary, this condition allows $g$ to vary into a given ball of $L^2(0,L)$ in the proof below.

Let us prove this claim.

 Consider data $(a,g,\phi)$ which satisfy \eqref{a1}-\eqref{condphi1} and \eqref{estimgvis}, and let $u$ be any weak solution of problem \eqref{pb1} in the sense of Definition~\ref{defin} below for these data  $(a,g,\phi)$, i.e. a function $u$ which satisfies 
\begin{equation}
\label{pbrem}
\begin{cases}\vspace{0.1cm}
\dys u\in \huzu, \,\phi(u)\in L^2(0,L),  \\ 
\dys -\frac{d}{dx}\left(a(x) \frac{d u}{dx}\right)  =\dys - \frac{d \phi (u)}{dx}- \frac{d g(x) }{dx} \ \  \text{in} \ \ \mathcal{D}'(0,L).
\end{cases}
\end{equation}
Then, in view of \eqref{propduibis} (see Proposition~\ref{28bis} below) and of condition \eqref{estimgvis} on $g$ and $M$, the function $u$ satisfies
\begin{equation}
\label{propduiter}
\left\|u\right\|_{L^{\infty}(0,L)}\leq \frac{\sqrt{L}}{\alpha} \|g\|_{L^2(0,L)}\leq M.
\end{equation}

Define the function $\hat{\phi}_M:\re\to\re\cup\{+\infty\}$ by the formula
\begin{equation}
\label{formulaphiR}
\hat{\phi}_M(s)=\begin{cases}
\dys \phi(-M) & \mbox{if} \,\, s\leq -M,\\
\dys \phi(s) &\mbox{if} \,\, -M\leq s \leq +M,\\
\phi(+M) & \mbox{if} \,\, s\geq +M.\\
\end{cases}
\end{equation}

Then the function $\hat{\phi}_M$ satisfies both \eqref{condphi1} and \eqref{pp2}.

On the other hand, in view of \eqref{propduiter} one has
\begin{equation}
\label{phicap}
\dys\hat{\phi}_M(u)=\phi(u) \in L^2(0,L) \,\,\,\, \mbox{ and} \,\,\,\,
\dys \frac{d \hat{\phi}_M(u)}{dx}=\frac{d \phi(u)}{dx}  \,\, \mbox{ in } \,\,\mathcal{D}'(0,L). 
\end{equation}

When $g$ satisfies both \eqref{g1} and \eqref{estimgvis}, this implies that, any weak solution $u$ of problem \eqref{pb1} in the sense of Definition~\ref{defin} below for the data $(a,g,\phi)$ also satisfies  
\begin{equation}
\label{pbremcap}
\begin{cases}\vspace{0.1cm}
\dys u\in \huzu, \hat{\phi}_M(u)\in L^2(0,L),  \\ 
\dys -\frac{d}{dx}\left(a(x) \frac{d u}{dx}\right)  =\dys - \frac{d \hat{\phi}_M(u)}{dx}- \frac{d g(x)}{dx}  \ \  \text{in} \ \ \mathcal{D}'(0,L),
\end{cases}
\end{equation}
or, in other terms, $u$ is a weak solution of problem \eqref{pb1} in the sense of Definition~\ref{defin} below for the data $(a,g,\hat{\phi}_M)$.

Conversely, when $g$ satisfies both  \eqref{g1} and \eqref{estimgvis}, any weak solution $u$ of problem \eqref{pb1} in the sense of Definition~\ref{defin} below for the data $(a,g,\hat{\phi}_M)$, or in other terms, any solution $u$ of \eqref{pbremcap}, is also a solution of \eqref{pbrem}, or in other terms a weak solution of problem \eqref{pb1}  in the sense of Definition~\ref{defin} below for the data $(a,g,\phi)$.

In brief, when $g$ satisfies both \eqref{g1} and \eqref{estimgvis} for some $M>0$, one can always replace the function $\phi$, which only satisfy \eqref{condphi1}, by the function $\hat{\phi}_M$, which satisfies both \eqref{condphi1}  and \eqref{pp2}, if one considers weak solutions of problem \eqref{pb1}  in the sense of Definition~\ref{defin} below. This proves the claim. 

 \qed
\end{remark}

\subsection{Definition of a weak solution of problem \eqref{pb1}}
 \label{definitionws}

 We introduce the following notion of solution:
\begin{defin}
\label{defin}
 Assume that \eqref{cond1} holds true, and that  the data $(a,g,\phi)$ satisfy hypotheses \eqref{a1}-\eqref{condphi1}. We will say that $u$ is a {\it weak solution} of problem \eqref{pb1} if $u$ satisfies 
\begin{equation}\label{form-def}
\begin{cases}\vspace{0.1cm}
\dys u\in \huzu, \phi(u)\in L^2(0,L),  \\ 
\dys -\frac{d}{dx}\left(a(x) \frac{d u}{dx}\right)  =\dys - \frac{d  \phi (u)}{dx}- \frac{d g(x) }{dx} \ \  \text{in} \ \ \mathcal{D}'(0,L).
\end{cases}
\end{equation} \qed
\end{defin}

\begin{remark}
The above Definition~\ref{defin} will be justified by the result presented below in Theorem~\ref{maintheo} (Alternative).

Let us emphasize that there are cases where problem \eqref{pb1} does not have any solution in the sense of Definition~\ref{defin}.

This is for example the case if the nonlinearity $\phi$ satisfies $\phi(0)=+\infty$ (hypothesis \eqref{p3}) and if the source term $g$ is bounded from below (see Theorem~\ref{36} below).

This is also the case if $g$ is arbitrary in $L^2(0,L)$ and if
$$
\int_0^{+\delta} \phi(t) \,dt =+\infty \,\, \mbox{ and }\,\,
\int_{-\delta}^0 \phi(t)\, dt  =+\infty
$$
(see Theorem~\ref{45} below).

For this reason we will state most of the result of the present paper assuming that there exists at least a solution of problem \eqref{pb1} in the sense of Definition~\ref{defin}. We will face the problem of existence of such a solution in Section~\ref{Sec8}.

\qed
\end{remark}

\begin{remark}\label{2.3}
Problem \eqref{form-def} has a precise mathematical meaning, in contrast with problem \eqref{pb1} which has no mathematical meaning, since in \eqref{pb1} the spaces to which $u$ and $\phi(u)$ have to belong to are not specified, and since the mathematical meanings of the two equations in \eqref{pb1} are not specified neither. 

\qed
\end{remark}

\begin{remark}
Since the first line in \eqref{form-def} asserts that $u\in H_0^1(\Omega)$ and $\phi(u)\in L^2(0,L)$ while $a$ and $g$ satisfy \eqref{a1}-\eqref{g1}, the second line of  \eqref{form-def} is equivalent to the {\it variational formulation}
\begin{equation}
\label{vf}
\int_0^L a(x)\frac{du}{dx}\frac{dz}{dx}= \int_0^L \phi(u)\frac{dz}{dx}+\int_0^L g(x)\frac{dz}{dx},\quad \forall z\in H_0^1(\Omega).
\end{equation}\qed
\end{remark}
\begin{proposition}
\label{28bis}
Assume that \eqref{cond1} holds true, and that the data $(a,g,\phi)$ satisfy\break  \eqref{a1}-\eqref{condphi1}. Then every possible weak solution $u$ of problem \eqref{pb1} in the sense of\break Definition~\ref{defin} satisfies 
\begin{equation}
\label{vf2bis}
\dys\int_0^L a(x)\frac{du}{dx}\frac{du}{dx}\, dx= \int_0^L g(x)\frac{du}{dx}\,dx.
\end{equation}

This energy equality in particular implies that

\begin{equation}
\label{propdubis}
\left\|\frac{du}{dx}\right\|_{L^2(0,L)}\leq \frac{1}{\alpha}\|g\|_{L^2(0,L)},
\end{equation}
which in turn implies that
\begin{equation}
\label{propduibis}
\left\|u\right\|_{L^{\infty}(0,L)}\leq \frac{\sqrt{L}}{\alpha} \|g\|_{L^2(0,L)}.
\end{equation}
\end{proposition}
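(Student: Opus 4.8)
The plan is to establish the energy equality \eqref{vf2bis} by inserting the admissible test function $z=u$ into the variational formulation \eqref{vf}, to show that the resulting singular term cancels, and then to read off the two norm bounds by elementary estimates.

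First I would recall that, by the remark preceding the statement, any weak solution $u$ in the sense of Definition~\ref{defin} satisfies the variational identity \eqref{vf} for every $z\in\huzu$. Since $u$ itself belongs to $\huzu$, it is a legitimate test function, and the product $\phi(u)\frac{du}{dx}$ lies in $L^1(0,L)$ because $\phi(u)\in L^2(0,L)$ by the definition of weak solution while $\frac{du}{dx}\in L^2(0,L)$, so H\"older's inequality applies. Substituting $z=u$ into \eqref{vf} therefore yields
\begin{equation*}
\int_0^L a(x)\frac{du}{dx}\frac{du}{dx}\,dx = \int_0^L \phi(u)\frac{du}{dx}\,dx + \int_0^L g(x)\frac{du}{dx}\,dx.
\end{equation*}

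The crucial, and only delicate, step is to show that the singular contribution vanishes, namely $\int_0^L \phi(u)\frac{du}{dx}\,dx=0$; this is precisely the content of Lemma~\ref{prop24ter}, whose hypotheses ($u\in\huzu$ with $\phi(u)\in L^2(0,L)$) are met here. For completeness let me indicate why it holds. Since $\phi(u)\in L^2(0,L)$ is finite a.e.\ whereas $\phi(0)=+\infty$ under \eqref{p3}, the level set $\{u=0\}$ must be Lebesgue-null. For $\varepsilon>0$ the primitive $G_\varepsilon(s)=\int_0^s\phi(t)\,\chi_{\{|t|>\varepsilon\}}\,dt$ is locally Lipschitz, because $\phi$ is continuous and finite away from $0$ by \eqref{condphi1}; hence by the chain rule $G_\varepsilon(u)\in H^1(0,L)$ with $\frac{d}{dx}G_\varepsilon(u)=\phi(u)\,\chi_{\{|u|>\varepsilon\}}\frac{du}{dx}$. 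Integrating over $(0,L)$ and using $u(0)=u(L)=0$ together with $G_\varepsilon(0)=0$ gives $\int_0^L\phi(u)\,\chi_{\{|u|>\varepsilon\}}\frac{du}{dx}\,dx=0$; letting $\varepsilon\to0$ and dominating by $|\phi(u)\frac{du}{dx}|\in L^1(0,L)$, dominated convergence produces the claim. This establishes \eqref{vf2bis}.

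Finally I would deduce the two estimates. Using $a(x)\ge\alpha$ from \eqref{a1} on the left-hand side of \eqref{vf2bis} and the Cauchy--Schwarz inequality on the right-hand side gives
\begin{equation*}
\alpha\left\|\frac{du}{dx}\right\|_{L^2(0,L)}^2 \le \int_0^L a(x)\left(\frac{du}{dx}\right)^2 dx = \int_0^L g\,\frac{du}{dx}\,dx \le \|g\|_{L^2(0,L)}\left\|\frac{du}{dx}\right\|_{L^2(0,L)},
\end{equation*}
which, after dividing by $\left\|\frac{du}{dx}\right\|_{L^2(0,L)}$ (the conclusion being trivial if this norm vanishes), yields \eqref{propdubis}. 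The $L^\infty$ bound \eqref{propduibis} then follows at once by combining \eqref{propdubis} with Morrey's estimate \eqref{rem210}. The entire difficulty is thus concentrated in the cancellation of the singular term, for which the assumption $\phi(u)\in L^2(0,L)$ (and, through it, the nullity of $\{u=0\}$) is indispensable.
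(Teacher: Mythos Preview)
Your proof is correct and follows essentially the same route as the paper: test \eqref{vf} with $z=u$, invoke Lemma~\ref{prop24ter} to kill $\int_0^L\phi(u)\frac{du}{dx}\,dx$, then use coercivity, Cauchy--Schwarz, and Morrey's inequality \eqref{rem210}. The only difference lies in your justification of the cancellation lemma: the paper approximates by truncating the \emph{range} of $\phi$, setting $\psi_n(s)=\int_0^s T_n(\phi(t))\,dt$ with $T_n$ the truncation at height $n$, whereas you cut off the \emph{domain} near the singularity via $G_\varepsilon(s)=\int_0^s\phi(t)\chi_{\{|t|>\varepsilon\}}\,dt$. Both approximations are Lipschitz on the range of $u$ and both pass to the limit by dominated convergence with the same dominant $|\phi(u)\frac{du}{dx}|\in L^1$, so the two arguments are equivalent in substance. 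One small point: Proposition~\ref{28bis} assumes only \eqref{a1}--\eqref{condphi1}, not \eqref{p3}, so your remark that $\{u=0\}$ is null relies on a hypothesis not in force; this is harmless, since when $\phi(0)<+\infty$ the function $\phi$ lies in $C^0(\re)$ and the chain rule is classical (and in any case $\frac{du}{dx}=0$ a.e.\ on $\{u=0\}$), but it is worth noting.
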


\begin{remark}
\label{28ter}
Proposition~\ref{28bis} asserts that every possible solution of \eqref{form-def} satisfies the a priori estimates \eqref{propdubis}-\eqref{propduibis}, i.e. $\huzu$ and $L^\infty(0,L)$ bounds which depend only on $L$, on the coercivity constant $\alpha$ of $a$, and on $\|g\|_{L^2(0,L)}$, but not on the function $\phi$, which is only assumed to satisfy \eqref{condphi1}. 

\qed

The three results of Proposition~\ref{28bis} immediately follow  from \eqref{vf},  from the coercivity \eqref{a1},  from Morrey's embedding  \eqref{rem210} which is specific to the dimension one, and from Lemma~\ref{prop24ter} below. 
\end{remark}

\begin{lemma}
\label{prop24ter}
Assume that $\phi$ satisfies hypothesis \eqref{condphi1}, and let $z$ be such that  
\begin{equation}
\label{zh}
z\in \huzu \,\,\mbox{ with } \,\, \phi(z)\in L^2(0,L).
\end{equation}
Then one has:
\begin{equation}
\label{intphi}
\int_0^L \phi(z)\frac{dz}{dx}\, dx=0.
\end{equation}

\end{lemma}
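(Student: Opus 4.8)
The plan is to reduce \eqref{intphi} to the vanishing of a boundary term for a primitive of $\phi$, the only genuine difficulty being the singularity of $\phi$ at $s=0$. If $\phi$ were a bounded continuous function on the whole of $\re$, one would simply set $\Phi(s)=\int_0^s\phi(t)\,dt$, note that $\Phi$ is Lipschitz, apply the chain rule $(\Phi(z))'=\phi(z)\frac{dz}{dx}$ valid for $z\in\huzu$, and conclude
$$\int_0^L\phi(z)\frac{dz}{dx}\,dx=\int_0^L(\Phi(z))'\,dx=\Phi(z(L))-\Phi(z(0))=\Phi(0)-\Phi(0)=0,$$
since $z(0)=z(L)=0$. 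The obstruction is that under \eqref{condphi1} the value $\phi(0)$ may be $+\infty$, so that $\phi$ need not be integrable near $0$ and the naive primitive $\Phi$ is ill defined.

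To circumvent this I would excise a dead zone around $0$. Set $R=\|z\|_{L^\infty(0,L)}$, which is finite by \eqref{rem210}, and for $\delta>0$ define
$$\Phi_\delta(s)=\int_0^s\phi(t)\,\chi_{\{|t|>\delta\}}\,dt.$$
By \eqref{rem0inf3} the function $\phi$ is bounded on $\{\delta\le|t|\le R\}$, so $\Phi_\delta$ is Lipschitz on $[-R,R]$ with $\Phi_\delta'(s)=\phi(s)\chi_{\{|s|>\delta\}}$ for a.e.\ $s$, and $\Phi_\delta(0)=0$. The chain rule for a Lipschitz function composed with an $H^1$ function then yields $\Phi_\delta(z)\in\huzu$ (the homogeneous boundary values being preserved because $\Phi_\delta(0)=0$) together with $(\Phi_\delta(z))'=\phi(z)\,\chi_{\{|z|>\delta\}}\,\frac{dz}{dx}$ a.e.; this derivative lies in $L^2(0,L)$ since $\phi(z)\chi_{\{|z|>\delta\}}$ is bounded. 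Integrating over $(0,L)$ and using $z(0)=z(L)=0$ gives, for every $\delta>0$,
$$\int_0^L\phi(z)\,\chi_{\{|z|>\delta\}}\,\frac{dz}{dx}\,dx=\Phi_\delta(z(L))-\Phi_\delta(z(0))=0.$$

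It then remains to let $\delta\to0^+$. Since $\phi(z)\in L^2(0,L)$ by \eqref{zh} and $\frac{dz}{dx}\in L^2(0,L)$, the Cauchy--Schwarz inequality gives $\phi(z)\frac{dz}{dx}\in L^1(0,L)$, which supplies the dominating function; as $\chi_{\{|z|>\delta\}}\to\chi_{\{z\ne0\}}$ pointwise, dominated convergence yields $\int_{\{z\ne0\}}\phi(z)\frac{dz}{dx}\,dx=0$. Finally the contribution of the level set $\{z=0\}$ vanishes: if $\phi(0)=+\infty$ then, $\phi(z)$ being finite a.e.\ as an $L^2$ function, the set $\{z=0\}$ is necessarily negligible; whereas if $\phi(0)<+\infty$ one invokes the classical fact that $\frac{dz}{dx}=0$ a.e.\ on the level set $\{z=0\}$. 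In either case $\int_0^L\phi(z)\frac{dz}{dx}\,dx=\int_{\{z\ne0\}}\phi(z)\frac{dz}{dx}\,dx=0$, which is \eqref{intphi}. The points to watch are the justification of the chain rule together with the membership $\Phi_\delta(z)\in\huzu$, and the bookkeeping on $\{z=0\}$, where $\phi(z)\frac{dz}{dx}$ is an indeterminate ``$\infty\cdot0$'' that the $L^2$-hypothesis on $\phi(z)$ is precisely designed to tame.
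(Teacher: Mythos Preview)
Your proof is correct and follows the same overall scheme as the paper's --- regularize $\phi$, apply the chain rule to obtain a vanishing boundary term, then pass to the limit by dominated convergence using $\phi(z)\frac{dz}{dx}\in L^1(0,L)$. The only difference lies in the choice of regularization: the paper truncates the \emph{values} of $\phi$, setting $\phi_n=T_n(\phi)$ (so that $\phi_n\in C^0_b(\re)$ and the primitive $\psi_n(s)=\int_0^s T_n(\phi(t))\,dt$ is $C^1$), whereas you excise the \emph{argument} near the singularity via $\phi\,\chi_{\{|\cdot|>\delta\}}$. The paper's vertical truncation has the minor advantage that $T_n(\phi(z(x)))\to\phi(z(x))$ a.e.\ directly (since $\phi(z)\in L^2$ forces $\phi(z(x))$ finite a.e.), so no separate discussion of the level set $\{z=0\}$ is needed; your horizontal cut-off introduces jump discontinuities at $\pm\delta$ and the residual set $\{z=0\}$, which you correctly dispose of via the Lipschitz chain rule and the case split on whether $\phi(0)$ is finite. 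Both routes are standard and equally valid.
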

\begin{proof}[\bf Proof]
For $n>0$, let  $T_n:\re\to\re$ be the truncation at height $n$ defined by \eqref{tk},
and let $\psi_n:\re\to\re$ be the function defined by 
$$
\psi_n(s)=\int_0^s T_n(\phi(t))\,dt.
$$

\noindent Since $T_n(\phi)\in C_b^0(\re)$, one has $\psi_n\in C^0(\re)$ with $\dys \frac{d\psi_n}{ds}\in C_b^0(\re)$, so that
$$
\forall z\in\huzu,\,\, \psi_n(z)\in\huzu \,\, \mbox{ with }\,\, \frac{d\psi_n(z)}{dx}=(T_n(\phi(z)))\frac{dz}{dx},
$$
and therefore
\begin{equation}
\label{proppsi}
\int_0^L T_n(\phi(z))\frac{dz}{dx}\, dx=\int_0^L \frac{d\psi_n(z)}{dx}\, dx=\psi_n(z(L))- \psi_n(z(0))=0-0=0.
\end{equation}

Since $|T_n(\phi(z(x)))|\leq |\phi(z(x))|$ a.e. $x\in[0,L]$, and since by hypothesis \eqref{zh} $\phi(z)$ belongs to $L^2(0,L)$, while 
$$
T_n(\phi(z(x))) \to \phi(z(x))\,\, \mbox{ a.e.  }\,\, x\in(0,L)\,\, \mbox{ as } \,\,n\to+\infty, 
$$
passing to the limit in \eqref{proppsi} thanks to Lebesgue's dominated convergence theorem implies that
\begin{equation}
\label{intphi3}
\int_0^L \phi(z)\frac{dz}{dx}\, dx=0,
\end{equation}
which proves \eqref{intphi}.

\end{proof}

The proof of the following proposition is straightforward (the last line of \eqref{form-def4}\break below is just  dividing the second line by $a(x)$, integrating  on $(0,L)$, and using that $u(0)=u(L)$).
\begin{proposition}[{{\bf Equivalence}}]
\label{remequiv}
Assume that \eqref{cond1} holds true, and that  the data $(a,g,\phi)$ satisfy  \eqref{a1}-\eqref{condphi1}.  Then $u$ is a weak solution of \eqref{pb1} in the sense of Definition~\ref{defin},  (i.e. $u$ satisfies \eqref{form-def}) if and only if $u$ satisfies 
\begin{equation}\label{form-def3}
\begin{cases}
\dys u\in \huzu,\,\,\, \phi(u)\in L^2(0,L),  \\ 
\dys\exists \,\, c\in\re,\,\, a(x)\frac{du}{dx}=\phi(u)+g+c\ \  \text{in} \ \ \mathcal{D}'(0,L)\,,
\end{cases}
\end{equation}
or, equivalently, if and only if  
\begin{equation}\label{form-def4}
\begin{cases}
\dys u\in \huzu,\,\,\, \phi(u)\in L^2(0,L),  \\ \vspace{0.1cm}
\dys a(x)\frac{du}{dx}=\phi(u)+g+c\ \  \text{in} \ \ \mathcal{D}'(0,L),\\
\dys\mbox{with  }\,\, c=\dys-\frac{\dys\int_0^L \frac{\phi(u)}{a(x)}\,dx+\dys\int_0^L \frac{g(x)}{a(x)}\,dx}{\dys\int_0^L \frac{1}{a(x)}\,dx}.
\end{cases}
\end{equation}

\end{proposition}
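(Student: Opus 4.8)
The plan is to prove the chain \eqref{form-def} $\Rightarrow$ \eqref{form-def3} $\Rightarrow$ \eqref{form-def4} $\Rightarrow$ \eqref{form-def}, the last implication being immediate by differentiation. The crucial observation for the first implication is that the distributional equation in the second line of \eqref{form-def} can be rewritten as
$$
\frac{d}{dx}\left(a(x)\frac{du}{dx}-\phi(u)-g\right)=0 \quad\text{in }\mathcal{D}'(0,L).
$$
Since $u\in\huzu$ gives $\frac{du}{dx}\in L^2(0,L)$ and $a\in L^\infty(0,L)$ by \eqref{a1}, while $\phi(u)\in L^2(0,L)$ by hypothesis and $g\in L^2(0,L)$ by \eqref{g1}, the function $w:=a(x)\frac{du}{dx}-\phi(u)-g$ belongs to $L^2(0,L)$. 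I would then invoke the standard (du Bois-Reymond) lemma that a distribution on an interval whose derivative vanishes in $\mathcal{D}'$ is almost everywhere equal to a constant; calling this constant $-c$ yields $a(x)\frac{du}{dx}=\phi(u)+g+c$ a.e., which is exactly \eqref{form-def3}.

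To pass from \eqref{form-def3} to the explicit value of $c$ in \eqref{form-def4}, I would follow the hint given just before the statement: divide the identity $a(x)\frac{du}{dx}=\phi(u)+g+c$ by $a(x)$, which is licit since $\frac 1a\in L^\infty(0,L)$ by the lower bound $a\geq\alpha>0$ in \eqref{a1}, to obtain
$$
\frac{du}{dx}=\frac{\phi(u)}{a(x)}+\frac{g}{a(x)}+\frac{c}{a(x)}.
$$
All three terms on the right are integrable on $(0,L)$ (the first two because $\phi(u),g\in L^2\subset L^1$ on a bounded interval and $\frac1a$ is bounded, the third because $\frac1a\in L^\infty$). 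Integrating over $(0,L)$ and using that $u\in H^1(0,L)\subset C^0([0,L])$ by Morrey's embedding \eqref{Morrey}, so that $\int_0^L\frac{du}{dx}\,dx=u(L)-u(0)=0$ by the boundary conditions, gives
$$
0=\int_0^L\frac{\phi(u)}{a(x)}\,dx+\int_0^L\frac{g}{a(x)}\,dx+c\int_0^L\frac{1}{a(x)}\,dx,
$$
and solving for $c$ (the coefficient $\int_0^L\frac1a\,dx$ is strictly positive) produces precisely the formula in \eqref{form-def4}.

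Finally, the converse implications \eqref{form-def4}$\Rightarrow$\eqref{form-def} and \eqref{form-def3}$\Rightarrow$\eqref{form-def} are obtained simply by differentiating the identity $a(x)\frac{du}{dx}=\phi(u)+g+c$ in $\mathcal{D}'(0,L)$: the constant $c$ disappears under $\frac{d}{dx}$ and one recovers the second line of \eqref{form-def}. Since each step is reversible, the three formulations are equivalent. I do not expect any genuine obstacle here: the only non-elementary ingredient is the distributional lemma that a vanishing derivative forces a constant, and once the equation is written in the form $\frac{dw}{dx}=0$ with $w\in L^2(0,L)$, everything reduces to a single integration together with the vanishing boundary values of $u$. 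This is why the proposition is qualified as straightforward.
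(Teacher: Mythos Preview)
Your proof is correct and follows exactly the approach the paper indicates: the paper only remarks that the proposition is straightforward and that the formula for $c$ is obtained by dividing the second line of \eqref{form-def4} by $a(x)$, integrating on $(0,L)$, and using $u(0)=u(L)$. You have simply fleshed out these details, together with the standard du Bois-Reymond step that a distribution with vanishing derivative on an interval is a constant.
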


\begin{remark}\label{2.4}
Consider a function $\phi:\re\to\re$ which satisfies 
\begin{equation}
\label{phibis}
\phi\in C^0(\re).
\end{equation}
Then the function $\phi$ satisfies \eqref{condphi1} and the notion of weak solution in the sense of Definition~\ref{defin} of problem \eqref{pb1} for this function $\phi$ is defined.

 On the other hand, when $\phi$ satisfies \eqref{phibis}, it is standard to define a {\it classical weak solution of problem} of \eqref{pb1} for such a function $\phi$ as a function $u$ which satisfies
\begin{equation}\label{form-defbis}
\begin{cases}\vspace{0.1cm}
\dys u\in \huzu, \\ 
\dys -\frac{d}{dx}\left(a(x) \frac{d u}{dx}\right)  = \dys- \frac{d \phi (u)}{dx} - \frac{d g(x) }{dx} \ \  \text{in} \ \ \mathcal{D}'(0,L)\,,
\end{cases}
\end{equation} 
where $\phi(u)$ ``automatically" belongs to $L^2(0,L)$, since by Morrey's embedding (see \eqref{rem210}) one has $\huzu\subset L^{\infty} (0,L)$, which implies when $\phi\in C^0(\re)$  that
\begin{equation*}
 \phi(z)\in L^\infty(0,L)\subset L^2(0,L),\,\, \forall z\in \huzu.
\end{equation*}

When $\phi$ satisfies \eqref{phibis}, the definition of ``weak solution of problem \eqref{pb1} in the sense of Definition~\ref{defin}" coincides with the definition of ``classical weak solution of problem \eqref{pb1}" given by \eqref{form-defbis}. 

Definition~\ref{defin} is actually an extension of definition \eqref{form-defbis} of a ``classical weak" solution of the singular case where $\phi(0)=+\infty$.

 \qed
\end{remark}

\subsection{Existence of a weak solution when $\phi\in C^0(\re)$}

\begin{proposition}\label{propexis}
Assume that \eqref{cond1} holds true, and that  the data $(a,g,\phi)$ satisfy  \eqref{a1}-\eqref{condphi1}. Assume also that
\begin{equation}
\label{phibis2}
\phi\in C^0(\re).
\end{equation}

Then there exists at least one {\rm classical weak solution $u$ of problem} \eqref{pb1} in the sense of \eqref{form-defbis}, or equivalently a function $u$ which satisfies  
\begin{equation}
\label{vf2}
\begin{cases}
\dys u\in \huzu,  \\ 
\dys\int_0^L a(x)\frac{du}{dx}\frac{dz}{dx}\, dx= \int_0^L \phi(u)\frac{dz}{dx}\,dx+\int_0^L g(x)\frac{dz}{dx}\,dx,\quad \forall z\in H_0^1(0,L).
\end{cases}
\end{equation}

This {\rm classical weak solution} is also {\rm a weak solution of problem \eqref{pb1} in the sense of Definition~\ref{defin}.}

Moreover any solution $u$ of \eqref{vf2} satisfies

\begin{equation}
\label{propdu}
\left\|\frac{du}{dx}\right\|_{L^2(0,L)}\leq \frac{1}{\alpha} \|g\|_{L^2(0,L)}\,,
\end{equation}
which in turn implies that 
\begin{equation}
\label{propdui}
\left\|u\right\|_{L^{\infty}(0,L)}\leq \frac{\sqrt{L}}{\alpha} \|g\|_{L^2(0,L)}\,.
\end{equation}
\end{proposition}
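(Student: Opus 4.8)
The plan is to freeze the nonlinearity, solve the resulting linear problem by Lax--Milgram so as to define a nonlinear solution operator whose fixed points are exactly the classical weak solutions, and then to invoke the Leray--Schauder (Schaefer) fixed point theorem. Concretely, for $w\in\huzu$ Morrey's embedding \eqref{rem210} gives $w\in L^\infty(0,L)$, hence $\phi(w)\in L^\infty(0,L)\subset L^2(0,L)$ since $\phi\in C^0(\re)$; by Lax--Milgram (using the coercivity \eqref{a1}) there is then a unique $Tw\in\huzu$ with
\[\int_0^L a(x)\frac{d(Tw)}{dx}\frac{dz}{dx}\,dx=\int_0^L\phi(w)\frac{dz}{dx}\,dx+\int_0^L g(x)\frac{dz}{dx}\,dx,\quad\forall z\in\huzu.\]
A function $u$ solves \eqref{vf2} if and only if $u=Tu$.

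Next I would check that $T:\huzu\to\huzu$ is continuous and compact. The only delicate step is the nonlinear map $w\mapsto\phi(w)$, and here the one-dimensional Morrey estimate \eqref{Morrey} is decisive: together with Arzel\`a--Ascoli it shows that $\huzu$ embeds compactly into $C^0([0,L])$. Hence if $w_n$ is bounded in $\huzu$, a subsequence converges in $C^0([0,L])$ to some $w$; all the $w_n$ then take values in a fixed compact interval $[-K,K]$, on which $\phi$ is uniformly continuous, so $\phi(w_n)\to\phi(w)$ uniformly and therefore in $L^2(0,L)$. Since the linear solve is a bounded operator from $L^2(0,L)$ to $\huzu$, it follows that $Tw_n\to Tw$ in $\huzu$, which yields both continuity and compactness of $T$. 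Note that no growth restriction on $\phi$ at infinity is needed, precisely because $\phi$ is only ever evaluated on the compact range of uniformly bounded arguments.

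The heart of the matter is the a priori bound required by Leray--Schauder: every $u\in\huzu$ satisfying $u=\lambda Tu$ for some $\lambda\in[0,1]$ must be bounded independently of $\lambda$. Writing this identity and testing with $z=u$ gives
\[\int_0^L a(x)\left(\frac{du}{dx}\right)^2dx=\lambda\int_0^L\phi(u)\frac{du}{dx}\,dx+\lambda\int_0^L g(x)\frac{du}{dx}\,dx.\]
The crucial cancellation is that $\int_0^L\phi(u)\frac{du}{dx}\,dx=0$ by Lemma~\ref{prop24ter} (applicable since $u\in\huzu$ and $\phi(u)\in L^2(0,L)$). Combining the coercivity \eqref{a1} with Cauchy--Schwarz then yields $\alpha\left\|\frac{du}{dx}\right\|_{L^2(0,L)}^2\le\lambda\|g\|_{L^2(0,L)}\left\|\frac{du}{dx}\right\|_{L^2(0,L)}$, hence $\left\|\frac{du}{dx}\right\|_{L^2(0,L)}\le\frac1\alpha\|g\|_{L^2(0,L)}$ uniformly in $\lambda$. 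The Leray--Schauder theorem then provides a fixed point $u=Tu$, that is, a solution of \eqref{vf2}.

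Finally, the estimates \eqref{propdu}--\eqref{propdui} hold for \emph{any} solution of \eqref{vf2} by the same computation (the case $\lambda=1$) together with Morrey's estimate \eqref{rem210}, which is exactly Proposition~\ref{28bis}; and any such classical weak solution is also a weak solution in the sense of Definition~\ref{defin}, since $\phi(u)\in L^\infty(0,L)\subset L^2(0,L)$ by Morrey, as already noted in Remark~\ref{2.4}. I expect the main obstacle to be the existence step rather than the estimates: because $\phi$ may grow at infinity, one cannot a priori send a fixed ball of $\huzu$ into itself, so a naive Schauder argument fails. The resolution is the uniform-in-$\lambda$ bound above; equivalently, one could truncate $\phi$ outside $[-M,M]$ with $M=\frac{\sqrt L}{\alpha}\|g\|_{L^2(0,L)}$ to reduce to a bounded nonlinearity and apply Schauder on a ball, the truncation being invisible to the solution by \eqref{propdui}.
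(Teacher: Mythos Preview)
Your proof is correct, but the route differs from the paper's in a way worth noting. The paper proceeds in two steps: first it treats the case $\phi\in C^0_b(\re)$, setting up the fixed-point map on $L^2(0,L)$ (rather than $\huzu$) and using the crude estimate $\alpha\|\frac{du}{dx}\|_{L^2}\le\sqrt{L}\|\phi\|_{L^\infty}+\|g\|_{L^2}$, which depends on $\|\phi\|_{L^\infty}$, to show that a fixed ball is mapped into itself; then for general $\phi\in C^0(\re)$ it truncates $\phi$ outside $[-m,m]$ with $m\ge\frac{\sqrt L}{\alpha}\|g\|_{L^2}$, applies Step~1, and observes a posteriori that the truncation is inactive on the solution. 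You instead run Schaefer's version of Leray--Schauder directly in $\huzu$, obtaining the uniform-in-$\lambda$ bound by invoking the cancellation $\int_0^L\phi(u)\frac{du}{dx}\,dx=0$ of Lemma~\ref{prop24ter} already at the a~priori stage. This is slightly more elegant: it handles unbounded $\phi$ in one pass and makes the estimate \eqref{propdu} do the work both for existence and for the final bounds, whereas the paper's argument keeps the existence step ``soft'' (no need for the cancellation lemma there) at the price of an explicit truncation layer---which, as you correctly observe at the end, is the natural alternative.
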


\begin{proof}[\rm{\bf{Proof of Proposition~\ref{propexis}}}]

\noindent{\bf Step 1.} Further to \eqref{phibis2}, let us assume in this first  step that 
\begin{equation}
\label{phibis2bis}
\phi\in C^0_b(\re)=C^0(\re)\cap L^\infty(\re).
\end{equation}

For every $\overline{u}\in L^2(0,L)$, define $u$ as  the unique solution of the linear problem  
\begin{equation}\label{form-defbis2}
\begin{cases}\vspace{0.1cm}
\dys u\in \huzu, \\ 
\dys -\frac{d}{dx}\left(a(x) \frac{d u}{dx}\right)  =\dys - \frac{d \phi (\overline{u})}{dx} - \frac{d  g}{dx} \ \  \text{in} \ \ \mathcal{D}'(0,L).
\end{cases}
\end{equation} 
Using $u$ as test function in the variational formulation of \eqref{form-defbis2}, one has 
$$
\int_0^L a(x)\left|\frac{du}{dx}\right|^2 \, dx= \int_0^L \phi(\overline{u})\frac{du}{dx}\, dx +\int_0^L g(x)\frac{du}{dx}\, dx,
$$
which implies that
$$
\alpha\left\|\frac{du}{dx}\right\|_{L^2(0,L)}\leq \sqrt{L}\|\phi\|_{L^\infty(\re)}+  \|g\|_{L^2(0,L)}.
$$
Recalling Poincar\'e's inequality \eqref{Poincare}, 
we get that 
\begin{equation*}
\dys\|u\|_{L^2(0,L)}\dys\leq L\left\|\frac{du}{dx}\right\|_{L^2(0,L)} \leq\frac{L}{\alpha}\left(\sqrt{L}\|\phi\|_{L^\infty(\re)}+\|g\|_{L^2(0,L)}\right).
\end{equation*}

Then Leray-Schauder's fixed point theorem applied to the map $W$ defined by
$$
W:\overline{u}\in L^2(0,L)\to W(\overline{u})=u\in L^2(0,L)
$$
and to the ball $B$ of $L^2(0,L)$ defined by 
$$
B=\left\{z\in L^2(0,L):\, \|z\|_{L^2(0,L)}\leq \frac{L}{\alpha}\left(\sqrt{L}\|\phi\|_{L^\infty(\re)}+\|g\|_{L^2(0,L)}\right)\right\}
$$
implies, using also the Rellich-Kondrachov's compactness theorem, that $W$ has at least a fixed point in $B$. 

When $\phi$ satisfies \eqref{phibis2bis}, this proves that there exists a classical weak solution of problem \eqref{pb1}, i.e. a solution of \eqref{vf2} (or of \eqref{form-defbis}, which is equivalent to \eqref{vf2}). 

Using then $z=u$ as test function in \eqref{vf2} and Lemma~\ref{prop24ter} implies that 
$$
\int_0^L a(x)\frac{du}{dx}\frac{du}{dx}\, dx= \int_0^L g(x)\frac{du}{dx}\,dx,
$$
which immediately implies \eqref{propdu}, which in turns implies \eqref{propdui} using Morrey's inequality \eqref{rem210} (where the latest inequality is specific to the one-dimensional case).

Observe that when $\phi$ satisfies \eqref{phibis2bis} and not only \eqref{phibis2} in Proposition~\ref{propexis}, the existence of (at least) one classical weak solution of \eqref{vf2} and estimate \eqref{propdu}, as well as their proofs, continue to hold true in dimension $N >1$; in contrast estimate \eqref{propdui} is a result specific to the case $N=1$, since it follows from Morrey's inequality \eqref{rem210}.

\noindent{\bf Step 2.}  Let us now consider the case where the sole hypothesis \eqref{phibis2} holds true, i.e. the case where $\phi$ belongs to $C^0(\re)$ and not necessarily  to $C^0_b(\re)$.

In this case, fix $m$ which satisfies
\begin{equation}
\label{defm}
m\geq \frac{\sqrt{L}}{\alpha}\|g\|_{L^2(0,L)},
\end{equation}
and consider the function $\phi_m:\re\to\re$ defined by
$$
\phi_m(s)=\phi(T_m(s)), \quad \forall s\in\re,
$$
where $T_m$ is the truncation at height $m$ defined by \eqref{tk}. Then $\phi_m$ belongs to $C^0_b(\re)$, and Step 1 implies that there exists at least one solution $u_m$ of 
\begin{equation}
\label{vf2m}
\begin{cases}
\dys u_m\in \huzu,  \\ 
\dys\int_0^L a(x)\frac{du_m}{dx}\frac{dz}{dx}\, dx= \int_0^L \phi_m(u_m)\frac{dz}{dx}\,dx+\int_0^L g(x)\frac{dz}{dx}\,dx,\,\,\, \forall z\in H_0^1(0,L).
\end{cases}
\end{equation}

Moreover every solution $u_m$ of \eqref{vf2m} satisfies \eqref{propdu} and \eqref{propdui}, and therefore in view of \eqref{defm} 
$$
\left\|u_m\right\|_{L^{\infty}(0,L)}\leq \frac{\sqrt{L}}{\alpha} \|g\|_{L^2(0,L)}\leq m.
$$
This implies that
$$
T_m(u_m)=u_m, \,\,\mbox{ and }\,\, \phi_m(u_m)=\phi(T_m(u_m))=\phi(u_m),
$$
which in turn implies that $u_m$ is also a solution of \eqref{vf2} for the function $\phi$.

Proposition~\ref{propexis} is then proved in full generality.

\end{proof}

\subsection{Definition of a sequence of reasonable approximations of $\phi$}

 Let us conclude this section by introducing the following definition.
\begin{defin}
\label{reasonable}
Let $\phi$ be a function which satisfies \eqref{condphi1}. We will say that a sequence $\phi_n$ of functions is a {{\it reasonable sequence of approximations} (or simply  a {\it reasonable approximation}) of $\phi$} if the sequence $\phi_n$ satisfies
\begin{equation}
\label{phincond1}
\phi_n \mbox{ satisfies assumption \eqref{condphi1} for every given $n$},
\end{equation}
\begin{equation}
\begin{cases}
\label{phincond2}
\mbox{for every sequence $s_n\in\re$ and every $s\in\re$ such that $s_n\to s$ in $\re$,}\\
\mbox{then } \phi_n(s_n)\to\phi(s) \mbox{ in } \re\cup\{+\infty\}.
\end{cases}
\end{equation} \qed
\end{defin}

\begin{remark}[{{\bf Examples}}]\label{examples}
Let us list, for functions $\phi$ which satisfy \eqref{condphi1} (and possibly \eqref{p3}), five examples of reasonable approximations of $\phi$.

\bigskip
The {\bf first example} is the {\bf approximation by truncation}, which consists in taking, for every function $\phi$ which satisfies \eqref{condphi1}, the sequence of functions $\phi_n$ defined by
\begin{equation}
\label{ex1}
\phi_n(s)=T_n(\phi(s)),\quad \forall s\in\re,\,\, \forall n\in\mathbb{N},
\end{equation}
where $T_n$ is the truncation at height $n$ defined by  \eqref{tk}.

It is easy to prove that $\phi_n\in C^0_b(\re)$, which proves \eqref{phincond1}. It is also easy to prove \eqref{phincond2} when $s\not=0$, as well as when $\phi(0)<+\infty$. 

The case where  $\phi(0)=+\infty$ and where $s_n\to 0$ requires special attention. In this case one has indeed to prove that
$$
\mbox{if } \,\, s_n\to0, \,\, \mbox{ then } \,\, \phi_n(s_n)=T_n(\phi(s_n))\to\phi(0)= +\infty.
$$
This can be done using the facts that $T_n(r)\geq T_m(r)$ for every $n\geq m>0$ and for every $r> 0$.
Indeed, as $s_n\to 0$, one has $\phi(s_n)>0$ for every $n$ sufficiently large. Therefore for every $m>0$ and $n$ sufficiently large, one has
$$
\phi_n(s_n)=T_n (\phi(s_n))\geq T_m (\phi(s_n))\to T_m (+\infty)=m \quad \mbox{for every $m>0$ fixed}.
$$

This completes the proof of the fact that the sequence $\phi_n$ of approximations by truncation defined by \eqref{ex1} is a reasonable sequence of approximations of any function $\phi$ which satisfies \eqref{condphi1}.

Note that the approximations by truncation \eqref{ex1} satisfy 
\begin{equation}
\label{2.61}
\phi_n\in C^0_b(\re),\quad |\phi_n(s)|\leq |\phi(s)|,\quad |\phi_n(s)|\leq n, \quad \forall s\in\re,\,\, \forall n\in\mathbb{N}.
\end{equation}

\bigskip
The {\bf second example} is the {\bf homographic approximation},  which consists in\break taking, for every function $\phi$ which satisfies \eqref{condphi1}, the sequence of functions $\phi_n$ defined by
\begin{equation}
\label{ex2}
\begin{cases}\vspace{0.2cm}
\dys\phi_n(s)=\frac{\phi(s)}{1+\frac 1n |\phi(s)|}, \quad \forall s\in\re,\,\, s\not=0,\\
\dys \phi_n(0)=\begin{cases}
\dys\frac{\phi(0)}{1+\frac 1n |\phi(0)|}, &\quad \mbox{ when } \,\, \phi(0)<+\infty,\\
\dys n,& \quad \mbox{ when } \,\, \phi(0)=+\infty.
\end{cases}
\end{cases}
\end{equation}
It is easy to prove that $\phi_n\in C^0_b(\re)$, which implies \eqref{phincond1}: indeed, when  $\phi(0)=+\infty$, one has, for every fixed $n$,  
$$
\phi_n(s)\sim \frac{\phi(s)}{\frac 1n |\phi(s)|}=n,\quad \mbox{ if } s\to 0,\,\, s\not=0.
$$

Here again the only (small) difficulty in proving \eqref{phincond2} is the case where $\phi(0)=+\infty$ and where $s_n\to 0$. Since $\phi(s_n)>0$ for $n$ sufficiently large, this is done by considering first subsequences $\{n'\}\subset\{n\}$ for which 
$$
\frac {1}{n'} |\phi(s_{n'})|=\frac {1}{n'} \phi(s_{n'})\to c, \quad \mbox{with} \quad  0\leq c<+\infty,\,\,\mbox{ as } n'\to +\infty,  
$$
and then subsequences $\{n'\}\subset\{n\}$ for which 
$$
\frac {1}{n'} |\phi(s_{n'})|=\frac {1}{n'} \phi(s_{n'})\to +\infty, \quad \mbox{ as } n'\to +\infty;  
$$
in the latest case, one has either $\phi_{n'}(s_{n'})=n'$ if $s_{n'}=0$, or if $s_{n'}\not=0$,
$$
\phi_{n'}(s_{n'})=\frac{\phi(s_{n'})}{1+\frac {1}{n'} |\phi(s_{n'})|}\sim \frac{\phi(s_{n'})}{\frac{1}{n'} |\phi(s_{n'})|}= \frac{\phi(s_{n'})}{\frac {1}{n'} \phi(s_{n'})}=n'. 
$$

In any  cases  one has 
$$
\phi_{n'}(s_{n'})\rightarrow +\infty=\phi(0),\quad \mbox{ as } \,\, n'\to+\infty,
$$
which proves \eqref{phincond2}.

This completes the proof of the fact that the sequence $\phi_n$ of homographic approximations defined by \eqref{ex2} is a reasonable approximation of any function $\phi$ which satisfies \eqref{condphi1}.

Note that here again the homographic approximations satisfy \eqref{2.61}.

\bigskip
The {\bf third example}  is the {\bf trivial approximation of the function $\phi$ by itself}, which consists in taking, for every function $\phi$ which satisfies \eqref{condphi1} the sequence of approximations defined by
\begin{equation}
\label{ex3}
\phi_n(s)=\phi(s), \quad \forall s\in\re,\,\, \forall n\in\mathbb{N}.
\end{equation}

In this trivial example, it is clear that \eqref{phincond1}-\eqref{phincond2} hold true, and that the trivial sequence of approximations defined by  \eqref{ex3} is a reasonable sequence of approximations of any function $\phi$ which satisfies \eqref{condphi1}.

Note that in contrast with the first and second examples, one does not have the property $\phi_n\in C^0_b(\re)$ when $\phi\not\in C^0_b(\re)$.

\bigskip
The {\bf fourth example} consists essentially in {\bf approximating $\dys\frac{1}{|s|^\gamma}$ by} $\dys\frac{1}{|s|^{\gamma_n}}$, 
 in the specific case where  $\phi$ is the model example  \eqref{pM} given by 
\begin{equation}
\label{ex4}
\dys\phi_{\gamma} (s)=\frac{c}{|s|^{\gamma}} +\varphi (s),
\mbox{ with } c>0, \,\gamma>0,\, \varphi\in C^0(\re).
\end{equation}
In this case one can approximate the function $\phi_{\gamma}$ by the sequence of functions $\phi_n$ given by 
\begin{equation}
\label{2.65}
\dys\phi_n (s)=\frac{c_n}{|s|^{\gamma_n}} +\varphi_n (s),
\mbox{ with } c_n>0, \,\gamma_n>0,\, \varphi_n\in C^0(\re),
\end{equation}
where 
\begin{equation}
\label{2.66}
c_n\to c, \quad  \gamma_n\to\gamma,\quad \varphi_n\to\varphi \quad \mbox{ uniformly in } C^0([-R,+R]) \mbox{ for every fixed } R.
\end{equation}

Using in particular the fact that for every $\varepsilon$, with $0<\varepsilon<\min\{\gamma,c\}$, one has 
$$
\phi_n(s_n)=\frac{c_n}{|s_n|^{\gamma_n}}+\varphi_n(s_n)\geq \frac{c-\varepsilon}{|s_n|^{\gamma-\varepsilon}}+\varphi_n(s_n)\to +\infty=\phi(0), \quad \mbox{ as } \,\, s_n\to 0, \,\, s_n\not=0,
$$
allows one to prove that the sequence $\phi_n$ defined by \eqref{2.65}-\eqref{2.66} is a reasonable\break sequence of approximations of the function $\phi$ defined by \eqref{ex4}. Here again one does not have  $\phi_n\in C^0_b(\re)$. 

\bigskip
The {\bf fifth example} concerns the case where $\phi\in C^0 (\re)$ and consists in this case in the classical {\bf approximation by convolution}, namely 
$$
\phi_{\varepsilon}=\phi * \rho_{\varepsilon}\,,
$$
where $\rho_{\varepsilon}$ is a standard sequence of mollifiers, i.e. $\rho_{\varepsilon} =\varepsilon \rho(\varepsilon x)$
with $\rho\in C^{\infty}_{c}(\re)$, $\rho\geq 0$, such that $\int_\re \rho = 1$.  Then  $\phi_{\varepsilon}$ is a reasonable sequence of approximations of $\phi$ since $\phi_{\varepsilon}$ converges locally uniformly to $\phi$. Here one has $\phi_{\varepsilon} \in C^{\infty} (\re)$.

Moreover,  in the case where $\phi\in C^0 (\re)$ is constant at infinity, namely when there exists $R>0$ such that 
$$\phi(s)=
\begin{cases}
\phi(+R) & \text{if}\ s>+ R,\\
\phi(-R) & \text{if}\ s< -R,
\end{cases}
$$
then for every ${\varepsilon} $ the function $\phi_{\varepsilon}  =\phi*\rho_{\varepsilon} $ is Lipschitz continuous on $\re$.  

\qed
\end{remark}

To conclude this section, let us state and prove a characterization of a sequence of reasonable approximations defined by Definition~\ref{reasonable}.
\begin{proposition}\label{rem29}
Assume that $\phi$ satisfies hypotheses \eqref{condphi1}-\eqref{p3}. Then   Definition~\ref{reasonable} is equivalent to assert that the sequence $\phi_n$ satisfies \eqref{phincond1} as well the following two properties 
\begin{equation}
\begin{cases}
\label{phincond3}
\mbox{{\rm for every} $\eta$ {\rm and} $R$, } 0<\eta<R,\\
 \phi_n\to\phi \mbox{ {\rm uniformly in} $C^0([+\eta,+R])$ {\rm and in} $C^0([-R,-\eta])$},
\end{cases}
\end{equation}
\vspace{-0.1cm}
\begin{equation}
\label{phincond4}
\dys\liminf_{n\to\infty}\left(\inf_{t\in[-\eta,+\eta]}\phi_n(t)\right)\to +\infty, \,\,\mbox{ \rm as } \eta\to 0^+.
\end{equation}
\end{proposition}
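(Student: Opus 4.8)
The plan is to prove the stated equivalence by fixing the common hypothesis \eqref{phincond1} and showing that, for a sequence $\phi_n$ satisfying it, property \eqref{phincond2} holds if and only if the pair \eqref{phincond3}--\eqref{phincond4} holds. Before starting I would record one preliminary observation used repeatedly: \eqref{phincond2} is equivalent to its \emph{subsequential form}, namely that for every subsequence $\{n_k\}$ and every sequence $t_k\to t$ one has $\phi_{n_k}(t_k)\to\phi(t)$. Indeed, any such subsequence and points can be completed to a full sequence $s_m\to t$ by setting $s_m=t$ for the missing indices and using that the constant-sequence instance of \eqref{phincond2} forces $\phi_m(t)\to\phi(t)$; then \eqref{phincond2} applied to $s_m$ yields the subsequential conclusion. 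This lets me argue freely by extraction.

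For the implication \eqref{phincond2} $\Rightarrow$ \eqref{phincond3}--\eqref{phincond4} I would proceed as follows. For \eqref{phincond3} I argue by contradiction: on a compact interval $[+\eta,+R]$ (the interval $[-R,-\eta]$ being symmetric) $\phi$ is finite and continuous by \eqref{rem0inf3}, so if uniform convergence failed there would be $\varepsilon_0>0$, a subsequence $\{n'\}$ and points $t_{n'}\to t_*\in[+\eta,+R]$ with $|\phi_{n'}(t_{n'})-\phi(t_{n'})|\ge\varepsilon_0$; the subsequential form of \eqref{phincond2} gives $\phi_{n'}(t_{n'})\to\phi(t_*)\in\re$, while continuity gives $\phi(t_{n'})\to\phi(t_*)$, a contradiction. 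For \eqref{phincond4} I set $F(\eta):=\liminf_n\inf_{t\in[-\eta,\eta]}\phi_n(t)$, note that $F$ is non-increasing in $\eta$ so that $F(\eta)$ has a limit $M\in\re\cup\{+\infty\}$ as $\eta\to0^+$, and I must rule out $M<+\infty$. Assuming $M<+\infty$, for each $j$ the bound $F(1/j)\le M$ yields infinitely many $n$ with $\inf_{t\in[-1/j,1/j]}\phi_n(t)\le M+1$; here I use that this infimum is a real number (not $+\infty$), since $\phi_n(s)<+\infty$ for $s\neq0$ by \eqref{phincond1}. A diagonal choice then produces an increasing sequence $n_j$ and points $t_{n_j}\in[-1/j,1/j]$ with $\phi_{n_j}(t_{n_j})\le M+2$; since $t_{n_j}\to0$ while $\phi(0)=+\infty$ by \eqref{p3}, this contradicts the subsequential form of \eqref{phincond2}.

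For the converse, \eqref{phincond3}--\eqref{phincond4} $\Rightarrow$ \eqref{phincond2}, I would take $s_n\to s$ and split into two cases. If $s\neq0$, I fix $\eta,R$ with $\eta<|s|<R$; for $n$ large $s_n$ lies in the compact interval on the side of $s$, and the estimate $|\phi_n(s_n)-\phi(s)|\le\|\phi_n-\phi\|_{C^0}+|\phi(s_n)-\phi(s)|$ together with \eqref{phincond3} and the continuity of $\phi$ gives $\phi_n(s_n)\to\phi(s)$. If $s=0$, then $\phi(s)=+\infty$ and I must show $\phi_n(s_n)\to+\infty$: given $K$, \eqref{phincond4} furnishes $\eta_0$ with $F(\eta_0)>K$, hence $\inf_{t\in[-\eta_0,\eta_0]}\phi_n(t)>K$ for all large $n$; since $s_n\to0$ we have $s_n\in[-\eta_0,\eta_0]$ eventually, so $\phi_n(s_n)>K$ eventually, and $K$ being arbitrary gives the claim.

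I expect the main obstacle to be the implication \eqref{phincond2} $\Rightarrow$ \eqref{phincond4}, where one must manufacture a single sequence tending to $0$ along which $\phi_n$ stays bounded; the delicate points are the diagonal extraction and the verification that the infima over the symmetric intervals $[-\eta,\eta]$ are finite, so that near-minimizers with controlled values exist, which rests on $\phi_n(s)<+\infty$ for $s\neq0$. The careful, repeated translation between the full-sequence statement \eqref{phincond2} and its subsequential form is the other point requiring attention.
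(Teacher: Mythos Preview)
Your proof is correct and follows the same overall architecture as the paper's: the converse direction \eqref{phincond3}--\eqref{phincond4} $\Rightarrow$ \eqref{phincond2} is handled identically (split $s\ne 0$ versus $s=0$), and for \eqref{phincond2} $\Rightarrow$ \eqref{phincond3} both arguments amount to extracting a convergent sequence of near-maximizers on the compact interval and invoking continuity.

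The one genuine difference is in \eqref{phincond2} $\Rightarrow$ \eqref{phincond4}. You argue by contradiction via a diagonal extraction: if $\sup_{\eta>0}F(\eta)=M<+\infty$, build $n_j\uparrow\infty$ and $t_{n_j}\in[-1/j,1/j]$ with $\phi_{n_j}(t_{n_j})\le M+2$, contradicting the subsequential form of \eqref{phincond2} at $s=0$. The paper instead establishes the \emph{exact} identity
\[
\liminf_{n}\Big(\inf_{t\in[-\eta,+\eta]}\phi_n(t)\Big)=\inf_{t\in[-\eta,+\eta]}\phi(t)\qquad\text{for every }\eta>0,
\]
by separately proving the upper bound (via constant sequences) and the lower bound (after first checking the infima are uniformly bounded below), and then lets $\eta\to0^+$. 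Your route is shorter and avoids the auxiliary boundedness-from-below lemma; the paper's route yields a sharper intermediate statement, which it in fact reuses later (in the proof of Proposition~\ref{prop35}). Either argument suffices for the proposition as stated.
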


\begin{remark}
The meaning of the two properties \eqref{phincond3}-\eqref{phincond4} is that, {\it in some sense}, the sequence of functions $\phi_n:\re\to\re\cup\{+\infty\}$ (which are assumed to satisfy \eqref{phincond1}) {\it locally uniformly converges} to the function  $\phi:\re\to\re\cup\{+\infty\}$ which satisfies\break \eqref{condphi1}-\eqref{p3}. Note however that the local uniform convergence is usually  defined only for functions from $\re$ into $\re$, while here one has $\phi(0)=+\infty$.

 Property \eqref{phincond3} is indeed nothing but the classical local uniform convergence of the sequence $\phi_n$ to $\phi$ in $\re-\{0\}$, while property \eqref{phincond4} asserts that $\phi_n$ {\it uniformly converges to $+\infty$ around $s=0$}.

When $\phi(0)$ is finite, a variant of  the proof below shows that  it is equivalent for an approximation  $\phi_n$ to  satisfy \eqref{phincond2} or 
to converge locally  uniformly on the whole of $\re$, i.e. in $C^0([-R,+R])$ for every $R\in\re$. Note that when $\phi(0)$ is finite, then necessarily $\phi_n(0)$ is finite for $n$ sufficiently large (take $s_n=0$ in \eqref{phincond2}). 

\qed
\end{remark}

\begin{proof}[\rm{\bf{Proof of Proposition~\ref{rem29}}}]

\noindent{\bf Step 1.} Let us first prove that if $\phi$ satisfies \eqref{condphi1}-\eqref{p3}, and if the sequence $\phi_n$ satifies \eqref{phincond1}, \eqref{phincond3}-\eqref{phincond4}, then the sequence $\phi_n$ satisfies \eqref{phincond2}.

Consider indeed on the first hand a sequence $s_n$ which satisfies 
$$
s_n\to s \,\,\mbox{ as } \,\, n\to+\infty,\,\, s\not=0,
$$
and write
$$
\phi_n(s_n)-\phi(s)=(\phi_n(s_n)-\phi(s_n))+(\phi(s_n)-\phi(s)).
$$
Then using \eqref{phincond3} and \eqref{rem0inf3} proves \eqref{phincond2}.

Consider on the other hand a sequence $s_n$ which satisfies 
$$
s_n\to 0 \,\,\mbox{ as } \,\, n\to+\infty.
$$
Then for every $\eta>0$, one has $|s_n|\leq \eta$ for $n$ sufficiently large, and the inequality $\phi_n(s_n)\geq \inf_{t\in [-\eta,+\eta]} \phi_n(t)$ for $n$ sufficiently large implies that 
$$
\liminf_{n\to+\infty}\phi_n(s_n)\geq \liminf_{n\to+\infty}\inf_{t\in [-\eta,+\eta]} \phi_n(t),
$$
which combined with \eqref{phincond4} proves that 
$$
\phi_n(s_n)\to+\infty=\phi(0), \,\,\mbox{ as } \,\, n\to +\infty.
$$

This completes the proof of \eqref{phincond2}.

 \noindent{\bf Step 2.} Conversely let us prove that if $\phi$ satisfies \eqref{condphi1}-\eqref{p3}, and if the sequence $\phi_n$ satisfies \eqref{phincond1}-\eqref{phincond2}, then the sequence $\phi_n$ satisfies  \eqref{phincond3}-\eqref{phincond4}.  

{\bf As far as \eqref{phincond3} is concerned}, fix $\eta$ and $R$ with $0<\eta<R$ and choose any $s_n\in[\eta,R]$ such that  
\begin{equation}
\label{2.100}
|\phi_n(s_n)-\phi(s_n)|=\max_{t\in [\eta,R)}|\phi_n(t)-\phi(t)|= \sup_{t\in [\eta,R]}|\phi_n(t)-\phi(t)|= \|\phi_n-\phi\|_{C^0([\eta,R])};
\end{equation}
observe indeed that in \eqref{2.100} the supremum on $[\eta,R]$ is actually a maximum. From any subsequence denoted by $\{n'\}$ of $\{n\}$, extract  from ${n'}$ a subsequence $\{n''\}\subset\{n'\}$ such that the subsequence $s_n''\in[\eta,R]$ converges to some $s\in[\eta,R]$, and apply \eqref{phincond2} to $s_n''$; then 
\begin{equation*}
\phi_{n''}(s_{n''})\to\phi(s), \,\, \mbox{ as } \,\, n''\to +\infty.
\end{equation*}
Since 
\begin{align*}
\|\phi_{n''}-\phi\|_{C^0([\eta,R])}=|\phi_{n''}(s_{n''})-\phi(s_{n''})|\leq  |\phi_{n''}(s_{n''})-\phi(s)|+|\phi(s)-\phi(s_{n''})|,
\end{align*}
using \eqref{phincond2} and \eqref{condphi1} implies that 
\begin{equation}
\label{2.101}
\phi_{n''}\to\phi  \,\, \mbox{ uniformly in  } \,\, C^0([\eta,R]), \,\, \mbox{ as } n''\to +\infty.
\end{equation}
The fact that the limit in \eqref{2.101} does not depend on the subsequence $n'$ implies that the convergence \eqref{2.101} takes place for the whole sequence $\{n\}=\mathbb{N}$.

A similar proof implies the similar result in $C^0([-R,-\eta])$, and \eqref{phincond3} is proved.

{\bf As far as \eqref{phincond4} is concerned}, fix $\eta>0$ and choose any $s_n\in [-\eta,+\eta]$ such that 
 \begin{equation}
\label{propphin}
\phi_n(s_n)=\min_{t\in [-\eta,+\eta]} \phi_n(t)= \inf_{t\in [-\eta,+\eta]} \phi_n(t);
\end{equation}
observe indeed that in \eqref{propphin} the infimum on $[-\eta,+\eta]$ is actually a minimum, and that $\phi_n(s_n)$ is finite for every $n$ since, for every $\overline{s}\in  [-\eta,+\eta]$, one has
$$
\phi_n(s_n)=\inf_{|t|\leq \eta} \phi_n(t)\leq  \phi_n(\overline{s}).
$$
Since 
$$
\phi_{n}(\overline{s})\to\phi(\overline{s}), \,\, \mbox{ as } \,\, n\to +\infty,
$$
this implies that for some constant $\overline{C}$, one has
\begin{equation}
\label{2.102}
\phi_{n}(s_n)\leq \overline{C}, \quad \forall n\in\mathbb{N},
\end{equation}
as well as
$$
\liminf_{n}\phi_{n}(s_{n})\leq \lim_{n}\phi_n(\overline{s})=\phi(\overline{s}),\quad \forall \overline{s}\in[-\eta,+\eta],
$$
so that
\begin{equation}
\label{2.105}
\liminf_{n}\phi_{n}(s_{n})\leq \inf_{s\in [-\eta,+\eta]}\phi(s).
\end{equation}

Let us now prove that for some constant $\underline{C}$, one has 
\begin{equation}
\label{2.106}
\phi_{n}(s_n)\geq \underline{C}, \quad \forall n\in\mathbb{N}.
\end{equation}
Indeed, if \eqref{2.106} does not hold true, there exists a subsequence $\{n'\}\subset\{n\}=\mathbb{N}$ such that 
\begin{equation}
\label{2.107}
\phi_{n'}(s_{n'})\to -\infty, \,\, \mbox{ as } \,\, n'\to +\infty.
\end{equation}
Extract from $\{n'\}$ a subsequence $\{n''\}\subset\{n'\}$ such that the subsequence $s_n''\in[-\eta,+\eta]$ converges to some $s\in[-\eta,+\eta]$, and apply \eqref{phincond2} to $s_n''$; then 
\begin{equation*}
\phi_{n''}(s_{n''})\to\phi(s), \,\, \mbox{ as } \,\, n''\to +\infty,
\end{equation*}
in contradiction with \eqref{2.107}, since $\phi:\re\rightarrow\re\cup\{+\infty\}$.

We use the fact that from any sequence $\rho_n$ with $\underline{c}\leq \rho_n\leq \overline{c}$ for some $\underline{c},\overline{c}\in\re$, one can extract a subsequence $\{n'\}\subset\{n\}$ such that 
$$
\lim_{n'} \rho_{n'}=\liminf_{n}\rho_n.
$$

\noindent Using this result  with $\rho_n=\phi_n(s_n)$, which satisfies $\underline{C}\leq \phi_n(s_n)\leq \overline{C}$ in view of \eqref{2.106} and \eqref{2.102}, there exists a subsequence $\{n'\}\subset\{n\}$ such that 
$$
 \lim_{n'}\phi_{n'}(s_{n'})=\liminf_{n}\phi_{n}(s_{n}).
$$
Extract from $\{n'\}$ a subsequence $\{n''\}\subset\{n'\}$ such that the subsequence $s_n''\in[-\eta,+\eta]$ converges to some $s\in[-\eta,+\eta]$, and apply \eqref{phincond2} to $s_n''$; then
$$
\liminf_{n}\phi_{n}(s_{n})= \lim_{n''}\phi_{n''}(s_{n''})=\phi(s)\geq \inf_{t\in [-\eta,+\eta]}\phi(t).
$$

Combining this result with \eqref{2.105}, we have proved that
\begin{equation}
\label{2.110}
\forall \eta>0,\quad \liminf_{n}\left(\inf_{t\in [-\eta,+\eta]}\phi_{n}(t)\right)= \liminf_{n}\phi_{n}(s_{n})=\inf_{t\in [-\eta,+\eta]}\phi(t).
\end{equation}
Since in view of  \eqref{condphi1}-\eqref{p3} one has 
\begin{equation}
\label{2.111}
\inf_{t\in [-\eta,+\eta]} \phi(t)\to\phi(0)=+\infty, \,\,\mbox{ as } \eta\to 0^+,
\end{equation}
we have proved \eqref{phincond4}. 

This completes the proof of Proposition~\ref{rem29}.

\end{proof}

\section{Approximation of problem \eqref{pb1}, a priori estimates, \\and an alternative}\label{Sec3}

As we already said, in order to  (try to) prove the existence of a weak solution of problem \eqref{pb1} in the sense of Definition~\ref{defin}, one proceed as usual by approximation,  finding suitable  priori estimates, and finally passing to the limit.

\subsection{Approximation of problem \eqref{pb1} and the main difficulty}\label{subapprox}

We assume that  hypothesis \eqref{cond1} holds true (so that we are dealing with a one-dimensional problem), and that the data $(a,g,\phi)$ satisfy hypotheses \eqref{a1}-\eqref{condphi1} and \eqref{p3},
and we consider sequences $(a_n, g_n,\phi_n)$ of ``approximated data" which satisfy, for some $\beta> \alpha>0$, and some $c_0>0$ 
\begin{equation}
\label{sequence}
a_n\in L^\infty(0,L),\,\, \exists \,\, \alpha,\beta,\,\, 0<\alpha< \beta,\,\, \alpha\leq a_n(x)\leq  \beta,\,\, a_n(x)\to a(x)\,\, \mbox{ a.e. } x\in(0,L),
\end{equation}
\begin{equation}
\label{propgn}
  g_n\in L^2(0,L),\,\, \,\, \|g_n\|_{L^2(0,L)}\leq c_0,\,\, g_n\rightharpoonup g \,\, \mbox{ weakly in } \, L^2(0,L),
\end{equation}
\begin{equation}
\label{phin}
\phi_n \,\, \mbox{ is a sequence of reasonable approximations of } \,\,  \phi,
\end{equation}
(recall Definition~\ref{reasonable} in Section~\ref{sec2}), as well as the further regularity assumption on $\phi_n$
\begin{equation}
\label{phinconv}
\phi_n\in C^0(\re) \,\,\mbox{ for every given } \,\, n.
\end{equation}

Proposition~\ref{propexis} then ensures that for every $n$ there exists at least one classical weak solution of problem \eqref{pb1} for every $(a_n,g_n,\phi_n)$, namely at least one function $u_n$ which satisfies (see \eqref{vf2bis}) the energy equality

\begin{equation}
\label{35bis}
\dys\int_0^L a_n(x)\frac{du_n}{dx}\frac{du_n}{dx}\, dx= \int_0^L g_n(x)\frac{du_n}{dx}\,dx,
\end{equation}
which implies
\begin{equation}
\label{intseq}
\begin{cases}\vspace{0.1cm}
u_n\in \huzu,\\
\dys \int_0^L a_n(x) \frac{d u_n}{dx} \frac{dz}{dx}\,dx=\int_0^L \phi_n(u_n)\frac{dz}{dx}\,dx +\int_0^L g_n \frac{dz}{dx}\, dx, \quad \forall z\in\huzu.
\end{cases}
\end{equation}

Moreover the function $u_n$ satisfies  (see \eqref{propdu})
\begin{equation}
\label{propdun}
\left\|\frac{du_n}{dx}\right\|_{L^2(0,L)}\leq \frac{1}{\alpha} \|g_n\|_{L^2(0,L)}\leq \frac{c_0}{\alpha}.
\end{equation}

One can therefore extract a subsequence, denoted by $n'$, and there exists some\break $u\in\huzu$ such that
\begin{equation}
\label{convweak}
u_{n'}\rightharpoonup u \,\,\mbox{ weakly in } \,\, \huzu \,\,\mbox{ and a.e. }\,\, x\in(0,L), 
\end{equation}
thanks to Rellich-Kondrashov's theorem. One easily passes to the limit in the first and last terms of \eqref{intseq} thanks to the strong convergence of $\dys a_n\frac{dz}{dx}$ and the weak convergence of $g_n$ in $L^2(0,L)$, which result from \eqref{sequence} and \eqref{propgn}, obtaining
\begin{equation}
\label{propintn}
\begin{cases}\vspace{0.1cm}
\dys\int_0^L a_{n'}(x) \frac{d u_{n'}}{dx} \frac{dz}{dx}\,dx\to \int_0^L a(x) \frac{d u}{dx} \frac{dz}{dx}\,dx,\\
\dys\int_0^L g_{n'} \frac{dz}{dx}\, dx\to \int_0^L g \frac{dz}{dx}\, dx, 
\end{cases}
\,\, \mbox{ as } \,\, {n'}\to+\infty.
\end{equation}

 As far as the second term of \eqref{intseq} is concerned, the almost everywhere convergence in $(0,L)$ of $u_{n'}$ stated in \eqref{convweak} and the fact that $\phi_n$ is a sequence of reasonable approximations of $\phi$ immediately imply (see \eqref{phincond2}) that 
\begin{equation}
\label{convphin}
\phi_{n'}(u_{n'}(x))\to \phi(u(x))\,\, \mbox{ a.e. }\,\, x\in(0,L),\,\, \mbox{ as } {n'}\to+\infty.
\end{equation}
Observe however that this almost everywhere convergence does not allow one to pass to the limit in the term 
$$
\int_0^L \phi_{n'}(u_{n'}) \frac{dz}{dx}\,dx,
$$
since the a.e. convergence of $\phi_{n'}(u_{n'})$ is not sufficient to imply the convergence of the integrals.

Observe that up to now, we could have obtained results similar to \eqref{intseq}-\eqref{convphin} in an $N$-dimensional setting.

\subsection{A new a priori estimate due to the one-dimensional setting}
\label{sub32}

We will now prove a new a priori estimate which is specific to the one-dimensional case, see assumption \eqref{cond1}.

\begin{lemma}\label{weakconvergencephin}
Assume that \eqref{cond1} holds true, and that the data $(a,g,\phi)$ and  $(a_n,g_n,\phi_n)$ satisfy hypotheses \eqref{a1}-\eqref{condphi1} and \eqref{p3}, and \eqref{sequence}-\eqref{phin} and \eqref{phinconv}. If a subsequence, denoted by $u_{n'}$, satisfies \eqref{intseq} and \eqref{convweak} for some $u\in\huzu$,  and if 
\begin{equation}
\label{unotzero}
u\not=0,
\end{equation}
then
\begin{equation}
\label{phinbound}
\phi_{n'}(u_{n'}) \,\, \mbox{ is bounded in }\,\, L^2(0,L), 
\end{equation}
and
\begin{equation}
\label{phincweak}
\phi_{n'}(u_{n'})\rightharpoonup  \phi(u) \,\, \mbox{ weakly in  }\,\, L^2(0,L).
\end{equation}
\end{lemma}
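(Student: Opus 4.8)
The plan is to reduce everything to a uniform scalar bound on the constants $c_{n'}$ appearing in the first order form of the equation. Since each $\phi_{n'}$ belongs to $C^0(\re)$, Proposition~\ref{remequiv} applied to the data $(a_{n'},g_{n'},\phi_{n'})$ rewrites \eqref{intseq} as $\dys a_{n'}(x)\frac{du_{n'}}{dx}=\phi_{n'}(u_{n'})+g_{n'}+c_{n'}$, with $c_{n'}$ given by the averaged formula in \eqref{form-def4}. Hence $\dys\phi_{n'}(u_{n'})=a_{n'}\frac{du_{n'}}{dx}-g_{n'}-c_{n'}$, and since $\dys\big\|a_{n'}\frac{du_{n'}}{dx}\big\|_{L^2(0,L)}\le\beta\,\frac{c_0}{\alpha}$ by \eqref{propdun} and \eqref{a1}, while $\|g_{n'}\|_{L^2(0,L)}\le c_0$ by \eqref{propgn}, the $L^2$-bound \eqref{phinbound} is equivalent to the scalar bound $|c_{n'}|\le C$. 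I would also record two facts used throughout: by the compact embedding of $\huzu$ into $C^0([0,L])$, the convergence \eqref{convweak} upgrades to $u_{n'}\to u$ uniformly on $[0,L]$; and, fixing a small $\delta>0$, property \eqref{phincond4} gives $\phi_{n'}\ge0$ on $[-\delta,\delta]$ for $n'$ large, while \eqref{phincond3} gives $|\phi_{n'}|\le C_\delta$ on $\{\delta\le|s|\le M\}$, where $M=\frac{\sqrt L}{\alpha}c_0$ bounds all $\|u_{n'}\|_{L^\infty(0,L)}$ by Morrey's inequality \eqref{rem210}.

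For the upper bound on $c_{n'}$ I would use the explicit formula directly. Splitting $\dys\int_0^L\frac{\phi_{n'}(u_{n'})}{a_{n'}}\,dx$ over $\{|u_{n'}|\le\delta\}$, where the integrand is nonnegative, and over $\{|u_{n'}|>\delta\}$, where it is bounded by $C_\delta/\alpha$, shows that this integral is bounded below; since $\dys\int_0^L\frac{g_{n'}}{a_{n'}}\,dx$ is bounded and $\dys\int_0^L\frac{1}{a_{n'}}\,dx\in[L/\beta,L/\alpha]$, the formula in \eqref{form-def4} yields $c_{n'}\le C$.

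The lower bound is the heart of the matter and the one place where both $N=1$ and the hypothesis \eqref{unotzero} are indispensable; this is the new one-dimensional estimate, and I expect it to be the main obstacle. I would argue by contradiction, assuming $c_{n'}\to-\infty$ along a further subsequence (still denoted $n'$). Since $u\not\equiv0$, say $M_0:=\max_{[0,L]}u>0$ (the case $\min u<0$ being symmetric), uniform convergence gives points $\xi_{n'}$ with $u_{n'}(\xi_{n'})=\max u_{n'}\ge M_0/2$ for $n'$ large. Choosing $\delta<M_0/4$ and setting $\sigma_{n'}:=\sup\{x<\xi_{n'}:\ u_{n'}(x)\le\delta\}$, one has $u_{n'}(\sigma_{n'})=\delta$ and $\delta\le u_{n'}\le M$ on $(\sigma_{n'},\xi_{n'})$, so $\phi_{n'}(u_{n'})$ is bounded there by $C_\delta$. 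The quantitative key is that the length $\ell_{n'}:=\xi_{n'}-\sigma_{n'}$ is bounded below: from $\dys\frac{M_0}{4}\le u_{n'}(\xi_{n'})-u_{n'}(\sigma_{n'})=\int_{\sigma_{n'}}^{\xi_{n'}}\frac{du_{n'}}{dx}\,dx\le\sqrt{\ell_{n'}}\,\Big\|\frac{du_{n'}}{dx}\Big\|_{L^2(0,L)}\le\sqrt{\ell_{n'}}\,\frac{c_0}{\alpha}$, where the uniform $H^1$ bound \eqref{propdun} enters through Cauchy--Schwarz, one gets $\ell_{n'}\ge\ell_0>0$. Integrating the pointwise identity $\dys\frac{du_{n'}}{dx}=\frac{\phi_{n'}(u_{n'})+g_{n'}+c_{n'}}{a_{n'}}$ over $(\sigma_{n'},\xi_{n'})$ then gives $\dys\frac{M_0}{4}\le\frac{C_\delta L}{\alpha}+\frac{\sqrt L\,c_0}{\alpha}+c_{n'}\int_{\sigma_{n'}}^{\xi_{n'}}\frac{dx}{a_{n'}}$, and since $c_{n'}<0$ and $\dys\int_{\sigma_{n'}}^{\xi_{n'}}\frac{dx}{a_{n'}}\ge\ell_0/\beta>0$, the right-hand side tends to $-\infty$, a contradiction. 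Hence $c_{n'}$ is bounded below, and together with the previous step $|c_{n'}|\le C$, which proves \eqref{phinbound}.

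Finally, for \eqref{phincweak} I would combine the bound \eqref{phinbound} with the a.e. convergence \eqref{convphin}: every subsequence of $\phi_{n'}(u_{n'})$ has a weakly convergent (in $L^2$) sub-subsequence, and the standard identification argument (Egorov's theorem, or uniform integrability from the $L^2$ bound together with Vitali's theorem) forces its weak limit to coincide with the a.e. limit $\phi(u)$. As this limit does not depend on the extracted subsequence, the whole sequence satisfies $\phi_{n'}(u_{n'})\rightharpoonup\phi(u)$ weakly in $L^2(0,L)$, which is \eqref{phincweak}.
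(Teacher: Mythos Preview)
Your proof is correct, but it takes a more roundabout path than the paper's. Both arguments hinge on bounding the constants $c_{n'}$ in the first-order formulation $a_{n'}\frac{du_{n'}}{dx}=\phi_{n'}(u_{n'})+g_{n'}+c_{n'}$, but the paper gets both the upper and lower bounds in a single stroke. Since $u\not\equiv 0$, there is a point $x_0$ with $u(x_0)\ne 0$; by the uniform convergence $u_{n'}\to u$ in $C^0([0,L])$ (which you already recorded), there is a \emph{fixed} interval $[x_0-\delta,x_0+\delta]$ on which $\eta/2\le u_{n'}\le M$ for all large $n'$. On this fixed interval $\phi_{n'}(u_{n'})$ is bounded in $L^\infty$ by \eqref{phincond3}, while $a_{n'}\frac{du_{n'}}{dx}$ and $g_{n'}$ are bounded in $L^2$; reading $c_{n'}=a_{n'}\frac{du_{n'}}{dx}-\phi_{n'}(u_{n'})-g_{n'}$ as an identity in $L^2(x_0-\delta,x_0+\delta)$ shows that the \emph{constant} $c_{n'}$ is bounded in $L^2$ of an interval of fixed positive length, hence bounded in $\re$. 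No separation into upper/lower bounds, no sign analysis, no contradiction argument, no moving intervals.

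Your route works too: the upper bound via the explicit formula \eqref{form-def4} together with $\phi_{n'}\ge 0$ on $[-\delta,\delta]$ is a valid alternative, and your lower bound via the interval $(\sigma_{n'},\xi_{n'})$ with the Cauchy--Schwarz length estimate $\ell_{n'}\ge \ell_0$ is correct and self-contained. The advantage of your argument is that it never relies on finding a fixed interval where $u_{n'}$ stays away from zero; the price is that you do twice the work, introduce moving endpoints, and need the positivity of $\phi_{n'}$ near zero---none of which the paper requires. Your final step (a.e.\ convergence plus $L^2$ bound implies weak $L^2$ convergence) matches the paper's exactly.
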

\begin{proof}[\bf Proof]

\noindent{\bf Step 1.} We will strongly use in the present proof the assumption that $N=1$ in two ways.  

First by using Morrey's embedding theorem (see \eqref{Morrey}) which asserts that,  when $N=1$, then $H^1_0 (0,L)\subset C^{0,\frac12}([0,L])$, so that,   in view of \eqref{convweak},  that 
 \begin{equation}
\label{convunif}
u_{n'}\to u \,\,\mbox{ uniformly in  }  C^0([0,L]), \,\,\mbox{ as } {n'}\to+\infty.
\end{equation}

And, second,  by using the characterization of a weak solution of problem \eqref{pb1} in the sense of Definition~\ref{defin} given in Proposition~\ref{remequiv}, see \eqref{form-def3}; applied to $u_n$, \eqref{form-def3} implies that 
 \begin{equation}\label{form-defsec}
\begin{cases}
\dys u_n\in \huzu,\,\, \phi_n(u_n)\in L^2(0,L), \\ 
 \dys \exists \,\, c_n\in\re, \,\, a_n(x)\frac{du_n}{dx}=\phi_n(u_n)+g_n(x)+c_n\ \  \text{in} \ \ \mathcal{D}'(0,L)\,;
\end{cases}
\end{equation}
observe that here $\phi_n\in C^0(\re)$,  and that $u_n\in\huzu\subset L^\infty(0,L)$,   so that $\phi_n(u_n)$ ``automatically"  belongs to $L^\infty(0,L)\subset L^2(0,L)$ for each $n$.

\noindent{\bf Step 2.} If  we assume that $u\not=0$ (hypothesis \eqref{unotzero}), there exists at least one\break $x_0\in(0,L)$, such that
 \begin{equation}
 \label{unotzero2}
 u(x_0)\not=0.
 \end{equation}
 Let us assume for a moment that 
  \begin{equation}
 \label{unotzero3}
 u(x_0)>0.
 \end{equation}
 (the proof will be similar in the case where $u(x_0)<0$).
 
 Since $u\in\huzu\subset C^0([0,L])$, \eqref{unotzero3} implies that there exists some $\delta>0$ and $\eta>0$ such that 
  \begin{equation}\label{deltaeta}
 0<x_0-\delta<x_0<x_0+\delta<L,
\mbox{ with } \,\, u(x)\geq \eta, \,\,\,\,\forall x\in[x_0-\delta,x_0+\delta],
\end{equation}
 and the uniform convergence \eqref{convunif} implies that
  \begin{equation}\label{etalarge}
\mbox{for $n'$ sufficiently large, } \,\, 
\forall x\in[x_0-\delta,x_0+\delta] \,\,\mbox{ one has } \,\,  u_{n'}(x)\geq \frac{\eta}{2}.
\end{equation}
 
 On the other hand, in view of  the Morrey's inequality \eqref{rem210}, and of  \eqref{propdun} and \eqref{propgn},  
 we  have
 \begin{equation}
 \label{estimun}
 \|u_n\|_{L^\infty(0,L)}\leq \sqrt{L} \left\|\frac{du_n}{dx} \right\|_{L^2(0,L)} \leq \frac{\sqrt{L}}{\alpha} \|g_n\|_{L^2(0,L)}\leq \frac{\sqrt{L}}{\alpha}c_0,
 \end{equation}
therefore one has in view of \eqref{etalarge}
  \begin{equation}
 \label{nlarge}
\mbox{for $n'$ sufficiently large, } \,\,
\forall x\in[x_0-\delta,x_0+\delta] \quad \frac{\eta}{2}\leq u_{n'}(x)\leq \frac{\sqrt{L}}{\alpha}c_0.
 \end{equation}
 Since $\phi_n$, which is a reasonable approximation of $\phi$, satisfies
 \begin{equation}
\label{convphinunif}
\phi_n\to \phi\,\, \mbox{ uniformly in }\,\, C^0\left(\left[\frac{\eta}{2},\frac{\sqrt{L}}{\alpha}c_0\right]\right),
\end{equation}
see \eqref{phincond3}; we deduce that 
 \begin{equation}
 \label{nlargephin}
\mbox{for $n'$ sufficiently large, } \,\, \phi_{n'}\,\, \mbox{ is bounded in } \,\, L^{\infty}(x_0-\delta,x_0+\delta).
\end{equation}

From \eqref{form-defsec}, \eqref{sequence}, \eqref{propdun}, \eqref{propgn}, and \eqref{nlargephin}, we deduce that 
  \begin{equation}
 \label{boundedcn}
\mbox{for $n'$ sufficiently large, } \,\, c_{n'}\,\, \mbox{ is bounded in } \,\, L^{2}(x_0-\delta,x_0+\delta),
\end{equation}
 which implies that
  \begin{equation}
 \label{boundedcninr}
\mbox{for $n'$ sufficiently large, } \,\, c_{n'}\,\, \mbox{ is bounded in } \,\, \re.
\end{equation}

Turning back to \eqref{form-defsec}, the estimates \eqref{sequence}, \eqref{propdun}, \eqref{propgn}, and \eqref{boundedcninr} together imply that 
 \begin{equation}
 \label{phinboundedr}
\mbox{for $n'$ sufficiently large, } \,\, \phi_{n'}(u_{n'})\,\, \mbox{ is bounded in } \,\, L^2(0,L).
\end{equation}
We have proved that \eqref{phinbound} holds true.

The weak convergence \eqref{phincweak} of $\phi_{n'}(u_{n'})$ to $\phi(u)$ then results from \eqref{phinboundedr} and \eqref{convphin}, since a bounded sequence $z_n$ in $L^p(\Omega)$ which converges a.e. in $\Omega$ to some  $z$ also converges to $z$ weakly in $L^p(\Omega)$ when $1<p<+\infty$ (this results from Vitali's theorem since the sequence $z_n$ is equi-integrable in $L^1(\Omega)$, and therefore it converges strongly in $L^1(\Omega)$). 

Lemma~\ref{weakconvergencephin} is proved.
 
\end{proof}

\begin{remark}
\label{31bis}
As we said at  the beginning of its  Step~1, the proof of Lemma~\ref{weakconvergencephin} strongly uses the assumption $N=1$. 

 On the other hand, the proof given in the second step is very surprising, since it consists to transform the local estimate \eqref{nlargephin}, which is only valid in $(x_0-\delta,x_0+\delta)$, into the global estimate \eqref{phinboundedr}, which is valid in $(0,L)$. This passing from local to global is also specific to the dimension $N=1$.

\qed
\end{remark}

\begin{remark}
\label{31ter}

It is assumed in hypothesis \eqref{propgn} that $g_n$ converges weakly to g in $L^2(0,L)$. If this hypothesis is reinforced in 
\begin{equation}
\label{326bis}
g_n\rightarrow g \,\, \mbox{ strongly in } \, L^2(0,L),
\end{equation}
then the weak convergence \eqref{convweak} is reinforced in  
\begin{equation}
\label{326ter}
u_{n'}\rightarrow u \,\,\mbox{ strongly in } \,\, \huzu. 
\end{equation}

Indeed once a subsequence $n'$ has been extracted for which one has \eqref{convweak} for some $u\in\huzu$, one has 
\begin{equation}
\label{326quarto}
\int_0^L g_{n'} \frac{du_{n'}}{dx}\, dx\to \int_0^L g \frac{du}{dx}\, dx, \,\, \mbox{ as } \,\, {n'}\to+\infty.
\end{equation}
when the strong convergence \eqref{326bis} holds true. Then either $u\equiv 0$, in which case \eqref{35bis} implies that
$$
u_{n'}\rightarrow 0 \,\,\mbox{ strongly in } \,\, \huzu, 
$$
or $u\not\equiv 0$, in which case, in view of Theorem \ref{maintheo} (Alternative) below, $u$ is a weak solution of problem \eqref{pb1} in the sense of Definition~\ref{defin}, which therefore satisfies the energy equality \eqref{vf2bis} in view of Proposition~\ref{28bis}. Passing to the limit in \eqref{35bis} and using \eqref{326quarto}  proves that
$$
\int_0^L a_{n'}(x) \frac{d u_{n'}}{dx} \frac{du_{n'}}{dx}\,dx\to \int_0^L a(x) \frac{d u}{dx} \frac{du}{dx}\,dx, \,\, \mbox{ as } \,\, {n'}\to+\infty.
$$
which with the weak convergence \eqref{convweak} implies the strong convergence \eqref{326ter} by passing to the limit in 
\begin{equation*}
\begin{split}
\dys \alpha \int_0^L \left| \frac{d u_{n'}}{dx}-\frac{d u}{dx}\right|^2 dx&\leq \int_0^L a_{n'}(x)\left( \frac{d u_{n'}}{dx}-\frac{d u}{dx}\right)\left( \frac{d u_{n'}}{dx}-\frac{d u}{dx}\right)dx=\\
\dys&= \int_0^L a_{n'}(x)\frac{d u_{n'}}{dx}\frac{d u_{n'}}{dx} \, dx - 2 \int_0^L a_{n'}(x)\frac{d u}{dx}\frac{d u_{n'}}{dx} \, dx+\\& + \int_0^L a_{n'}(x)\frac{d u}{dx}\frac{d u}{dx} \, dx. 
\end{split}
\end{equation*}

\qed
\end{remark}

\subsection{An alternative}

From the results obtained in Subsection~\ref{subapprox}, and from Lemma~\ref{weakconvergencephin} of Subsection~\ref{sub32}, we deduce that we are in front of {\it an alternative}:
\begin{theorem}[{{\bf Alternative}}]\label{maintheo}
Assume that hypothesis \eqref{cond1} holds true, and that the data $(a, g, \phi)$  satisfy hypotheses \eqref{a1}-\eqref{condphi1} and \eqref{p3}. Consider approximations $(a_n, g_n, \phi_n)$ which satisfy \eqref{sequence}-\eqref{phin} and \eqref{phinconv}. 

Then for every $n\in \mathbb{N}$  there exists at least one function $u_n$ which satisfies \eqref{intseq} and \eqref{propdun}. If one extract a subsequence, denoted by $u_{n'}$, such that \eqref{convweak} holds for some $u\in \huzu$, then one has the alternative: 

\begin{itemize}
\item either $u\equiv 0$,
\item or $u$ is a weak solution of problem \eqref{pb1} in the sense of Definition~\ref{defin}. 
\end{itemize}

\end{theorem}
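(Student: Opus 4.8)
The plan is to observe that essentially all the analytic work has already been carried out in Subsection~\ref{subapprox} and, crucially, in Lemma~\ref{weakconvergencephin}, so that the proof reduces to assembling these ingredients and splitting into two cases according to whether the weak limit $u$ vanishes identically or not. The existence, for each $n$, of a function $u_n$ satisfying \eqref{intseq} together with the a priori bound \eqref{propdun} is Proposition~\ref{propexis} applied to the admissible data $(a_n,g_n,\phi_n)$ (admissible since \eqref{phinconv} gives $\phi_n\in C^0(\re)$), and the convergent subsequence $u_{n'}$ of \eqref{convweak} is provided by the hypothesis of the statement. From Subsection~\ref{subapprox} the first and last terms of the approximate identity \eqref{intseq} already pass to the limit along $n'$, giving \eqref{propintn}, that is, convergence towards $\int_0^L a(x)\frac{du}{dx}\frac{dz}{dx}\,dx$ and $\int_0^L g\frac{dz}{dx}\,dx$ for every fixed test function $z\in\huzu$. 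The only recalcitrant term is the singular middle term $\int_0^L \phi_{n'}(u_{n'})\frac{dz}{dx}\,dx$, since the pointwise convergence \eqref{convphin} of $\phi_{n'}(u_{n'})$ to $\phi(u)$ does not, by itself, justify passing to the limit under the integral sign.

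If $u\equiv 0$, the first branch of the alternative holds and there is nothing more to prove. The substance of the argument is therefore the case $u\not\equiv 0$, i.e.\ hypothesis \eqref{unotzero}. Here I would invoke Lemma~\ref{weakconvergencephin}, whose hypotheses are exactly \eqref{intseq}, \eqref{convweak} and \eqref{unotzero}: it delivers both the $L^2$-bound \eqref{phinbound} on $\phi_{n'}(u_{n'})$ and the weak convergence \eqref{phincweak}, namely $\phi_{n'}(u_{n'})\rightharpoonup\phi(u)$ in $L^2(0,L)$. In particular $\phi(u)\in L^2(0,L)$, which is precisely the integrability required in the first line of Definition~\ref{defin}. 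Since $\frac{dz}{dx}\in L^2(0,L)$ for every fixed $z\in\huzu$, the weak $L^2$-convergence \eqref{phincweak} now lets me pass to the limit in the troublesome middle term, obtaining $\int_0^L \phi_{n'}(u_{n'})\frac{dz}{dx}\,dx\to\int_0^L \phi(u)\frac{dz}{dx}\,dx$.

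Combining this with \eqref{propintn} and letting $n'\to+\infty$ in \eqref{intseq}, I obtain the variational identity \eqref{vf} for every $z\in\huzu$. Together with $u\in\huzu$ and $\phi(u)\in L^2(0,L)$, this is exactly the assertion that $u$ is a weak solution of problem \eqref{pb1} in the sense of Definition~\ref{defin}, which is the second branch of the alternative.

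The only genuinely delicate point is the upgrade from the a.e.\ convergence \eqref{convphin} to the weak $L^2$-convergence \eqref{phincweak}, equivalently the new one-dimensional $L^2$-estimate on $\phi_{n'}(u_{n'})$, and this difficulty is entirely absorbed into Lemma~\ref{weakconvergencephin}. Once that lemma is in hand, the proof of the alternative is a routine assembly of the convergences above, and I expect no further obstacle; in particular no strong convergence of $\phi_{n'}(u_{n'})$ is needed, weak $L^2$-convergence being exactly what is required to test against the fixed $L^2$ function $\frac{dz}{dx}$.
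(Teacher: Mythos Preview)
Your proposal is correct and follows essentially the same approach as the paper. The paper itself does not give a detailed proof of Theorem~\ref{maintheo}, but simply states that it follows ``from the results obtained in Subsection~\ref{subapprox}, and from Lemma~\ref{weakconvergencephin} of Subsection~\ref{sub32}''; your write-up is precisely the explicit unpacking of this sentence, using Proposition~\ref{propexis} for existence of $u_n$, \eqref{propintn} for the outer terms, and Lemma~\ref{weakconvergencephin} in the case $u\not\equiv 0$ to handle the singular term via weak $L^2$-convergence.
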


\begin{remark}
Let us emphasize that in the whole of the present section, and in particular in Theorem~\ref{maintheo}, we have assumed hypothesis \eqref{p3}, namely $\phi(0)=+\infty$. If we do not make this hypothesis, but $\phi(0)<+\infty$, we have $\phi\in C^0(\re)$ and we are in the hypotheses of Proposition~\ref{propexis} with reasonable approximations $\phi_n$ which converge uniformly on $C^0([-R,+R])$ for every $R<+\infty$. In this classical   setting,  $u\equiv0$ can be  a solution\footnote{As a side remark, note that when $\phi\in C^0(\re)$ (or in other terms when $\phi(0)<+\infty$), $u\equiv 0$ is a classical weak solution of problem \eqref{pb1} if and only if $g$ is constant (see \eqref{form-defbis}).}, and there is no alternative: all the converging subsequences $u_{n'}$ converge to a classical weak solution, see the proof of Proposition~\ref{propexis}.

In Theorem~\ref{maintheo} the alternative is indeed due to the fact that $\phi$ is singular in $s=0$.

\qed
\end{remark}

Let us complete Theorem~\ref{maintheo} by a result which characterizes the behaviour of the constant $c_n$ which appears in \eqref{form-defsec}, and also the behaviour of $\phi_n(u_n)$.

To this aim  observe that, since $c_n\in\re$ for any given $n$, but without any bound on $|c_n|$, we are at liberty to extract from $n'$ a further subsequence denoted by $n''$ such that
{\small\begin{equation}
\label{3.100}
\exists \,\tilde{c}\in \re\cup\{+\infty\}\cup\{-\infty\}\,\mbox{ such that } \, c_{n''}\to \tilde{c}\,\, \mbox{ in } \,\re\cup\{+\infty\}\cup\{-\infty\}, \, \mbox{ as } n''\to+\infty. 
\end{equation}}

We then have the following result, which describes the links between the possible limits of $u_n$, $\phi_n(u_n)$, and $c_n$.
\begin{proposition} 
\label{prop35}
Assume that hypothesis \eqref{cond1} holds true, and that the data $(a, g, \phi)$  satisfy hypotheses \eqref{a1}-\eqref{condphi1} and \eqref{p3}. Consider approximations $(a_n, g_n, \phi_n)$ which satisfy \eqref{sequence}-\eqref{phin} and \eqref{phinconv}.  For $\tilde{c}$ and $n''$ defined by \eqref{3.100}, we have the following equivalences:
\begin{equation}
\label{3.101}
\begin{cases}
\dys u=0\Leftrightarrow \tilde{c}=-\infty\Leftrightarrow\\
\dys \Leftrightarrow   \forall M\in \re,\,\, \phi_{n''}(u_{n''}(x))\geq M, \,\, \forall x\in(0,L),\,\, \mbox{ for $n''$ sufficiently large} \dys \Leftrightarrow\\
\dys \Leftrightarrow \phi_{n''}(u_{n''}(x))\to +\infty \,\,\mbox{ uniformly in  }  [0,L], \,\,\mbox{ as } {n''}\to+\infty.
\end{cases}
\end{equation}
\begin{equation}
\label{3.102}
\begin{cases}
\dys u\not=0\Leftrightarrow -\infty<\tilde{c}<+\infty\Leftrightarrow\\
\dys \Leftrightarrow    \phi_{n''}(u_{n''})\rightharpoonup\phi(u) \,\, \mbox{ weakly in } L^2(0,L), \,\,\mbox{ as } \,\, n''\to +\infty.
\end{cases}
\end{equation}
\end{proposition}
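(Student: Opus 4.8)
The two displayed chains \eqref{3.101}--\eqref{3.102} simply split according to whether the weak limit $u$ vanishes or not, so the plan is to prove a single cyclic chain of implications and read off both. First note that the second and third conditions in \eqref{3.101} are literally the same statement: ``$\phi_{n''}(u_{n''}(x))\to+\infty$ uniformly on $[0,L]$'' means precisely that for every $M$ one has $\phi_{n''}(u_{n''}(x))\ge M$ for all $x$ and all $n''$ large. The three genuinely distinct assertions to be linked are therefore $u=0$, $\tilde c=-\infty$, and the uniform blow-up of $\phi_{n''}(u_{n''})$. The tools I would use are: the explicit formula for $c_n$ coming from Proposition~\ref{remequiv} (the third line of \eqref{form-def4} applied to the data $(a_{n},g_{n},\phi_{n})$), the defining property \eqref{phincond4} of a reasonable approximation, the uniform convergence $u_{n'}\to u$ in $C^0([0,L])$ furnished by Morrey's embedding \eqref{Morrey} together with \eqref{convweak}, and, crucially, Lemma~\ref{weakconvergencephin}.

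I would first prove $u=0\Rightarrow$ uniform blow-up. If $u\equiv 0$ then $u_{n'}\to 0$ uniformly on $[0,L]$. Fix $M\in\re$; by \eqref{phincond4} there is $\eta>0$ with $\liminf_{n}\big(\inf_{t\in[-\eta,+\eta]}\phi_{n}(t)\big)>M$, hence $\inf_{t\in[-\eta,+\eta]}\phi_{n''}(t)>M$ for $n''$ large. Since $\|u_{n''}\|_{L^\infty(0,L)}\le\eta$ for $n''$ large, this gives $\phi_{n''}(u_{n''}(x))\ge \inf_{t\in[-\eta,+\eta]}\phi_{n''}(t)>M$ for all $x$, i.e. the uniform blow-up. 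Next, uniform blow-up $\Rightarrow\tilde c=-\infty$: in the formula for $c_{n''}$ the denominator $\int_0^L a_{n''}^{-1}$ lies in $[L/\beta,L/\alpha]$ and the term $\int_0^L g_{n''}a_{n''}^{-1}$ stays bounded (by \eqref{propgn} and $a_{n''}\ge\alpha$), while $\int_0^L \phi_{n''}(u_{n''})a_{n''}^{-1}\to+\infty$ (once $\phi_{n''}(u_{n''})\ge M>0$ the integrand is $\ge M/\beta$); the minus sign in \eqref{form-def4} then forces $c_{n''}\to-\infty$, so $\tilde c=-\infty$.

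It remains to close the cycle with $\tilde c=-\infty\Rightarrow u=0$, and here Lemma~\ref{weakconvergencephin} does the work: if $u\neq 0$ then by \eqref{phinbound} the sequence $\phi_{n'}(u_{n'})$ is bounded in $L^2(0,L)\subset L^1(0,L)$, so the numerator in the formula for $c_{n'}$ is bounded and $c_{n'}$ is bounded in $\re$; thus every limit $\tilde c$ is finite. The contrapositive yields $\tilde c=-\infty\Rightarrow u=0$, closing the loop and proving all equivalences in \eqref{3.101}. The same boundedness shows $\tilde c=+\infty$ cannot occur when $u\neq 0$, so the negation $u\neq 0$ is equivalent to $-\infty<\tilde c<+\infty$; combined with the weak convergence \eqref{phincweak} of Lemma~\ref{weakconvergencephin}, this gives \eqref{3.102}. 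The only genuinely non-trivial ingredient is Lemma~\ref{weakconvergencephin}, whose one-dimensional local-to-global estimate is exactly what forbids $\tilde c$ from being infinite when $u\neq 0$; everything else is bookkeeping with the $c_n$ formula and the blow-up property \eqref{phincond4}.
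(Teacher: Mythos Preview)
Your proof is correct and follows essentially the same approach as the paper: both rely on the uniform convergence $u_{n'}\to u$ in $C^0([0,L])$ from Morrey's embedding, property \eqref{phincond4} to force uniform blow-up when $u=0$, and Lemma~\ref{weakconvergencephin} to bound $c_{n'}$ when $u\neq 0$. The only cosmetic differences are that you organize the argument as a single cyclic chain $u=0\Rightarrow\text{blow-up}\Rightarrow\tilde c=-\infty\Rightarrow u=0$ and then read off \eqref{3.102} by negation, whereas the paper proves the two one-sided chains \eqref{3.103} and \eqref{3.112} separately and then invokes the dichotomy ``either $u=0$ or $u\neq 0$'' explicitly; and you deduce $\tilde c=-\infty$ from the integrated formula \eqref{form-def4} for $c_n$, while the paper uses the pointwise relation \eqref{form-defsec} directly---these are equivalent manipulations of the same identity.
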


\begin{remark}
Since the limit $u$ of a subsequence $u_{n'}$ can only be equal to $u=0$, or to $u\not=0$, taking into account the  equivalences in \eqref{3.101} and in \eqref{3.102}, one sees that the limit $\tilde{c}$ of a subsequence $c_{n''}$ can never be equal to $+\infty$ (but  only  be either   finite or equal to $-\infty$), and that the limit $\phi(u)$ of a subsequence $\phi_{n''}(u_{n''})$ can only be equal to $+\infty$, or to $\phi(u)$ for $u$ a weak solution of problem \eqref{pb1} in the sense of Definition~\ref{defin}.

\qed
\end{remark}

\begin{proof}[\rm{\bf{Proof of Proposition~\ref{prop35}}}]

\noindent{\bf Step 1: The case $u\not =0$.} In this case, we have proved  in Step 2 of the proof of Lemma~\ref{weakconvergencephin} that (see \eqref{boundedcninr}-\eqref{phinboundedr})
$$
u\not=0\Rightarrow c_{n''}\,\, \mbox{ is bounded in} \,\, \re\Rightarrow \phi_{n''}(u_{n''})\,\, \mbox{ is bounded in } \,\, L^2(0,L),
$$
which implies that (see the last paragraph of the proof of Lemma~\ref{weakconvergencephin})
\begin{equation}
\label{3.103}
\begin{cases}
\dys u\not=0\Rightarrow -\infty<\tilde{c}<+\infty\Rightarrow\\
\dys \Rightarrow    \phi_{n''}(u_{n''})\rightharpoonup\phi(u) \,\, \mbox{ weakly in } L^2(0,L), \,\,\mbox{ as } \,\, n''\to +\infty.
\end{cases}
\end{equation}

\noindent{\bf Step 2: The case $u= 0$.} In view of \eqref{convunif} $u_{n''}$ converges to $0$ uniformly in $C^0([0,L])$, and therefore 
$$
\forall \eta>0,\,\,\,\,|u_{n''}(x)|\leq \eta,\,\,\,\, \forall x\in[0,L]  \,\,\,\,   \mbox{ for $n''$ sufficiently large, } \,\, 
$$
which implies that
\begin{equation}
\label{3.106}
\forall \eta>0,\,\,\,\, \phi_{n''}(u_{n''}(x))\geq \inf_{t\in[ -\eta,+\eta]} \phi_{n''}(t) \,\,\,\, \forall x\in[0,L]  \,\,   \mbox{ for $n''$ sufficiently large}.
\end{equation}

On the other hand, in the second part  of Step 2 of the proof of Proposition~\ref{rem29}, we have proved, see \eqref{2.110}, that 
\begin{equation*}
\forall \eta>0,\,\,\,\, \liminf_{n''}\left\{\inf_{t\in[ -\eta,+\eta]} \phi_{n''}(t)\right\}= \inf_{t\in[ -\eta,+\eta]} \phi(t), 
\end{equation*}
which implies that for every $\dys k<\inf_{t\in[ -\eta,+\eta]}\phi(t)$, one has
$$
 \inf_{t\in[ -\eta,+\eta]} \phi_{n''}(t)\geq k, \,\,\,\,   \mbox{ for $n''$ sufficiently large}.
$$
Since it results from \eqref{condphi1}-\eqref{p3} that
$$
 \inf_{t\in[ -\eta,+\eta]} \phi(t)\to \phi(0)=+\infty, \quad \mbox{ as } \eta\to 0,
$$
we have proved that 
\begin{equation}
\label{3.110}
 \forall M\in \re,\,\,  \inf_{t\in[ -\eta,+\eta]}\phi_{n''}(t)\geq M, \,\, \mbox{ for $n''$ sufficiently large}.
\end{equation}

Combining \eqref{3.106} and \eqref{3.110} we have proved that
\begin{equation}
\label{3.111}
\dys u=0\Rightarrow   \forall M\in \re,\,\, \phi_{n''}(u_{n''}(x))\geq M, \,\, \forall x\in(0,L),\,\, \mbox{ for $n''$ sufficiently large}.
\end{equation}

On the other hand, in view of \eqref{form-defsec} one has 
$$
c_{n''}+\phi_{n''}(u_{n''})=a_{n''}(x)\frac{du_{n''}}{dx}-g_{n''}(x) \quad \mbox{ in } L^2(0,L),
$$
which combined with \eqref{3.111} implies that 
$$
\forall M\in \re,\quad c_{n''}+M\leq a_{n''}(x)\frac{du_{n''}}{dx}-g_{n''}(x) \quad \mbox{ in } L^2(0,L) ,\,\, \mbox{ for $n''$ sufficiently large}.
$$
Since the right-hand side of this inequality is bounded in $L^2(0,L)$ in view of \eqref{sequence}-\eqref{propgn} and \eqref{propdun}, integrating on $(0,L)$ and dividing by $L$ implies that there exists a constant $c_0<+\infty$ such that
$$
\forall M\in \re,\quad c_{n''}+M\leq c_0,\,\, \mbox{ for $n''$ sufficiently large},
$$
which implies that $c_{n''}\rightharpoonup-\infty$ as $n''\to +\infty$, or in other terms that $\tilde{c}=-\infty$.

We have proved that 
\begin{equation}
\label{3.112}
\begin{cases}
\dys u=0\Rightarrow \\\dys \Rightarrow  \forall M\in \re,\,\, \phi_{n''}(u_{n''}(x))\geq M, \,\, \forall x\in(0,L),\,\, \mbox{ for $n''$ sufficiently large}\Rightarrow\\
\Rightarrow \tilde{c}=-\infty.
\end{cases}
\end{equation}

\noindent{\bf Step 3: Proof of the two equivalences \eqref{3.101} and \eqref{3.102}.} We will deduce \eqref{3.101} and \eqref{3.102} from the two results \eqref{3.103} and \eqref{3.112}, and from the fact that when a subsequence $u_{n''}$ converges weakly to $u$ in $H_0^1(0,L)$ (see \eqref{convweak}), then one has the dicotomy ``either $u\not =0$ or $u=0$".

Indeed, when considering $\tilde{c}$, one deduces from \eqref{3.101} and \eqref{3.102} and from  the dicotomy ``either $u\not =0$ or $u=0$", that 
$$
\mbox{either $\tilde{c}$ is finite or $\tilde{c}=-\infty$,}
$$
and that one can not have $\tilde{c}=+\infty$.

Consider first the case when $\tilde{c}$ is finite; then use the dicotomy ``either $u\not=0$ or $u=0$": if $u=0$, then by \eqref{3.112} $\tilde{c}=-\infty$, which is not the case; therefore 
$$
\tilde{c} \mbox{ finite }\Rightarrow u\not=0.
$$
Consider then the case where $\tilde{c}=-\infty$; then use the dicotomy ``either $u\not=0$ or $u=0$": if $u\not=0$, then by \eqref{3.103} $\tilde{c}$ is finite, which is not the case; therefore 
$$
 \tilde{c}=-\infty \Rightarrow u=0.
$$

As far as $\phi_{n''}(u_{n''})$ is concerned, one deduces from \eqref{3.101} and \eqref{3.102} and from the dicotomy ``either $u\not =0$ or $u=0$" that, as $n''\to+\infty$,
$$
\mbox{either } \phi_{n''}(u_{n''})\rightharpoonup  \phi(u) \,\, \mbox{ weakly in  }\,\, L^2(0,L) \mbox{ or }  \phi_{n''}(u_{n''})\to+\infty \mbox{ uniformly in } [0,L].
$$

A proof similar to the proof made just above for $\tilde{c}$ leads to 
$$
\phi_{n''}(u_{n''})\rightharpoonup  \phi(u) \,\, \mbox{ weakly in  }\,\, L^2(0,L), \mbox{ as } n''\to+\infty\Rightarrow u\not=0,
$$
and to
$$
 \phi_{n''}(u_{n''}(x))\to+\infty \mbox{ uniformly in } [0,L],  \mbox{ as } n''\to+\infty\Rightarrow u=0.
$$

This completes the proof of the two equivalences \eqref{3.101} and \eqref{3.102} and of Proposition~\ref{prop35}.

\end{proof}
\bigskip
At the end of this section, one could think that, except maybe in some very special cases, every limit $u$ of weak solutions of approximations of problem \eqref{pb1} which satisfy \eqref{sequence}-\eqref{phin} and \eqref{phinconv} is always a weak solution of problem \eqref{pb1} in the sense of Definition~\ref{defin}.

We will see in Section \ref{sec4} below that this is not the case, and that for a large class of functions $g\in L^2(0,L)$ (see Theorem 4.1), and for a large class of functions $\phi$ (see Theorem \ref{45}), every limit  of approximations is $u\equiv0$. This is unexpected.

We will also see in Section~\ref{Sec8} below that for an another large class of functions $\phi$ and for another large class of functions $g$, there exists at least one weak solution of problem \eqref{pb1} in the sense of Definition~\ref{defin}. This will be also unexpected.

\section{Non-existence results}\label{Sec4}\label{sec4}

\indent In this section we give two results of non-existence of a  weak solution of problem \eqref{pb1} in the sense of Definition  \ref{defin}.

Our first  non-existence result states, in particular, that there is  no weak solution of problem \eqref{pb1} in the sense of Definition  \ref{defin} when $g\in L^{\infty}(0,L)$. This result is obtained independently of the nonlinearity $\phi$, provided $\phi(0)=+\infty$, i.e  \eqref{p3} holds true.

\begin{theorem}[{{\bf Non-existence when $g$ is bounded from below}}]\label{36}
Assume that hypothesis \eqref{cond1} holds true, and that the data $(a, g, \phi)$  satisfy hypotheses \eqref{a1}-\eqref{condphi1} and \eqref{p3}. Assume moreover that exists  $M>0$ such that 
\begin{equation} \label{glim} g(x)\geq -M\,\ \ \text{for a.e.}\ x\in (0,L).\end{equation} 
\indent  Then it does not exist any  weak solution of problem \eqref{pb1} in the sense of Definition~\ref{defin}.  
\end{theorem}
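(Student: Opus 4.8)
The plan is to argue by contradiction using the equivalent ODE formulation of a weak solution. Suppose $u$ were a weak solution. By Proposition~\ref{remequiv}, $u\in H^1_0(0,L)$ with $\phi(u)\in L^2(0,L)$ satisfies, for the constant $c$ given in \eqref{form-def4},
\[
a(x)\,\frac{du}{dx}=\phi(u)+g+c\qquad\text{for a.e. }x\in(0,L),
\]
this equality holding a.e. because both sides lie in $L^2(0,L)$. Note that $c$ is genuinely finite, since $\phi(u)\in L^2(0,L)\subset L^1(0,L)$ and $1/a\le 1/\alpha$ make all the integrals in \eqref{form-def4} finite. Moreover Morrey's embedding \eqref{Morrey} gives $u\in C^0([0,L])$, and $u$ is absolutely continuous, so that $u(y)-u(x)=\int_x^y \frac{du}{dt}\,dt$. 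The heuristic I want to make rigorous is that near any zero of $u$ the right-hand side $\phi(u)+g+c$ is strictly positive, because $\phi(u)\to+\infty$ while $g+c$ stays bounded below; hence $\frac{du}{dx}>0$ there, so $u$ is strictly increasing across each of its zeros, which is incompatible with $u$ returning to $0$ at $x=L$.

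First I fix the constants. Since $g\ge -M$ a.e. by \eqref{glim} and $c$ is finite, choose $K>M-c$, so that $K-M+c>0$. By \eqref{rem0inf} ($\phi(s)\to+\infty$ as $s\to0$) there is $\eta>0$ with $\phi(s)\ge K$ for all $0<|s|\le\eta$; since $\phi(u)\in L^2(0,L)$ forces $u\ne 0$ a.e., it follows that $\phi(u(t))\ge K$ for a.e.\ $t$ at which $|u(t)|\le\eta$. Using $\alpha\le a\le\beta$ from \eqref{a1}, on any interval where $|u|\le\eta$ one then has
\[
\frac{du}{dt}=\frac{\phi(u(t))+g(t)+c}{a(t)}\ge\frac{K-M+c}{\beta}>0\qquad\text{for a.e. }t.
\]

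Next I locate the zeros. Applying this on $(0,x)$, where $|u|\le\eta$ for $x$ small by continuity and $u(0)=0$, the identity $u(x)=\int_0^x \frac{du}{dt}\,dt$ forces $u(x)\ge\frac{K-M+c}{\beta}\,x>0$, so $u>0$ on some interval $(0,\delta_0)$. Define $x^*=\inf\{x\in(0,L]:u(x)\le 0\}$; then $x^*\ge\delta_0>0$, the set is nonempty because $u(L)=0$, and by continuity $u>0$ on $(0,x^*)$ with $u(x^*)=0$. Choosing $\epsilon>0$ small enough that $|u|\le\eta$ on $[x^*-\epsilon,x^*]$, the identity $-u(x)=u(x^*)-u(x)=\int_x^{x^*}\frac{du}{dt}\,dt$ gives, for every $x\in(x^*-\epsilon,x^*)$,
\[
-u(x)\ge\frac{(K-M+c)(x^*-x)}{\beta}>0,
\]
that is $u(x)<0$, which contradicts $u>0$ on $(0,x^*)$. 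This contradiction shows that no weak solution of \eqref{pb1} can exist.

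The crux is organizing the sign bookkeeping so that the single elementary estimate $\phi(u)+g+c\ge K-M+c>0$ near a zero is exploited consistently at both ends: integrated forward from $x=0$ it forces $u$ to enter the positive region, while integrated backward from the first subsequent zero $x^*$ it forces $u$ to be negative just to the left of $x^*$, producing the contradiction. The delicate points to handle carefully are the finiteness of $c$, the fact that the zero set of $u$ is negligible so that $\phi(u)\ge K$ holds a.e.\ on the relevant intervals, and the use of absolute continuity to integrate the distributional identity. It is worth emphasizing that this argument never invokes any integrability of $\phi$ near $0$, in accordance with Theorem~\ref{36} being logically independent of Theorem~\ref{45}.
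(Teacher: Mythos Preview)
Your proof is correct and follows essentially the same route as the paper: both argue by contradiction from the ODE reformulation, use the fact that $\phi(u)+g+c$ is bounded below by a large positive constant near any zero of $u$ to force $du/dx>0$ there, and derive a contradiction between the behaviour of $u$ to the right of $x=0$ and to the left of the first subsequent zero. The paper packages the local estimate into a separate ``forbidden region'' lemma (Lemma~\ref{lemma36}) and then closes the argument via the intermediate value theorem, whereas you inline the estimate and obtain the contradiction one step more directly from the definition of $x^*$; the substance is the same.
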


\begin{remark}
Observe that Theorem \ref{36} implies that if, for a given nonlinearity $\phi$, $\hat{u}$ is a weak solution of problem \eqref{pb1} in the sense of Definition~\ref{defin} corresponding to a source term $g$ (we will see in Section \ref{Sec8} below that there exist many  data $(a,g,\phi)$ for which there exist weak solutions of problem \eqref{pb1} in the sense of Definition~\ref{defin}), one can not hope to approximate $\hat{u}$ by approximating $\hat{g}$ by any sequence $\hat{g}_n$ which approximate $\hat{g}$ (weakly or strongly) in $L^2(0,L)$: in view of Theorem \ref{36},  it is indeed sufficient to approximate $\hat{g}$ by a sequence $\hat{g}_n\in L^{\infty}(0,L)$ which converges to $g$ (weakly or even strongly) in $L^2 (0,L)$, since for those $\hat{g}_n$ there is no weak solution $u_n$ of problem \eqref{pb1} in the sense of Definition  \ref{defin} with the source term $g_n$. 

\end{remark}
\qed

The proof of Theorem \ref{36} will use the following  Lemma:
\begin{lemma}[{\bf{The forbidden region}}\rm]\label{lemma36}
Assume that hypothesis \eqref{cond1} holds true, and consider data $(a, l, \phi)$ which satisfy hypotheses \eqref{a1}-\eqref{condphi1} and \eqref{p3}. Let $w$ which satisfies 
\begin{equation}\label{formLemma}
\begin{cases}
\dys w\in H^1(0,L)\,,\,\,\, \phi(w)\in L^2(0,L),  \\ 
\dys a(x)\frac{dw}{dx}=\phi(w)+l\ \  \text{in} \ \ \mathcal{D}'(0,L)\,.
\end{cases}
\end{equation}
Let $A,B$ and $x_0$ be such that 
\begin{equation}\label{abx}
0\leq A<B\leq L,\,\  x_0\in [A,B], \,\  w(x_0)=0\,.  
\end{equation}
If $l$ satisfies 
\begin{equation} \label{gsat} \exists \, M>0, \, \, \ l(x)\geq -M\,\ \ \text{for a.e.}\ x\in [A,B]\,,\end{equation}
then one has 
\begin{equation}\label{FC}
\begin{cases}
\dys {\forall \ k>0}, \ \ \exists \ \delta>0 \ \ \text{such that}\\ 
\dys\frac{dw}{dx}\geq k, \ \text{ a.e.}\ x\in [x_0-\delta, x_0+\delta]\cap[A,B], \\
\dys w(x)\geq k(x-x_0), \ \forall \ x\in [x_0, x_0+\delta]\cap[A,B], \\
\dys w(x)\leq k(x-x_0), \ \forall\ x \in [x_0-\delta, x_0]\cap[A,B]. 
\end{cases}
\end{equation}
\end{lemma}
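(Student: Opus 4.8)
The plan is to turn the distributional identity in \eqref{formLemma} into a pointwise a.e. identity for $\frac{dw}{dx}$, and then to exploit the fact that the singularity $\phi(0)=+\infty$ forces the derivative of $w$ to be arbitrarily large wherever $w$ is close to its zero $x_0$. First I would observe that, since $w\in H^1(0,L)$, $\phi(w)\in L^2(0,L)$, $l\in L^2(0,L)$ and $a\in L^\infty(0,L)$ with $a\geq\alpha>0$, both sides of the equation $a\,\frac{dw}{dx}=\phi(w)+l$ belong to $L^2(0,L)$; hence the identity, which holds in $\mathcal{D}'(0,L)$, is in fact an equality in $L^2(0,L)$, so that
$$
\frac{dw}{dx}(x)=\frac{\phi(w(x))+l(x)}{a(x)} \quad \text{a.e. } x\in(0,L).
$$

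Next, fix $k>0$ and recall from \eqref{rem0inf} that $\phi(s)\to+\infty$ as $s\to0$: therefore there is $\eta>0$ such that $\phi(s)\geq k\beta+M$ for every $s$ with $|s|\leq\eta$, where $\beta$ is the upper bound of $a$ in \eqref{a1} and $M$ is the constant in \eqref{gsat}. Since $w\in H^1(0,L)\subset C^{0,\frac12}([0,L])$ by Morrey's embedding \eqref{Morrey} and $w(x_0)=0$, the continuity of $w$ provides $\delta>0$ such that $|w(x)|\leq\eta$ for all $x\in[x_0-\delta,x_0+\delta]\cap[0,L]$. On the set $[x_0-\delta,x_0+\delta]\cap[A,B]$ I can then combine the bound $\phi(w(x))\geq k\beta+M$, the hypothesis $l(x)\geq -M$ from \eqref{gsat}, and $a(x)\leq\beta$ to obtain
$$
\frac{dw}{dx}(x)=\frac{\phi(w(x))+l(x)}{a(x)}\geq\frac{(k\beta+M)-M}{\beta}=k \quad \text{a.e.},
$$
which is the first assertion in \eqref{FC}; note that the numerator is nonnegative here, so passing from $a(x)\leq\beta$ to the quotient bound is legitimate.

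Finally, the two pointwise inequalities on $w$ follow by integration, using that $w$ is absolutely continuous (being in $H^1(0,L)$) and that $w(x_0)=0$. For $x\in[x_0,x_0+\delta]\cap[A,B]$ I would write $w(x)=\int_{x_0}^x\frac{dw}{dt}\,dt\geq k(x-x_0)$, and for $x\in[x_0-\delta,x_0]\cap[A,B]$ I would write $-w(x)=\int_{x}^{x_0}\frac{dw}{dt}\,dt\geq k(x_0-x)$, i.e. $w(x)\leq k(x-x_0)$. The only real subtlety, and the conceptual heart of the statement, is the first step: it is the integrability requirement $\phi(w)\in L^2(0,L)$ together with the singularity $\phi(0)=+\infty$ that makes the quotient representation of $\frac{dw}{dx}$ meaningful and forces the steep slope near $x_0$; everything after that is elementary calculus. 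I do not expect a genuine obstacle, but care is needed to keep all inequalities inside the intersection with $[A,B]$, where alone the sign hypothesis \eqref{gsat} on $l$ is assumed.
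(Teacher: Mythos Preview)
Your argument is correct and follows essentially the same route as the paper's proof: use Morrey's embedding to get continuity of $w$, exploit $\phi(s)\to+\infty$ as $s\to0$ to make $\phi(w(x))$ large near $x_0$, combine with $l\geq -M$ and $a\leq\beta$ to bound $\frac{dw}{dx}$ from below, and then integrate. Your presentation is in fact slightly more direct, since you choose the threshold $k\beta+M$ for $\phi$ from the outset rather than passing through $\inf_{|t|\leq\varepsilon}\phi(t)$ as the paper does, but the substance is identical.
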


Observe that formula   \eqref{FC} implies that the graph of the function $w$ can not enter in the {\it forbidden region} colored  red in  Figure \ref{conef},  when $x_0$,  which is a  zero of $w$,  can be either  an interior point  of $[A,B]$ or   an extremity of $[A,B]$, namely  $x_0=A$ or $x_0=B$. 
\bk 

\begin{figure}[htbp]\centering
\includegraphics[width=1.4in]{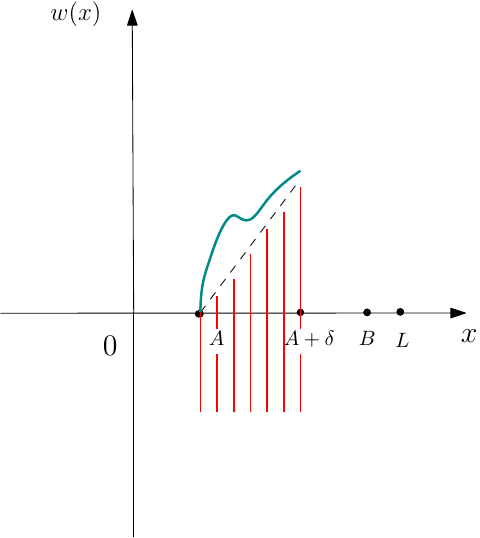}\ \ \includegraphics[width=1.9in]{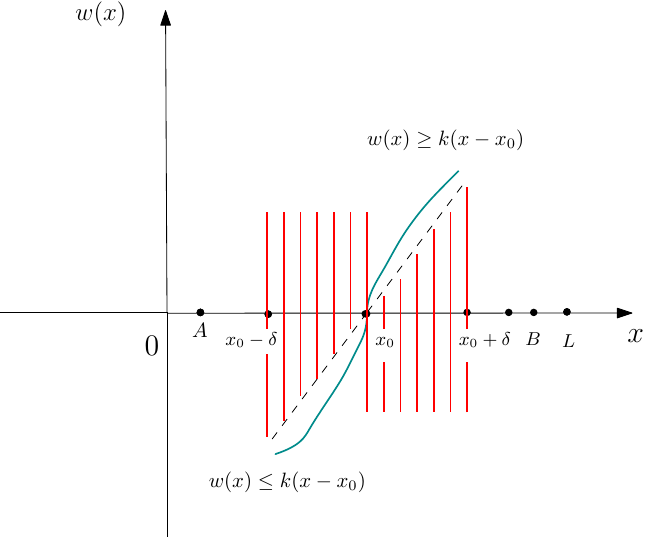}\ \ \includegraphics[width=1.3in]{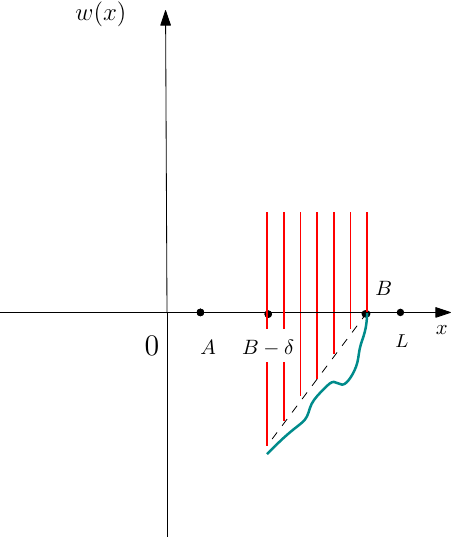}
\caption{Visualizing the statement of Lemma \ref{lemma36}: \newline
the extreme case $x_0=A$, the case $x_0\in (A,B)$   and the extreme case $x_0=B$.}\label{conef}
\end{figure}
\begin{proof}[\rm{\bf{Proof of Lemma \ref{lemma36}}}]
The main idea of the proof is that  the facts that  $\phi(w(x_0))=+\infty$ and $l\geq -M$ and the equation of the second line of \eqref{formLemma},  formally imply that $\dys\frac{d w }{dx}(x_0)=+\infty$, and therefore that $\dys\frac{d w }{dx}(x)$ is very large when $x$ is close to  $x_0$. Let us write this idea in a correct mathematical form. 

 Because $N=1$ one has $H^1 (A,B)\subset C^{0}([A,B])$ by Morrey's theorem (see \eqref{Morrey}). Since $w(x_0)=0$,  
then for every fixed $\vare>0$ there exists $\delta>0$, $\delta=\delta(\varepsilon)$,  such that
$$
\forall\, x\in\mathcal{V}_\delta=[x_0-\delta,x_0+\delta]\cap[A,B], \,\,\,\,|w(x)|\leq \vare.
$$
Then, 
 $$
\forall\,  x\in\mathcal{V}_\delta\,\,\,\,|\phi(w(x))|\geq \inf_{t\in[-\vare,+\vare]}\phi(t),
$$
so that in view of \eqref{gsat} and \eqref{formLemma}, one has 
\begin{equation}\label{nobeta}
a(x)\frac{dw}{dx}\geq \inf_{t\in[-\vare,+\vare]}\phi(t) - M, \mbox{for a.e. }\, x\in\mathcal{V}_\delta.
\end{equation}

Since $\dys \inf_{t\in[-\vare,+\vare]}\phi(t)\to +\infty $ as $\vare\to 0$,  the left hand side of \eqref{nobeta} is non-negative for $\vare$ sufficiently small, so that $\dys\frac{dw}{dx}\geq 0$ and, by \eqref{a1}, for $\vare$ sufficiently small, one has 

$$
 \beta\frac{dw}{dx}\geq  a(x)\frac{dw}{dx}\geq \inf_{t\in[-\vare,+\vare]}\phi(t) - M, \,\,\,\,\mbox{for a.e. }\, x\in\mathcal{V}_\delta .
$$

Dividing by $\beta$ and choosing $\vare$ sufficiently  small, this implies that, for every $k>0$, one has for some $\delta>0$  
\begin{equation}\label{beta}
\frac{dw}{dx}\geq {k}\ \ \text{a.e. in}\ \  \mathcal{V}_\delta\,,
\end{equation}
which immediately implies, using also  $w(x_0)=0$, that also the two latest lines of \eqref{FC} hold true. Lemma \ref{lemma36} is proved.

\end{proof}

\begin{proof}[\rm{\bf{Proof of Theorem \ref{36}}}]

Assume by contradiction that there exists some $u$ which is a weak  solution of problem \eqref{pb1} in the sense of  Definition \ref{defin}.  Then,  by Proposition \ref{remequiv}, $u$ satisfies
$$\begin{cases}
u\in H^1(0,L), \ \ \phi(u)\in L^2(0,L),\\
u(0)=u(L)=0,\\
\dys a (x) \frac{du}{dx}  = \phi (u) + g+ c\,,
\end{cases}$$
where $c$ is given in the last line of \eqref{form-def4}.

\indent Apply Lemma~\ref{lemma36} with $w=u$, $l=g+c$, $A=0$, $B=L$,  and $x_0=0$. Fixing any $k>0$, one obtains that for some $\delta_0>0$ with $\delta_0< L$, 
\begin{equation}
\label{4.100}
u(x)\geq k{x}>0,\ \   \forall\, x\in(0,\delta_0]\,.
\end{equation}

Let us define the set $X$ and the number $y$ by 
$$
X=\{x:\,x\in[\delta_0,L],\,u(x)=0\}
$$
$$
y=\inf_{x\in X}x.
$$

The set $X$ is non-empty since $L\in X$; on the other hand, one has $L\subset[\delta_0,L]$; therefore $y$ is correctly defined. Let $x_n$ be a minimizing sequence, i.e. a sequence which satisfies 
$$
x_n\in X, \mbox{ i.e. } x_n\in[\delta_0,L],\, u(x_n)=0,\,\mbox{ and } x_n\to y.
$$
Then $y\in [\delta_0,L]$ and $u(y)=0$ since $u$ is continuous. Since $u(\delta_0)\geq k\delta_0>0$ by \eqref{4.100}, one has $L\geq y>\delta_0>0$.

Then  apply again  Lemma~\ref{lemma36}, now  with $w=u$, $l=g+c$, $A=\delta_0$,   $B=y$, and $x_0=y$. Fixing any $k>0$,  one obtains that   for some $\delta_y>0$ with $y-\delta_y\geq \delta_0$, 
\begin{equation}\label{beta4}
u(x)\leq k(x-y)<0,\ \ \forall\, x\in[y-\delta_y,y)\,. 
\end{equation}

Now observe that $u(\delta_0)>0$ and $u(y-\delta_y)<0$. Since $u$ is continuous there exists some  $y_0$  such that $\delta_0<y_0<y-\delta_y <y$ with $u(y_0)=0$,  which contradicts the definition of $y$. 
Theorem \ref{36} is proved.

\end{proof}

Our second non-existence result is obtained instead  independently of the source term $g$. It asserts that when the singularity of $\phi$ at $s=0$ is too strong, and, more precisely, when $\phi$ is not integrable both in $0^+$ and $0^-$, then it does not exist any weak solution of problem \eqref{pb1} in the sense of Definition  \ref{defin}.

\begin{theorem}[{{\bf Non-existence when the singularity is too strong}}]\label{45}
Assume that hypothesis \eqref{cond1} holds true, and that the data $(a, g, \phi)$  satisfy hypotheses \eqref{a1}-\eqref{condphi1} and \eqref{p3}. Assume moreover that $\phi$ satisfies
\begin{equation}\label{p+}
\int_0^{+\delta} \phi(t) \,dt =+\infty\,, \,\, \forall \delta,\,\, 0<\delta<1,
\end{equation}
and 
\begin{equation}\label{p-}
\int_{-\delta}^0 \phi(t)\, dt  =+\infty,\, \,\, \forall \delta,\,\, 0<\delta<1. 
\end{equation}
 \indent Then  it does not exist  any weak  solution of problem \eqref{pb1} in the sense of Definition  \ref{defin}. \end{theorem}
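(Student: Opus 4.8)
The plan is to prove something stronger than mere non-existence of a weak solution: I would show that under \eqref{p+}--\eqref{p-} the admissible class is \emph{empty}, i.e. there is no function $u\in\huzu$ with $\phi(u)\in L^2(0,L)$ at all. Since any weak solution in the sense of Definition~\ref{defin} must in particular belong to this class, this immediately gives the conclusion and explains why the result is independent of $g$: neither the equation nor the source term will enter the argument.

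So I argue by contradiction. Suppose $u\in\huzu$ with $\phi(u)\in L^2(0,L)$. By Morrey's embedding \eqref{Morrey}, $u\in C^0([0,L])$ and $u(0)=0$. First I note that $u$ cannot vanish on any subinterval of positive measure, since there one would have $\phi(u)=\phi(0)=+\infty$ by \eqref{p3}, contradicting $\phi(u)\in L^2(0,L)$. Hence, as $u$ is continuous with $u(0)=0$ and is not identically zero near $0$, there is a point $x_1$ arbitrarily close to $0$ with $u(x_1)\neq0$. Using \eqref{rem0inf2}, fix $\varepsilon_0\in(0,1)$ so small that $\phi\geq0$ on $[-\varepsilon_0,+\varepsilon_0]$, and then choose $x_1$ so close to $0$ that $|u|\leq\varepsilon_0$ on $[0,x_1]$. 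Up to replacing $u$ by $-u$ (which only exchanges the roles of \eqref{p+} and \eqref{p-}), I may assume $u(x_1)=a$ with $0<a\leq\varepsilon_0$.

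The heart of the matter is a change of variables combined with Cauchy--Schwarz. Since $u$ is absolutely continuous on $[0,x_1]$ with $u(0)=0$ and $u(x_1)=a$, the intermediate value theorem shows that $u$ attains every value $t\in(0,a)$, so the one-dimensional area formula (Banach indicatrix) applied to the nonnegative Borel function $\phi\,\mathbf 1_{[-\varepsilon_0,+\varepsilon_0]}$ gives
\begin{equation*}
\int_0^{x_1}\phi(u(x))\left|\frac{du}{dx}\right|\,dx=\int_{\re}\phi(t)\,N(t)\,dt\geq\int_0^{a}\phi(t)\,dt=+\infty,
\end{equation*}
where $N(t)=\#\{x\in[0,x_1]:u(x)=t\}\geq1$ for $t\in(0,a)$, and where the last equality is \eqref{p+} (all integrands being nonnegative, since $u([0,x_1])\subset[-\varepsilon_0,+\varepsilon_0]$ and $\phi\geq0$ there, the manipulation is licit). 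On the other hand, since $\phi(u)\in L^2(0,L)$ and $\frac{du}{dx}\in L^2(0,L)$, Cauchy--Schwarz yields
\begin{equation*}
\int_0^{x_1}\phi(u)\left|\frac{du}{dx}\right|\,dx\leq\|\phi(u)\|_{L^2(0,L)}\left\|\frac{du}{dx}\right\|_{L^2(0,L)}<+\infty.
\end{equation*}
These two displays contradict one another, so no such $u$ exists. When $u(x_1)<0$ one argues identically on the corresponding interval of negative values, using \eqref{p-} in place of \eqref{p+}; this is precisely why both non-integrability hypotheses are required.

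The only delicate point is the rigorous justification of the change-of-variables inequality for a merely absolutely continuous (and possibly non-monotone) $u$: the hard part is to make sure the level $t\in(0,a)$ is really counted with multiplicity at least one. I would settle this by invoking the area formula for $u\in H^1(0,L)\subset AC([0,L])$, but if one prefers to avoid that machinery, one can instead pass to a monotone sub-arc by taking the first hitting times $x(t)=\min\{x\in[0,x_1]:u(x)=t\}$ for $t\in(0,a)$, on which the elementary substitution $t=u(x)$ applies and reproduces the lower bound $\int_0^a\phi(t)\,dt=+\infty$. Everything else is routine.
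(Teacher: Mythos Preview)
Your proof is correct and follows the same strategy as the paper: both show that the admissible class $\mathcal{U}=\{u\in\huzu:\phi(u)\in L^2(0,L)\}$ is empty by confronting the Cauchy--Schwarz bound $\int\phi(u)\,|u'|<\infty$ with the divergence forced by \eqref{p+}/\eqref{p-} near a zero of $u$. The paper packages this as a sign constraint (Propositions~\ref{46} and~\ref{47bis}: under \eqref{p+} alone every $u\in\mathcal{U}$ is $\le0$, and symmetrically under \eqref{p-}) and obtains the divergence via the chain rule with the antiderivative $\psi_\delta(s)=\int_s^\delta\phi$ on a maximal interval where $u>0$, whereas your main route uses the area formula (Banach indicatrix) on a short interval near $x=0$; your ``first hitting times'' alternative is essentially the paper's argument.
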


\begin{remark}
In the model  case \eqref{pM} where the function $\phi$ is given by $\phi =\phi_\gamma$ defined by 
\begin{equation}
\label{pMbis}
\dys\phi_\gamma (s)=\frac{c}{|s|^{\gamma}} +\varphi (s),
\mbox{ with } c>0, \,\gamma>0,\, \varphi\in C^0(\re),
\end{equation}

\noindent hypotheses \eqref{p+} and \eqref{p-} are satisfied if and only if $\gamma\geq 1$. 

\indent In this case the proof of Theorem \ref{45} is very simple. Assume indeed by contradiction that  $u$ is a weak solution of problem \eqref{pb1} in the sense of Definition \ref{defin},  and let $x_0\in [0,L]$  be such that $u(x_0)=0$. Recalling Morrey's embedding  $\huzu\subset C^{0,\frac12}([0,L])$ (see \eqref{Morrey}), one has 
 $$
 |u(x)|=|u(x)-u(x_0)|\leq \|u\|_{C^{0,\frac12}([0,L])}|x-x_0|^{\frac{1}{2}}.
 $$
Using \eqref{pMbis} and recalling that  $u\in H^1(0,L)$ and therefore $\varphi(u)$ is bounded,  one  has 
$$
\phi_\gamma(u(x))= \frac{c}{|u(x)|^\gamma} + \varphi(u(x))\geq \frac{c}{ \|u\|^{\gamma}_{C^{0,\frac12}([0,L])}|x-x_0|^{\frac{\gamma}{2}}} + \inf_{|s|\leq \|u\|_{L^\infty(0,L)}}|\varphi(s)|\,,
$$
from which one deduces that $\phi_\gamma(u(x))\not\in L^2(0,L)$ if $\gamma\geq 1$, a contradiction.

Observe that this proof continues to hold in the case where  \eqref{pM} is only assumed to be in force on a neighborhood of $s=0$. 

\qed
\end{remark} 

\begin{remark}
Let us remark that, when $\phi$ satisfies hypothesis \eqref{condphi1}, it is equivalent to make hypotheses \eqref{p+} and \eqref{p-} for every $\delta$, $0<\delta<1$,  or to assume that there exists some $\delta_0>0$ such that hypothesis \eqref{p+} and \eqref{p-} hold true for this fixed $\delta_0$.

\qed
\end{remark}

 Theorem \ref{45} immediately follows from the following proposition,   which has its own interest. 

\begin{proposition}
\label{46}
Assume that hypothesis \eqref{cond1} holds true, and that the data $(a, g, \phi)$  satisfy hypotheses \eqref{a1}-\eqref{condphi1} and \eqref{p3}. Assume moreover that $\phi$ satisfies  \eqref{p+}.  Then every possible weak solution of problem \eqref{pb1} in the sense of Definition  \ref{defin} is non-positive. 

Similarly, assume that $\phi$ satisfies \eqref{p-}. Then every possible weak solution of problem \eqref{pb1} in the sense of Definition  \ref{defin} is non-negative. 

\end{proposition}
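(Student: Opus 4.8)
The plan is to prove the first assertion (under \eqref{p+}, every weak solution is non-positive); the second follows by a symmetric argument. I would argue by contradiction and, remarkably, use only that $u\in\huzu$ and $\phi(u)\in L^2(0,L)$ — the PDE itself plays no role here, so that the statement is really a constraint coming from the function spaces in Definition~\ref{defin} together with the non-integrability \eqref{p+}.

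First I would record the elementary consequences of the hypotheses: since $u\in\huzu$, Morrey's embedding \eqref{Morrey} gives $u\in C^0([0,L])$ with $u(0)=u(L)=0$ and $\frac{du}{dx}\in L^2(0,L)$; and since $\phi(u)\in L^2(0,L)$, the Cauchy--Schwarz inequality yields the crucial integrability $\phi(u)\frac{du}{dx}\in L^1(0,L)$. I would then introduce the primitive $\Phi(s)=\int_1^s\phi(t)\,dt$, which is finite and $C^1$ on $(0,+\infty)$ because $\phi$ is continuous and real-valued away from $0$ (see \eqref{rem0inf3}); using \eqref{rem0inf2} (so that $\phi\ge 0$ near $0$) together with \eqref{p+}, the key fact is that $\Phi(s)=-\int_s^1\phi(t)\,dt\to-\infty$ as $s\to 0^+$.

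Next, assuming by contradiction that $u$ is positive somewhere, I would isolate a one-sided approach to a zero: setting $\alpha=\sup\{x\le x^*:u(x)\le 0\}$ for a point $x^*$ with $u(x^*)>0$, continuity and $u(0)=0$ give $u(\alpha)=0$ while $u>0$ on $(\alpha,x^*]$. On any compact subinterval $[x_1,x_2]\subset(\alpha,x^*]$ the function $u$ takes values in a compact subset of $(0,+\infty)$, so the standard chain rule for the composition of the $C^1$ function $\Phi$ with the $H^1$ function $u$ applies, giving $\Phi(u)\in H^1(x_1,x_2)$ with $\frac{d}{dx}\Phi(u)=\phi(u)\frac{du}{dx}$, and hence $\Phi(u(x_2))-\Phi(u(x_1))=\int_{x_1}^{x_2}\phi(u)\frac{du}{dx}\,dx$. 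Letting $x_1\to\alpha^+$ with $x_2$ fixed, the right-hand side converges to the finite number $\int_\alpha^{x_2}\phi(u)\frac{du}{dx}\,dx$ by absolute continuity of the Lebesgue integral, whereas $u(x_1)\to 0^+$ forces $\Phi(u(x_1))\to-\infty$ and thus the left-hand side to $+\infty$ — a contradiction. Therefore $u\le 0$.

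Finally, for \eqref{p-} I would run the mirror argument, replacing $\Phi$ by a primitive that tends to $+\infty$ as $s\to 0^-$ and isolating a one-sided approach to a zero from within $\{u<0\}$, to conclude $u\ge 0$. The main technical obstacle is the rigorous justification of the chain rule and of the fundamental theorem of calculus for $\Phi(u)$ across the degeneration as $x\to\alpha^+$: one must stay on compact subintervals where $u$ is bounded away from $0$ (so that $\Phi$ is $C^1$ there and the composition lies in $H^1$), and only afterwards pass to the limit, keeping the controlled quantity $\int_{x_1}^{x_2}\phi(u)\frac{du}{dx}\,dx$ finite while the boundary term $\Phi(u(x_1))$ diverges. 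The integrability $\phi(u)\frac{du}{dx}\in L^1$ is exactly what keeps the right-hand side finite and drives the contradiction.
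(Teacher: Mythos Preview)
Your proposal is correct and follows essentially the same route as the paper: the paper reduces Proposition~\ref{46} to Proposition~\ref{47bis}, a statement purely about the set $\mathcal{U}=\{u\in\huzu:\phi(u)\in L^2(0,L)\}$, and proves the latter exactly as you do --- isolate a zero $y_0$ approached from the right by points where $u>0$, apply the chain rule for the primitive of $\phi$ on compact subintervals of $(y_0,x_0]$ where $u$ is bounded away from~$0$, and let the endpoint tend to $y_0$ so that the boundary term blows up while $\int\phi(u)\frac{du}{dx}\,dx$ stays finite by Cauchy--Schwarz. The only cosmetic differences are the choice of primitive ($\psi_\delta(s)=\int_s^\delta\phi$ in the paper versus your $\Phi(s)=\int_1^s\phi$) and the side from which the zero is approached.
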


Proposition~\ref{46} is itself an immediate consequence of the following result, where the set $\mathcal{U}$ is defined by 
\begin{equation}\label{setU}\mathcal{U}=\{{u}\in H^1_0 (0,L) \ \text{such that}\ \  \phi({u})\in L^2(0,L)\};\end{equation}
observe that every solution $u$ of problem  \eqref{pb1} in the sense of Definition  \ref{defin} belongs\break to $\mathcal{U}$,  while  $0\not\in \mathcal{U}$.  
\begin{proposition}
\label{47bis}
Assume that hypothesis \eqref{cond1} holds true, let $\phi$ be a nonlinearity satisfying \eqref{condphi1} and \eqref{p3}, and let $u\in \mathcal{U}$. Then: 
if $\phi$ satisfies \eqref{p+},  one has 
\begin{equation}
\label{412<0}
u(x) \leq 0,\bk \quad \forall x\in[0,L];
\end{equation}
if $\phi$ satisfies \eqref{p-},  one has 
\begin{equation}
\label{412>0}
u(x) \geq 0,\bk  \quad \forall x\in[0,L].
\end{equation}
\end{proposition}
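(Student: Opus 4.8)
The plan is to prove the two assertions by contradiction, treating in detail the case where $\phi$ satisfies \eqref{p+} (the case \eqref{p-} being entirely symmetric, for instance by applying the first case to $\tilde u=-u$ and $\tilde\phi(s)=\phi(-s)$, which satisfies \eqref{p+} precisely when $\phi$ satisfies \eqref{p-}). So I assume \eqref{p+} and suppose, for contradiction, that some $u\in\mathcal{U}$ takes a strictly positive value. The starting point is a finiteness property that will eventually be contradicted: since $u\in\huzu$ one has $\frac{du}{dx}\in L^2(0,L)$, and since $u\in\mathcal{U}$ one has $\phi(u)\in L^2(0,L)$, so by Cauchy--Schwarz
\[
\int_0^L \left|\phi(u)\right|\left|\frac{du}{dx}\right|\,dx\leq \|\phi(u)\|_{L^2(0,L)}\left\|\frac{du}{dx}\right\|_{L^2(0,L)}<+\infty .
\]
The whole point will be to show that the positivity of $u$ forces this quantity to be infinite.

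To this end, since $u$ is continuous (Morrey's embedding \eqref{Morrey}) and positive somewhere while $u(0)=u(L)=0$, I would let $x^*$ be a point where $u$ attains its maximum $M=\max_{[0,L]}u>0$, and set $a=\sup\{x\in[0,x^*]:u(x)=0\}$, the last zero of $u$ before $x^*$; then $u(a)=0$, $u(x^*)=M$, and $u>0$ on $(a,x^*]$ with $0\le u\le M$ on $[a,x^*]$. Because $\phi(t)\to+\infty$ as $t\to0^+$ (see \eqref{rem0inf}) and $\phi$ is bounded on compact subintervals of $(0,+\infty)$ (see \eqref{rem0inf3}), there is a constant $K\ge 0$ with $\phi\ge -K$ on $(0,M]$. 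I would then apply the one-dimensional area formula (Banach indicatrix theorem) to the absolutely continuous function $u$ on $[a,x^*]$ and the nonnegative integrand $\phi+K$:
\[
\int_a^{x^*}\bigl(\phi(u(x))+K\bigr)\left|\frac{du}{dx}\right|\,dx=\int_0^M\bigl(\phi(t)+K\bigr)\,N(t)\,dt,
\]
where $N(t)=\#\{x\in[a,x^*]:u(x)=t\}$. Since $u$ is continuous with $u(a)=0$ and $u(x^*)=M$, the intermediate value theorem gives $N(t)\ge 1$ for every $t\in(0,M)$, so the right-hand side is at least $\int_0^M(\phi(t)+K)\,dt=\int_0^M\phi(t)\,dt+KM=+\infty$ by \eqref{p+} (splitting off a subinterval $(0,\delta)$ with $\delta<1$ on which $\phi$ is not integrable, the remaining part being finite).

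Consequently $\int_a^{x^*}\phi(u)\left|\frac{du}{dx}\right|\,dx=+\infty$, after subtracting the finite quantity $K\int_a^{x^*}|du/dx|\,dx$, which contradicts the finiteness established in the first step. Hence $u\le 0$ on $[0,L]$, i.e.\ \eqref{412<0}; assertion \eqref{412>0} under \eqref{p-} follows by the symmetric argument. The step I expect to be the main obstacle is the area-formula computation: a generic $u\in\huzu$ need not be monotone on $[a,x^*]$, so one cannot simply change variables $t=u(x)$. The Banach indicatrix theorem is exactly what legitimizes the identity, with the bound $N\ge 1$ coming from continuity alone; note moreover that $N$ is integrable on $(0,M)$ because $\int_0^M N(t)\,dt=\int_a^{x^*}|du/dx|\,dx<+\infty$, which is what makes the subtraction of the $K$-term licit and keeps the argument rigorous.
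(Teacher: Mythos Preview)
Your argument is correct and reaches the same contradiction as the paper (that $\phi(u)\,\frac{du}{dx}\in L^1(0,L)$ would force $\int_0^\delta\phi(t)\,dt<+\infty$), but via a genuinely different route. The paper proceeds by the chain rule: it introduces a primitive $\psi_\delta(s)=\int_s^\delta\phi(t)\,dt$, works on intervals $[y_0+\eta,x_0]$ where $u$ is bounded away from zero so that $\phi\in C_b^0$ and the classical identity $\phi(u)\,\frac{du}{dx}=-\frac{d}{dx}\psi_\delta(u)$ applies, and then lets $\eta\to0$ to obtain $\int_{y_0+\eta}^{x_0}\phi(u)\,\frac{du}{dx}\,dx=\psi_\delta(u(y_0+\eta))-\psi_\delta(u(x_0))\to+\infty$. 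Your approach instead invokes the area formula (Banach indicatrix theorem) for the absolutely continuous function $u$ on $[a,x^*]$, obtaining $\int_a^{x^*}(\phi(u)+K)\bigl|\frac{du}{dx}\bigr|\,dx=\int_0^M(\phi(t)+K)N(t)\,dt\ge\int_0^M(\phi(t)+K)\,dt=+\infty$ in one shot, without any limiting procedure.

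The trade-off is clear: the paper's argument is entirely elementary (only the chain rule for $H^1$ functions composed with $C_b^1$ maps, plus a limit), while yours is shorter and sidesteps the issue of possible non-monotonicity of $u$ on $[a,x^*]$ at the price of calling on the area formula, which is a somewhat less standard tool in this context. Your remark that $\int_0^M N(t)\,dt=\int_a^{x^*}\bigl|\frac{du}{dx}\bigr|\,dx<\infty$ is exactly what is needed to justify separating the $K$-term, and the final contradiction with $\int_0^L|\phi(u)|\bigl|\frac{du}{dx}\bigr|\,dx<\infty$ is clean.
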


\begin{proof}[\bf Proof of Proposition \ref{47bis}]
We will prove that hypothesis \eqref{p+}  implies \eqref{412<0}. In  the case where \eqref{p-} is assumed instead of \eqref{p+}, the proof of \eqref{412>0} is similar.

Assume that \eqref{p+} holds true and assume by contradiction that  $u\in \mathcal{U}$ is such that  for some $x_0\in (0,L)$ one has $u(x_0)>0$.

Define $y_0$ by 
\begin{equation}
\label{411bis}
y_0=\inf\{x\in [0,x_0]:  u(x)>0\}
\end{equation}
(one could also  consider $y_1=\sup\{x\in [x_0,L]:  u(x)>0\}$). Observe that $y_0$ is well defined and that 
\begin{equation}
\label{414bis}
u(y_0)=0;
\end{equation}
indeed, since $u(x)>0$ for every $x\in (y_0,x_0]$, one has $u(y_0)\geq 0$; but if $u(y_0)>0$, there exists $\delta>0$ such that $u(x)>0$ for every $x\in [y_0-\delta,y_0+\delta]$, a contradiction with the definition \eqref{411bis} of $y_0$.

On the other hand, fix $\delta>0$ and define $\psi_\delta:]0,+\infty[\to\re$ by
$$
\psi_\delta(s)=\int_s^\delta \phi(t) dt,\quad \forall s>0,
$$
or equivalently by
$$
\psi_\delta(\delta)=0,\quad \psi'_\delta(s)=-\phi(s),\quad \forall s>0.
$$
Then \eqref{p+} is equivalent to 
\begin{equation}
\label{451}
\psi_\delta(s)\to+\infty \quad \mbox{ as } s\to 0,\,s>0.
\end{equation}

Recall that $u\in H_0^1(\Omega)\subset\subset C^0([0,L])$, and for $\eta$ such that  $0<\eta<x_0-y_0$, define the two real numbers $\underline{u}_\eta$ and $\overline{u}$ by
$$
\underline{u}_\eta=\min_{x\in[y_0+\eta,x_0]}u(x),\quad \overline{u}=\max_{x\in[y_0,x_0]}u(x),
$$
and observe that 
$$
\forall \eta,\quad 0<\eta<x_0-y_0, \mbox{ one has } \,\, 0<\underline{u}_\eta\leq u(x)\leq \overline{u}<+\infty,\quad \forall x\in [y_0+\eta,x_0].
$$
Define also the two real numbers
$$
\underline{\phi}=\min_{s\in[0,\overline{u}]}\phi(s),\quad \overline{\phi}_\eta=\max_{s\in [\underline{u}_\eta,\overline{u}]}\phi(s),
$$
and observe that $ \overline{\phi}_\eta$ is finite for every $\eta$, $0<\eta<x_0-y_0$, even if it is unbounded as $\eta\to 0$.

Then since $u\in H^1(y_0+\eta,x_0)$ and since 
$$
\phi \in C^0_b([\underline{u}_\eta,\overline{u}]), \quad \mbox{ which implies that } \quad \psi_\delta\in C^1([\underline{u}_\eta,\overline{u}]),
$$
one has the chain rule 
$$
\phi(u)\frac{du}{dx}=-\psi'_\delta(u)\frac{du}{dx}=-\frac{d\psi_\delta(u)}{dx} \quad \mbox{ in } \,\, L^2(y_0+\eta,x_0),
$$
and therefore
$$
\int_{y_0+\eta}^{x_0}\phi(u)\frac{du}{dx}dx=\int_{y_0+\eta}^{x_0}-\frac{d\psi_\delta(u)}{dx} dx=\psi_\delta(u(y_0+\eta))-\psi_\delta(u(x_0)), \quad \forall \eta,\quad 0<\eta<x_0-y_0.
$$
Now  $\psi_{\delta}(u(x_0))$ is finite, while in view of \eqref{414bis} and \eqref{451} one has
$$
u(y_0+\eta)\to 0 \quad \mbox{and} \quad \psi_{\delta}(u(y_0+\eta))\to +\infty, \quad \mbox{as } \quad \eta\to 0,\quad \eta>0,
$$
which implies that 
\begin{equation}
\label{452}
\int_{y_0+\eta}^{x_0} \phi(u)\frac{du}{dx} dx \to +\infty, \quad  \mbox{as } \eta\to 0,\,\eta>0,
\end{equation}
{a contradiction since $u \in \mathcal{U}$}. This proves Proposition~\ref{47bis}.

\end{proof}

\section{Studying an (associated) ODE}\label{Sec5}

In this section we will study an Ordinary Differential Equation (ODE) that for the moment we formally write as
\begin{equation}\label{50}
\begin{cases}
\dys a (x) \frac{dv}{dx}   =  {\phi} (v) + h(x) & \text{in}\;(0,L)\,,\\
\dys v(0)=0. & 
\end{cases}
\end{equation}
Under convenient hypotheses, we will prove an existence result, an a priori estimate, and two stability results  (Subsection~\ref{existenceODE}),  a positivity result, a comparison result, and an uniqueness result  (Subsection~\ref{uniquenessODE}). For the sake of exposition these results are summarized in the brief Subsection~\ref{sub53}.

The ODE \eqref{50} is clearly strongly related to problem \eqref{pb1}, see e.g. \eqref{form-def3} in  Proposition \ref{remequiv} above.  In the present  section we will nevertheless study the ODE \eqref{50} for  itself, and we will not try to make connections between the ODE \eqref{50} and problem \eqref{pb1}. The results of the present section will be exploited in the following sections, and in particular in Section~\ref{mult} to obtain multiplicity results for problem \eqref{pb1}. 

In order to emphasize the difference between the present study of ODE \eqref{50} and  the study of problem \eqref{pb1}, we will denote by $v$ (and not by $u$ as in problem \eqref{pb1}) the solution of the ODE \eqref{50}. We will also denote by $(a,h,\phi)$ (and not by $(a,g,\phi)$) the data for ODE \eqref{50}.

In the whole  section we will assume that the data $(a,h,\phi)$ satisfy the following hypotheses (see Remark~\ref{rem51} below for a comparison with the hypotheses \eqref{a1}-\eqref{condphi1}  on the data $(a,g,\phi)$ for problem \eqref{pb1}):
\begin{equation}\label{52n}
a\in L^{\infty}(0,L),\,\, \exists \,\, \alpha,\beta,\,\, 0<\alpha<\beta, \ \alpha\leq a(x)\leq  \beta\, \,\, \text{ a.e.}\,\, x\in(0,L),
\end{equation}
\begin{equation}\label{53n}
h\in L^2(0,L), 
\end{equation}
\begin{equation}
\label{55n}
\begin{cases}
\phi:\re\mapsto\re\cup\{+\infty\},
\,\,\phi\,\, \mbox{ is continuous with values in } \re\cup\{+\infty\},\\
 \phi(s)<+\infty, \ \ \forall s\in\re,\,\, \ s\neq 0\,,
\end{cases}
\end{equation} 
 
\begin{equation}\label{57n}
\int_0^{+\delta}\phi(t)\,dt<+\infty,\quad \int_{-\delta}^{0} \phi(t)\, dt  <+\infty,\, \,\, \forall \delta, \,\, 0<\delta<1,
\end{equation}

\begin{equation}\label{58n}
\phi\in C^0_b(\re\backslash(-\delta, \delta)),  \ \   \forall \ \delta>0.
\end{equation}

\begin{remark}
\label{rem50} 
\indent When  $\phi(0)<+\infty$,  then  \eqref{57n} is automatically satisfied, and, due to \eqref{58n}, one has $\phi\in C^0_b(\re)$. In this case the results of the present section are classical. But the interest is actually in the case 
$$
\phi(0)=+\infty,
$$
 where the results of this section are new.  
 
 \qed
\end{remark}

\begin{remark}
\label{rem51}
Hypothesis \eqref{52n} on $a$, \eqref{53n} on $h$, and \eqref{55n} on $\phi$ are identical to hypotheses \eqref{a1} on $a$, \eqref{g1} on $g$, and \eqref{condphi1} on $\phi$ made in Section~\ref{sec2} above. 

\indent In contrast, hypotheses  \eqref{57n} and \eqref{58n} on $\phi$ are new and restrictive in comparison with the hypotheses made in Section~\ref{sec2} above.

\indent Hypothesis \eqref{57n} it is quite natural in this context;  recall in fact  that it is proved in Theorem~\ref{45} above that if $\phi$ satisfies \eqref{p+} and \eqref{p-}, namely if 
\begin{equation}
\label{5.6bis}
\int_0^{+\delta}\phi(t)\,dt=+\infty,\quad \int_{-\delta}^{0} \phi(t)\, dt  =+\infty,\, \,\, \forall \delta, \,\, 0<\delta<1,
\end{equation}
(compare with hypothesis \eqref{57n}), then problem \eqref{pb1} does not have any weak solution in the sense of Definition~\ref{defin}.

\indent Concerning  hypothesis \eqref{58n}, observe that this new hypothesis  impose a restriction on the function $\phi$ in comparison with the hypotheses  made in Section~\ref{sec2}: indeed hypothesis \eqref{58n} impose that $\phi$ is bounded at $s=-\infty$ and $s=+\infty$, or in other terms that $\phi\in C^0_b(\re\backslash(-\delta, +\delta))$ for every $\delta>0$ (compare with \eqref{rem0inf3}).  However this restriction    can be considered as  tolerable due to the uniform boundedness of every possible solutions of problem \eqref{50} (see Remark \ref{importante2} above).\bk

\qed
\end{remark}
\begin{remark}
Observe that, when $\phi$ satisfies hypothesis \eqref{55n}, it is equivalent to make hypothesis \eqref{57n} for every $\delta$, $0<\delta<1$, and hypothesis \eqref{58n} for every $\delta$, $\delta>0$, or to assume that there exists some $\delta_0>0$ such that hypotheses \eqref{57n} and \eqref{58n} holds true for this $\delta_0$.

\qed
\end{remark}

\begin{remark}
In the ODE \eqref{50}  we have assumed that the (Cauchy) initial condition is $v(0)=0$. Results similar to the ones stated and proved in the present section could be obtained for any arbitrary initial condition $v(0)=v_0\in\re$. We do not consider this possibility here since our interest in the present paper  is only  in the case where $v(0)=0$.
 
\qed
\end{remark}

\subsection{Existence of a solution of  the Cauchy problem \eqref{50}}

\label{existenceODE}

\indent The mathematical (correct) formulation of initial value problem associated to the  ODE in \eqref{50} that we will use in this paper is  the following: we look for a function $v$ which satisfies
\begin{equation}\label{formh}
\begin{cases}
 v\in H^1(0,L), \,\,\phi(v)\in L^2(0,L) \,,\\
\dys a (x) \frac{dv}{dx}   =  {\phi} (v) + h \ \  \text{in} \ \ \mathcal{D}'(0,L),\\
\dys v(0)=0\,. & 
\end{cases}
\end{equation}
 
In this subsection we will prove the following existence result:

\begin{theorem}[{{\bf Existence}}]
\label{shoot} Assume that \eqref{cond1} holds true, and that  the data $(a,h,\phi)$ satisfy hypotheses \eqref{52n}-\eqref{58n}. Then there exists at least a solution of problem \eqref{formh}.
\end{theorem}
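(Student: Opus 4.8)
We want to prove existence of a solution to the singular ODE
\begin{equation*}
a(x)\frac{dv}{dx}=\phi(v)+h,\quad v(0)=0,\quad \phi(v)\in L^2(0,L),
\end{equation*}
where $\phi(0)=+\infty$ but $\phi$ is integrable at $0$ (hypothesis \eqref{57n}) and bounded away from $0$ (hypothesis \eqref{58n}). Let me think carefully about the structure here.

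The key difficulty is the singularity at the starting point $v(0)=0$: since $\phi(0)=+\infty$, the right-hand side blows up exactly where we impose the initial condition. So near $x=0$ the equation formally reads $a(x)v'\approx+\infty$, forcing $v$ to increase very steeply away from $0$ — this is exactly the "forbidden region" picture from Lemma \ref{lemma36}. The good news from \eqref{57n} is that $\phi$ is integrable, so although $v'$ blows up, the blow-up is mild enough that $v$ stays in $H^1$.

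**The approach.**

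The natural plan is a shooting/approximation scheme. I would approximate $\phi$ by a reasonable (non-singular) sequence $\phi_n$ — say the truncations $\phi_n=T_n(\phi)$, which belong to $C^0_b(\mathbb{R})$ — and solve the regularized Cauchy problems
\begin{equation*}
a(x)\frac{dv_n}{dx}=\phi_n(v_n)+h,\quad v_n(0)=0.
\end{equation*}
Since $\phi_n$ is bounded and continuous, this is a standard (non-singular) first-order ODE with $L^2$ forcing; rewriting it as $v_n'=\frac{1}{a}(\phi_n(v_n)+h)$ and integrating, a fixed-point argument (Carathéodory/Schauder, using that $\frac{1}{a}$ is bounded and $\phi_n$ is bounded) gives a solution $v_n\in H^1(0,L)$ for each $n$. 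Because $\phi_n$ is integrable-at-$0$ uniformly and the forbidden-region geometry forces $v_n$ to move away from $0$ immediately, I expect $v_n>0$ on $(0,\varepsilon]$ for small $\varepsilon$, so the singular zone is traversed quickly.

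**Obtaining uniform estimates and passing to the limit.**

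The crucial step is a uniform $L^2$ bound on $\phi_n(v_n)$, independent of $n$. This is where integrability \eqref{57n} must be exploited: I would multiply the equation by a suitable test quantity and use the primitive $\psi_\delta(s)=\int_s^\delta\phi(t)\,dt$ (exactly as in the proof of Proposition \ref{47bis}) to control $\int\phi(v_n)\frac{dv_n}{dx}$, converting the spatial integral into boundary values of $\psi_\delta(v_n)$, which stay finite precisely because $\phi$ is integrable at $0$. Combined with the boundedness of $\phi$ away from $0$ (hypothesis \eqref{58n}), this should yield $\|\phi_n(v_n)\|_{L^2}\le C$ uniformly. Then from the equation $\frac{dv_n}{dx}=\frac{1}{a}(\phi_n(v_n)+h)$ I get a uniform $H^1$ bound on $v_n$, hence (by Morrey \eqref{Morrey}) uniform $C^{0,1/2}$ bounds and, up to a subsequence, $v_n\rightharpoonup v$ weakly in $H^1(0,L)$, uniformly in $C^0([0,L])$, and $\phi_n(v_n)\rightharpoonup\Phi$ weakly in $L^2$. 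The uniform convergence gives $v_n(x)\to v(x)$ pointwise, so by the reasonable-approximation property \eqref{phincond2} one identifies $\phi_n(v_n(x))\to\phi(v(x))$ a.e., and together with the weak $L^2$ bound this forces $\Phi=\phi(v)$ and $\phi(v)\in L^2(0,L)$. Passing to the limit in the integrated (weak/distributional) form of the equation then yields that $v$ solves \eqref{formh}, with $v(0)=0$ preserved by uniform convergence.

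**The main obstacle.**

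The hard part will be the uniform $L^2$ estimate on $\phi_n(v_n)$ near the singularity at $x=0$, and relatedly ensuring that $v_n$ does not oscillate back through $0$ (which would re-enter the singular region and destroy integrability control). I would handle the latter by establishing positivity of $v_n$ on an initial interval via the forbidden-region Lemma \ref{lemma36} (or its regularized analogue), and the former by the $\psi_\delta$-primitive estimate. A subtle point is that $\psi_\delta(v_n(0^+))$ must remain bounded as $n\to\infty$: since $v_n(0)=0$ but $\psi_\delta(0)=+\infty$, one cannot naively plug in $x=0$; instead one should integrate over $[\eta,L]$ and pass $\eta\to0$ carefully, exactly as in Proposition \ref{47bis}, so that the potentially divergent boundary term appears with the favorable sign. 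Controlling this boundary contribution uniformly in $n$ is the technical heart of the proof.
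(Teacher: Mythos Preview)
Your overall strategy — approximate $\phi$ by $\phi_n=T_n(\phi)\in C^0_b(\re)$, solve the regularized Cauchy problem, obtain uniform bounds, pass to the limit — is exactly the paper's approach (Proposition~\ref{54} plus Proposition~\ref{reason}, applied iteratively). However, you have misidentified the ``main obstacle'' and thereby made the estimate harder than it is.

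The right primitive is not $\psi_\delta(s)=\int_s^\delta\phi(t)\,dt$ (which blows up at $s=0$) but
\[
\psi(s)=\int_0^s\phi(t)\,dt,
\]
which is \emph{finite at $0$ with $\psi(0)=0$}, precisely by the integrability hypothesis \eqref{57n}. This is the content of Lemma~\ref{2.5bis}: for any $z\in H^1(0,L)$ with $\phi(z)\in L^2(0,L)$ one has $\int_0^L\phi(z)\frac{dz}{dx}=\psi(z(L))-\psi(z(0))$. Applied to a solution $v$ with $v(0)=0$, the boundary term at $x=0$ simply vanishes. Multiplying the equation by $\frac{dv}{dx}$ and integrating then gives
\[
\alpha\left\|\tfrac{dv}{dx}\right\|_{L^2}^2\le\psi(v(L))+\|h\|_{L^2}\left\|\tfrac{dv}{dx}\right\|_{L^2},
\]
and hypothesis \eqref{58n} yields $|\psi(v(L))|\le\|\phi\|_{L^1(-R,R)}+\|\phi\|_{L^\infty(\re\setminus[-R,R])}\,|v(L)|$, which closes via Morrey. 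This is Proposition~\ref{54}, and it gives a uniform $H^1$ bound directly; the $L^2$ bound on $\phi_n(v_n)$ then follows \emph{a posteriori} from the equation $\phi_n(v_n)=a\frac{dv_n}{dx}-h$, not from a separate argument. There is no delicate $\eta\to0$ limiting procedure and no ``potentially divergent boundary term with favorable sign'': you were importing the machinery of Proposition~\ref{47bis}, whose purpose (proving non-existence under \eqref{p+}) is the opposite of what is needed here.

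Your worry about $v_n$ oscillating back through $0$ is also unnecessary for existence: the a priori estimate above is sign-blind. Positivity is a separate result (Proposition~\ref{coneL}) requiring the extra hypothesis \eqref{gf} on $h$, which is not assumed in Theorem~\ref{shoot}.
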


The proof of the existence Theorem~\ref{shoot} is based on the two propositions \ref{54} and \ref{reason} below. Before  stating and proving these two propositions, let us state and prove a lemma which looks natural but is not so easy to obtain due to the possible singularity of the function $\phi$.

 Let us define the function  $\psi:\re\to\re$ by 
\begin{equation}\label{5102}
\psi (s)=\int_0^s \phi(t)dt,\ \ \forall\,s\in\re; 
\end{equation}
note that in view of  hypothesis  \eqref{57n} the function $\phi$ is integrable both  in $0^+$ and $0^-$, and that  the function  $\psi$ therefore satisfies
\begin{equation}\label{5103}
\psi \in W^{1,1}_{\mbox{\tiny loc}}(\re)\subset C^0 (\re)\ \ \text{with}\ \ \psi(0)=0\,.
\end{equation}

\begin{lemma}
\label{2.5bis}
Assume that \eqref{cond1} holds true, and that $\phi$ satisfies hypotheses \eqref{55n}-\eqref{58n}. Let $z$  satisfying  
\begin{equation}
\label{5.15bis}
z\in H^1(0,L) \mbox{ with } \phi(z)\in L^2(0,L).
\end{equation}
Then the function  $\psi$ defined by \eqref{5102} satisfies 
\begin{equation}
\label{5.15ter}
\psi(z)\in W^{1,1}(0,L) \mbox{ with }  \frac{d\psi(z)}{dx}= \phi(z) \frac{dz}{dx} \ \  \text{in} \ \ \mathcal{D}'(0,L), 
\end{equation}
a result which in particular implies that
\begin{equation}
\label{514bis}
\int_0^L \phi(z)\frac{dz}{dx} dx = \psi(z(L))-\psi(z(0)).
\end{equation}
\end{lemma}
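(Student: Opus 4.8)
The plan is to follow the truncation strategy already used in the proof of Lemma~\ref{prop24ter}, but now keeping track of the (in general nonzero) boundary values, since here $z\in H^1(0,L)$ rather than $z\in\huzu$, and the integrability hypothesis \eqref{57n} will play the role it did not play in Lemma~\ref{prop24ter}. For $n>0$ let $T_n$ be the truncation at height $n$ defined in \eqref{tk}, and set
\[
\psi_n(s)=\int_0^s T_n(\phi(t))\,dt,\qquad \forall s\in\re.
\]
Since $T_n(\phi)\in C^0_b(\re)$, the function $\psi_n$ belongs to $C^1(\re)$ with bounded derivative $\psi_n'=T_n(\phi)$, so the classical chain rule for a $C^1$ function with bounded derivative composed with an $H^1$ function gives $\psi_n(z)\in H^1(0,L)$ with
\[
\frac{d\psi_n(z)}{dx}=T_n(\phi(z))\frac{dz}{dx}\quad\text{in }\mathcal{D}'(0,L).
\]
The whole proof then consists in passing to the limit $n\to+\infty$ in this identity.

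For the right-hand side, I would first note that by \eqref{5.15bis} both $\phi(z)$ and $\frac{dz}{dx}$ belong to $L^2(0,L)$, hence $\phi(z)\frac{dz}{dx}\in L^1(0,L)$ by Cauchy--Schwarz. Since $T_n(\phi(z))\to\phi(z)$ a.e. and $|T_n(\phi(z))|\le|\phi(z)|$, Lebesgue's dominated convergence theorem yields that $T_n(\phi(z))\frac{dz}{dx}\to\phi(z)\frac{dz}{dx}$ strongly in $L^1(0,L)$, and a fortiori in $\mathcal{D}'(0,L)$.

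The key point, and the only place where the singularity of $\phi$ really matters, is the convergence of the left-hand side. Here hypotheses \eqref{57n} and \eqref{58n} together guarantee that $\phi\in L^1(-R,R)$ for every $R>0$ (integrable near $0$ by \eqref{57n}, bounded away from $0$ by \eqref{58n}), so that the continuous function $\psi$ of \eqref{5102}--\eqref{5103} is well defined. For $|s|\le R$ one estimates $|\psi_n(s)-\psi(s)|\le\int_{-R}^{R}|T_n(\phi(t))-\phi(t)|\,dt$, and the right-hand side tends to $0$ again by dominated convergence; hence $\psi_n\to\psi$ uniformly on every compact subset of $\re$. Since $z\in H^1(0,L)\subset L^\infty(0,L)$ by Morrey's embedding \eqref{rem210}, the values of $z$ lie in a fixed bounded interval, so $\psi_n(z)\to\psi(z)$ uniformly on $[0,L]$, in particular in $\mathcal{D}'(0,L)$.

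Passing to the limit and using the continuity of the distributional derivative then gives $\frac{d\psi(z)}{dx}=\phi(z)\frac{dz}{dx}$ in $\mathcal{D}'(0,L)$; since this derivative lies in $L^1(0,L)$ and $\psi(z)\in L^\infty(0,L)$, this is precisely \eqref{5.15ter}, i.e. $\psi(z)\in W^{1,1}(0,L)$. Finally, a $W^{1,1}(0,L)$ function is absolutely continuous on $[0,L]$, and $\psi(z)$ is continuous because both $\psi$ and $z$ are continuous; the fundamental theorem of calculus then yields $\int_0^L\phi(z)\frac{dz}{dx}\,dx=\psi(z(L))-\psi(z(0))$, which is \eqref{514bis}. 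The main obstacle is genuinely this left-hand limit: one must resist any attempt to apply a chain rule to $\psi$ directly, since $\psi$ is neither $C^1$ nor Lipschitz at $0$, and instead exploit only the $L^1$-integrability of $\phi$ near the singularity, which is exactly what \eqref{57n} provides.
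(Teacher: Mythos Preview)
Your proof is correct and follows essentially the same approach as the paper: truncate $\phi$ by $T_n$, apply the classical chain rule to the Lipschitz primitive $\psi_n$, and pass to the limit in $\mathcal{D}'(0,L)$ using dominated convergence on both sides. The only minor difference is that you establish $\psi_n(z)\to\psi(z)$ via uniform convergence of $\psi_n$ on compact sets (exploiting $z\in L^\infty$ from Morrey), whereas the paper first proves a linear growth bound $|\psi_n(s)|\le \|\phi\|_{L^1(-R,R)}+\|\phi\|_{L^\infty(\re\setminus[-R,R])}|s|$ and then combines pointwise convergence with this uniform bound; your route is slightly more direct for this lemma, though the paper's growth estimate is reused later in the a~priori estimate of Proposition~\ref{54}.
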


\begin{remark}
\label{rem2.5ter}
When $\phi(0)<+\infty$, the result \eqref{5.15ter} is classical since then $\phi \in C^0_b(\re)$ in view of  \eqref{55n} and \eqref{58n}. Indeed in this case one has $\psi'=\phi \in C^0_b(\re)$, which implies that $\psi\in C^1(\re)\cap Lip(\re)$; the classical chain rule in $H^1(0,L)$ then implies that 
$$
\psi(z)\in H^1(0,L) \,\, \mbox{ with }\,\, \frac{d\psi(z)}{dx}= \phi(z) \frac{dz}{dx} \ \  \text{in} \ \ \mathcal{D}'(0,L),
$$
which implies \eqref{5.15ter}.

\qed
\end{remark}
\begin{remark}
If $z\in\huzu$, the result \eqref{514bis} implies that 
\begin{equation*}
\int_0^L \phi(z)\frac{dz}{dx} dx = \psi(0)-\psi(0)=0-0=0.
\end{equation*}
This result is nothing but Lemma~\ref{prop24ter} above, which we recall was proven under the sole hypothesis \eqref{condphi1} (identical to \eqref{55n}) without making hypotheses \eqref{57n} and \eqref{58n} on $\phi$.

\qed
\end{remark}

\begin{proof}[\bf Proof of Lemma~\ref{2.5bis}]

\noindent{\bf Step 1.} Fix $R>0$. Then,  one has 
$$\psi(s)=\int_0^s \phi(t)dt=\int_0^{+R}  \phi(t)dt +\int_{+R}^s  \phi(t)dt, \quad \forall s\geq +R,
$$
which, thanks to \eqref{58n},  implies that 

\begin{equation}
\begin{cases}
\begin{split}
\dys |\psi(s)| &\leq \int_{0}^{+R}|\phi(t)| dt + \|\phi\|_{L^{\infty}(+R, +\infty)} \,\,(s-R)\leq \,\\ \\
\dys& \leq  \|\phi\|_{L^1 (0,+R)} +\|\phi\|_{L^{\infty}(+R, +\infty)} \,\, |s|\,,\ \forall\, s\geq +R,
\end{split}
\end{cases}
\end{equation}
a result which in fact holds true for every $s\geq 0$. 

Similarly,  one has, for $s\leq -R$ 
$$\psi(s)=\int_0^s \phi(t)dt=\int_0^{-R}  \phi(t)dt +\int_{-R}^s  \phi(t)dt, \quad \forall s\leq  -R,
$$
which implies that 

\begin{equation}
\begin{cases}
\begin{split}
\dys |\psi(s)| &\leq \int_{-R}^{0}|\phi(t)| dt + \|\phi\|_{L^{\infty}(-\infty, -R)} \,\,(-R-s) \leq \,\\ \\
\dys & \leq  \|\phi\|_{L^1 (-R,\, 0)} +\|\phi\|_{L^{\infty}(-\infty,-R)}\, |s|\,,\ \forall\, s\leq -R,
\end{split}
\end{cases}
\end{equation}
a result which in fact holds true for every  $s\leq 0$.

\indent These two results imply that when $\phi$ satisfies  \eqref{55n}-\eqref{58n}, one has 
\begin{equation}\label{5106}
 \dys |\psi (s)|\leq \|\phi\|_{L^1(-R,+R)}+\|\phi\|_{L^{\infty}(\re\backslash [-R,+R])} \,\, |s| \,,\ \  \dys \forall\, R>0,\, \ \ \forall\, s\in \re.\
\end{equation}

\noindent{\bf Step 2.} Let $T_n$ be the truncation at height $n$ defined by \eqref{tk}. Then $T_n(\phi)\in C^0_b(\re)$. 

Defining the function $\psi_n$ by 
\begin{equation}
\label{defpsin}
\psi_n(s)=\int_0^s T_n(\phi(t))\, dt,\quad \forall s\in\re,
\end{equation}
one has $\psi_n\in C^1 (\re)\cap Lip(\re)$, so that as observed in the Remark~\ref{rem2.5ter} where  $\phi\in C^0_b(\re)$, one has 
\begin{equation}
\label{513bis}
\dys\psi_n(z)\in H^1(0,L)\,\, \mbox{ with } \,\, \frac{d\psi_n(z)}{dx}= T_n(\phi(z)) \frac{dz}{dx} \ \  \text{in} \ \ \mathcal{D}'(0,L).
\end{equation}

Let us now use the fact that   $
\phi(z)\in L^2(0,L)
$.

Since
$$
|T_n(\phi(z(x)))|\leq |\phi(z(x))|, \,\mbox{ a.e. } \, x\in(0,L) \, \mbox{ with } \, T_n(r)\stackrel{n\to\infty}{\longrightarrow} r,\, \forall r\in\re,
$$
Lebesgue's dominated convergence theorem implies that
\begin{equation}
\label{5101b}
 T_n(\phi(z))\frac{dz}{dx} \to   \phi(z)\frac{dz}{dx}\,\, \mbox{ strongly in } \,\, L^1(0,L).
\end{equation}

On the other hand, in view of \eqref{5106}, one has

\begin{equation}
\begin{split}
\dys |\psi_n(s)|  & \leq \|T_n(\phi)\|_{L^1(-R,+R)} + \|T_n(\phi)\|_{L^\infty(\re\setminus [-R,+R])}|s|\leq \,\\ \\
\dys & \leq  \|\phi\|_{L^1 (-R,+R)} +\|\phi\|_{L^{\infty}(\re\setminus [-R,+R])}\, |s|, \quad \forall k>0,\,\,\forall s\in\re.
\end{split}
\end{equation}

Since $z\in H^1(0,L) \subset L^\infty(0,L)$, this implies that 
\begin{equation}
\label{517bis}
\psi_n(z) \,\,\mbox{ is bounded in }\,\, L^\infty(0,L).
\end{equation}

Also, for every $s\in\re$, one has by \eqref{57n}
$$
\phi\in L^1(0,s) \, \mbox{ if } \, s>0, \, \,\mbox{ and } \, \phi\in L^1(s,0) \, \mbox{ if } \, s<0,
$$
while 
\begin{equation*}
\begin{cases}
|T_n(\phi(t))|\leq |\phi(t)|\,, \forall\, t\in (0,s) \, \mbox{ if } \, s>0, \, \,\mbox{and } \, \forall t\in(s,0) \, \mbox{ if } \, s<0,\nonumber\\
\mbox{with} \, T_n(r)\stackrel{n\to\infty}{\longrightarrow} r,\, \forall r\in\re, 
\end{cases}
\end{equation*}
so that Lebesgue's dominated convergence theorem (in $L^1(0,s)$ and in $L^1(s,0)$) implies that 
\begin{equation}
\label{5102b}
\psi_n(s)=\int_0^s T_n(\phi(t)) dt\to \int_0^s \phi(t)dt=\psi(s),\quad \forall s\in\re.
\end{equation}

From \eqref{517bis} and \eqref{5102b} one deduces (using again Lebesgue's dominated convergence theorem) that 
\begin{equation}
\label{521bis}
\psi_n(z)\to \psi(z) \quad \mbox{strongly in } \quad L^p(0,L) \quad \forall p, \,\, 1\leq p< +\infty.
\end{equation}

This fact implies that $\dys\frac{d\psi_n(z)}{dx}\stackrel{n\to\infty}{\longrightarrow} \frac{d\psi(z)}{dx}$ in $\mathcal{D}'(0,L)$ and also, by \eqref{513bis} and \eqref{5101b}, strongly in $L^1(0,L)$; this  implies that $\dys\frac{d\psi(z)}{dx}= \phi(z) \frac{dz}{dx}$, and therefore that $\psi(z)\in W^{1,1}(0,L)$.

Lemma~\ref{2.5bis} is proved.

 \end{proof}

 \begin{proposition}[{{\bf A priori estimate}}]
\label{54}
Assume that \eqref{cond1} holds true, and that  the data $(a,h,\phi)$ satisfy hypotheses \eqref{52n}-\eqref{58n}.  If $v$ is any solution of the problem \eqref{formh}, then,  for any given $R>0$,  $v$ satisfies 
\begin{equation}\label{stimah}
\|v\|_{H^1(0,L)}\leq C_R\,,
\end{equation}
where  $C_R$ is given by 
\begin{equation}\label{520}
C_R= (L+ 1)\left(\frac{\sqrt{L}\|\phi\|_{L^{\infty}(\re\backslash [-R,+R])} + \|h\|_{L^2(0,L)}}{\alpha} +\frac{\sqrt{\|\phi\|_{L^1(-R,+R)}}}{\sqrt{\alpha}}\right)\,,
\end{equation}
which   depends only on $R$, ${L}$, $\alpha$, $\|h\|_{L^{2}(0,L)}$,   $ \|\phi\|_{L^{1}(-R,+R)} $,  and $\| \phi \|_{L^{\infty}(\re\backslash (-R,+R))} $. 
\end{proposition}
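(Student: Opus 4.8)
The plan is to test the ODE against its own derivative $\frac{dv}{dx}$, producing an energy identity whose only dangerous term is the singular one $\int_0^L\phi(v)\frac{dv}{dx}\,dx$, and then to convert the resulting inequality into a quadratic inequality for $\left\|\frac{dv}{dx}\right\|_{L^2(0,L)}$. First I would observe that the multiplication is legitimate: since $v\in H^1(0,L)$ gives $\frac{dv}{dx}\in L^2(0,L)$, and since $\phi(v)\in L^2(0,L)$, $h\in L^2(0,L)$ and $a\in L^\infty(0,L)$, the second line of \eqref{formh} is actually an identity between functions of $L^2(0,L)$. Multiplying it by $\frac{dv}{dx}\in L^2(0,L)$ and integrating over $(0,L)$ (all products lying in $L^1(0,L)$) yields
\begin{equation*}
\int_0^L a(x)\left(\frac{dv}{dx}\right)^2 dx = \int_0^L \phi(v)\frac{dv}{dx}\,dx + \int_0^L h\,\frac{dv}{dx}\,dx.
\end{equation*}

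The crucial step is the evaluation of the singular term via Lemma~\ref{2.5bis}: applying \eqref{514bis} with $z=v$ (whose hypotheses are exactly \eqref{5.15bis}) gives $\int_0^L \phi(v)\frac{dv}{dx}\,dx=\psi(v(L))-\psi(v(0))$, which reduces to $\psi(v(L))$ because $v(0)=0$ and $\psi(0)=0$ (see \eqref{5103}). Using the coercivity $\alpha\leq a(x)$ from \eqref{52n} on the left and the Cauchy--Schwarz inequality on the last term, and writing $Y=\left\|\frac{dv}{dx}\right\|_{L^2(0,L)}$, I obtain
\begin{equation*}
\alpha Y^2 \leq \psi(v(L)) + \|h\|_{L^2(0,L)}\,Y.
\end{equation*}

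Next I would estimate $\psi(v(L))$ through the linear-growth bound \eqref{5106}, namely $|\psi(v(L))|\leq \|\phi\|_{L^1(-R,+R)}+\|\phi\|_{L^\infty(\re\backslash[-R,+R])}\,|v(L)|$, and bound $|v(L)|=|v(L)-v(0)|\leq \sqrt{L}\,Y$ by Morrey's estimate \eqref{Morrey} together with $v(0)=0$. Setting $B=\sqrt{L}\,\|\phi\|_{L^\infty(\re\backslash[-R,+R])}+\|h\|_{L^2(0,L)}$ and $C=\|\phi\|_{L^1(-R,+R)}$, the previous display becomes $\alpha Y^2-BY-C\leq 0$. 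Its largest root controls $Y$, and the elementary bound $\sqrt{B^2+4\alpha C}\leq B+2\sqrt{\alpha C}$ gives $Y\leq \frac{B}{\alpha}+\frac{\sqrt{C}}{\sqrt{\alpha}}$, which is precisely the factor multiplying $(L+1)$ in \eqref{520}.

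Finally I would pass from $Y$ to the full $H^1$ norm. Since $v(0)=0$, Morrey's estimate gives $\|v\|_{L^\infty(0,L)}\leq \sqrt{L}\,Y$ and hence $\|v\|_{L^2(0,L)}\leq L\,Y$, so that (whichever equivalent $H^1(0,L)$-norm one adopts, using $\sqrt{L^2+1}\leq L+1$ in the Euclidean case) one has $\|v\|_{H^1(0,L)}\leq (L+1)Y$; combined with the bound on $Y$ this produces exactly $C_R$ in \eqref{520}. I expect the only genuinely delicate point to be the justification of the energy identity for the singular term, which is exactly where Lemma~\ref{2.5bis} is indispensable: without hypotheses \eqref{57n}--\eqref{58n} the potential $\psi$ defined in \eqref{5102} need neither be well defined nor satisfy the fundamental-theorem-of-calculus formula \eqref{514bis}. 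The remaining steps are the routine optimisation of a quadratic and standard one-dimensional Sobolev estimates.
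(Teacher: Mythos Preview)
Your proof is correct and follows essentially the same approach as the paper: multiply the ODE by $\frac{dv}{dx}$, invoke Lemma~\ref{2.5bis} to convert the singular term into $\psi(v(L))$, bound this via \eqref{5106} and Morrey's estimate, and then solve the resulting quadratic in $\left\|\frac{dv}{dx}\right\|_{L^2}$. The only cosmetic difference is that the paper passes to the full $H^1$-norm via the Poincar\'e inequality \eqref{5109b} directly rather than going through $L^\infty$, but both routes yield $\|v\|_{L^2(0,L)}\leq L\left\|\frac{dv}{dx}\right\|_{L^2(0,L)}$ and hence the same constant $(L+1)$.
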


\begin{proof}[\bf Proof of Proposition~\ref{54}] 

 Multiplying pointwise the second line of \eqref{formh} by $\displaystyle\frac{d v}{dx}$ and  integrating between $0$ and $L$,  we get
\begin{equation}\label{5101}
\int_0^{L} a(x)\left|\frac{d v}{dx}\right|^2 dx= \int_0^{L}\phi (v)\frac{d v}{dx}  dx+ \int_0^{L} h (x)\frac{dv}{dx} dx\,.
\end{equation}

Using in \eqref{5101} the coercivity of $a$ (see \eqref{52n}), the result \eqref{514bis}, $\psi(v(0))=\psi(0)=0$, and the  Cauchy-Schwartz inequality implies that

\begin{equation}\label{estima}
\dys \alpha\int_0^L \left|\frac{d v}{dx}\right|^2  dx \leq  |\psi (v (L))|  +\displaystyle\|h\|_{L^2(0,L)} \left\|\frac{d v}{dx}\right\|_{L^2(0,L)}.
\end{equation}

Since $v\in H^1 (0,L)$ with $v(0)=0$, \eqref{5106} combined with Morrey's estimate \eqref{rem210} (which continues to hold true for $z\in H^1(0,L)$ with $z(0)=0$ without assuming that $z(L)=0$),   yields 

\begin{equation}\label{5107}
\begin{split}
|\psi(v(L))| & \leq  \|\phi\|_{L^1(-R,+R)}+\|\phi\|_{L^{\infty}(\re\backslash [-R,+R])} \,\,|v(L)| \leq  \\ \\\dys & \leq    \|\phi\|_{L^1(-R,+R)}+\|\phi\|_{L^{\infty}(\re\backslash [-R,+R])}\sqrt{L}\left\|\frac{dv}{dx}\right\|_{L^2(0,L)}\,,\,\ \forall\, R>0.
\end{split}
\end{equation}

Turning back to \eqref{estima} we have proved that, for every $R>0$ 
\begin{eqnarray}\label{5108}
\dys \alpha\left\|\frac{dv}{dx}\right\|^2_{L^2(0,L)}   \leq \left(\sqrt{L}\|\phi\|_{L^{\infty}(\re\backslash [-R,+R])}+ \|h\|_{L^2(0,L)}\right) \left\|\frac{d v}{dx}\right\|_{L^2(0,L)}+  \|\phi\|_{L^1(-R,+R)}.\nonumber
\end{eqnarray}

From this inequality, using the fact that for  $\alpha>0$, $B>0$, $\Gamma> 0$, one has 
$$
\dys \alpha X^2 \leq BX+\Gamma, \ \  X\geq 0 \Longleftrightarrow 0\leq X\leq \frac{B+\sqrt{B^2 +4\alpha\Gamma}}{2\alpha}$$
and then the inequality
$$ \dys  \frac{B+\sqrt{B^2 +4\alpha\Gamma}}{2\alpha}\leq \frac{B+B +2\sqrt{\alpha \Gamma}}{2\alpha}=\frac{B}{\alpha} +\frac{\sqrt{\Gamma}}{\sqrt{\alpha}}, 
$$ 
one deduces that $v$ satisfies 
\begin{equation}\label{5109}
\left\|\frac{dv}{dx}\right\|_{L^2(0,L)}  \leq \frac{\sqrt{L}\|\phi\|_{L^{\infty}(\re\backslash [-R,+R])} + \|h\|_{L^2(0,L)}}{\alpha} +\frac{\sqrt{\|\phi\|_{L^1(-R,+R)}}}{\sqrt{\alpha}}\,, \forall \, R>0\,.
\end{equation}
Combined with the Poincar\'e inequality \eqref{Poincare} (which continues to hold true for $z\in H^1(0,L)$ with $z(0)=0$ without assuming that $z(L)=0$), namely 
\begin{equation}\label{5109b}
\|z\|_{L^2(0,L)}\leq {L}\left\|\frac{dz}{dx}\right\|_{L^2(0,L)},
\end{equation}
formula \eqref{5109} gives the desired a priori estimate \eqref{stimah} with $C_R$ given by \eqref{520}.

\end{proof}

\begin{remark}
Observe that as far as Lemma ~\ref{54} is concerned it is  an a priori estimate for any possible solution of the problem \eqref{formh}.

Observe that the proof of this result  is a proof in the spirit of an a priori estimate for a PDE, rather then for an ODE.

\qed
\end{remark}

\begin{proposition}[{{\bf Passage to the limit}}]\label{reason}
Assume that \eqref{cond1} holds true, and that  the data  $(a,h,\phi)$ satisfy  hypotheses \eqref{52n}--\eqref{58n}. Let $h_k$ be a sequence which satisfies 
\begin{equation}
\label{5.29bis}
 h_k\in L^2(0,L), \,\,\,\, h_k\to h \,\,\,\, \mbox{weakly in } \,\,L^2(0,L). 
 \end{equation}
  Let also $\phi_k$ be a sequence of reasonable approximations of $\phi$ which satisfies  \eqref{55n}--\eqref{58n} for every $k$. Assume that  there exists some $R^*>0$ and  some $C^*>0$ such that, one has 
{\small\begin{equation}\label{525}
 (L+ 1)\left(\frac{\sqrt{L}\|\phi_k\|_{L^{\infty}(\re\backslash [-R^*, +R^*])} + \|h_k\|_{L^2(0,L)}}{\alpha} +\frac{\sqrt{\|\phi_k\|_{L^1(-R^*,+R^*)}}}{\sqrt{\alpha}}\right)\leq C^*, \quad \forall k\,.
\end{equation}}

Consider a sequence of solutions $v_k$ of the ODE  problem
\begin{equation}\label{happrox}
\begin{cases}
 v_k\in H^1(0,L), \,\,\phi_k(v_k)\in L^2(0,L) \,,\\
\dys a (x) \frac{dv_k}{dx}   =  {\phi_k} (v_k) + h_k \ \  \text{in} \ \ \mathcal{D}'(0,L),\\
\dys v_k(0)=0\,. & 
\end{cases}
\end{equation}

Then there exists a function $v$ and a subsequence, denoted by $v_{k'}$, such that \begin{equation}\label{525b}v_{k'}\rightharpoonup v \ \ \text{weakly in}\ \ H^1(0,L),\end{equation} where $v$ is a solution of the ODE
\begin{equation}
\label{lem5733}
\begin{cases}
 v\in H^1(0,L), \,\,\phi(v)\in L^2(0,L) \,,\\
\dys a (x) \frac{dv}{dx}   =  {\phi} (v) + h \ \  \text{in} \ \ \mathcal{D}'(0,L),\\
\dys v(0)=0\,. & 
\end{cases}
\end{equation}

Moreover, if further hypothesis to \eqref{5.29bis}, the sequence $h_k$ satisfies 
\begin{equation}
\label{5.32bis}
h_k\rightarrow h \,\,\,\, \mbox{ strongly in } \,\,L^2(0,L),
\end{equation}
then, further to \eqref{525b}, one has 
\begin{equation}
\label{5.32ter}
v_{k'}\rightarrow v \,\,\,\, \mbox{ strongly in }\,\, \huzu.
\end{equation}

\end{proposition}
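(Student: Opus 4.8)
The plan is to combine the a priori estimate of Proposition~\ref{54} with a weak compactness argument, then identify the limit equation, and finally upgrade weak to strong convergence through an energy identity. First I would note that, applying Proposition~\ref{54} to each $v_k$ (which solves \eqref{happrox} for the data $(a,h_k,\phi_k)$) together with the uniform bound \eqref{525} taken at $R=R^*$, the sequence $v_k$ is bounded in $H^1(0,L)$ by $C^*$, uniformly in $k$. Hence a subsequence $v_{k'}$ converges weakly in $H^1(0,L)$ to some $v$, and by Morrey's compact embedding \eqref{Morrey} the convergence is uniform on $[0,L]$; in particular $v(0)=\lim v_{k'}(0)=0$ and \eqref{525b} holds. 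Moreover, since $a\in L^\infty(0,L)$ is fixed while $v_{k'}'\rightharpoonup v'$ and $h_{k'}\rightharpoonup h$ weakly in $L^2(0,L)$, rewriting the equation of \eqref{happrox} as $\phi_{k'}(v_{k'})=a\,v_{k'}'-h_{k'}$ shows that $\phi_{k'}(v_{k'})$ is bounded in $L^2(0,L)$ and converges weakly in $L^2(0,L)$ to $\xi:=a\,v'-h$. Thus $v$ already satisfies $a\,v'=\xi+h$ in $\mathcal{D}'(0,L)$ with $\xi\in L^2(0,L)$, and it only remains to identify $\xi=\phi(v)$.

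The identification rests on two facts. By the reasonable-approximation property \eqref{phincond2}, the uniform convergence $v_{k'}\to v$ gives $\phi_{k'}(v_{k'}(x))\to\phi(v(x))$ in $\re\cup\{+\infty\}$ for every $x$. I would first rule out that $Z:=\{x\in[0,L]:v(x)=0\}$ has positive measure. If $|Z|>0$, then testing $\phi_{k'}(v_{k'})\rightharpoonup\xi$ against $\car{Z}\in L^2(0,L)$ gives $\int_Z\phi_{k'}(v_{k'})\to\int_Z\xi<+\infty$; on the other hand $v_{k'}\to 0$ uniformly on $Z$, so for each $\eta>0$ and $k'$ large one has $\phi_{k'}(v_{k'})\geq \inf_{t\in[-\eta,+\eta]}\phi_{k'}(t)$ on $Z$, whence $\liminf_{k'}\int_Z\phi_{k'}(v_{k'})\geq |Z|\,\liminf_{k'}\inf_{t\in[-\eta,+\eta]}\phi_{k'}(t)$, which tends to $+\infty$ as $\eta\to0^+$ by \eqref{phincond4}, a contradiction. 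Hence $|Z|=0$, so $\phi(v)$ is finite a.e., $\phi_{k'}(v_{k'})\to\phi(v)$ a.e., and being bounded in $L^2$ it converges weakly to $\phi(v)$ by the Vitali argument already used in Lemma~\ref{weakconvergencephin}. By uniqueness of weak limits $\xi=\phi(v)\in L^2(0,L)$, so $v$ solves \eqref{lem5733}.

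For the strong convergence under \eqref{5.32bis} I would use the energy identity obtained by multiplying the ODE by $v_{k'}'$ and integrating, combined with the chain rule of Lemma~\ref{2.5bis} (formula \eqref{514bis}) and $v_{k'}(0)=0$, $\psi_{k'}(0)=0$:
\begin{equation*}
\int_0^L a\,|v_{k'}'|^2\,dx=\psi_{k'}(v_{k'}(L))+\int_0^L h_{k'}\,v_{k'}'\,dx,
\end{equation*}
together with the analogous identity $\int_0^L a\,|v'|^2=\psi(v(L))+\int_0^L h\,v'$ for the limit. The term $\int_0^L h_{k'}v_{k'}'\to\int_0^L h\,v'$ since $h_{k'}\to h$ strongly and $v_{k'}'\rightharpoonup v'$. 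Provided one knows $\psi_{k'}(v_{k'}(L))\to\psi(v(L))$, it follows that $\int_0^L a|v_{k'}'|^2\to\int_0^L a|v'|^2$; expanding $\int_0^L a\,|v_{k'}'-v'|^2$ and using the coercivity $a\geq\alpha$ together with $a\,v_{k'}'\rightharpoonup a\,v'$ then yields $v_{k'}'\to v'$ strongly in $L^2(0,L)$, which with $v_{k'}(L)\to v(L)$ gives \eqref{5.32ter}.

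The main obstacle is precisely the convergence $\psi_{k'}(v_{k'}(L))\to\psi(v(L))$. Since $v_{k'}(L)\to v(L)$, this reduces to the convergence of the primitives $\psi_k(s)=\int_0^s\phi_k$ towards $\psi(s)=\int_0^s\phi$, uniformly near $s=v(L)$. Away from $0$ this is immediate from the uniform convergence \eqref{phincond3}, so the only delicate point is the singularity $s=0$: one must ensure that no $L^1$-mass of $\phi_k$ concentrates at $0$, i.e. that $\int_0^{\delta}\phi_k\to\int_0^{\delta}\phi$. This I would obtain from the uniform bound on $\|\phi_k\|_{L^1(-R^*,+R^*)}$ built into \eqref{525}, the integrability \eqref{57n} of $\phi$ near $0$, and the a.e. convergence $\phi_k\to\phi$ on $(0,\delta)$, through a Vitali/dominated-convergence argument; for the standard reasonable approximations (the truncations $T_n(\phi)$ or the homographic ones) $\phi_k$ is dominated by $|\phi|$ near $0$, which makes this last step transparent.
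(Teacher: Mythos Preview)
Your approach mirrors the paper's almost exactly: a priori estimate $\Rightarrow$ weak $H^1$ compactness, rewrite $\phi_{k'}(v_{k'})=a\,v_{k'}'-h_{k'}$ to get an $L^2$ bound on $\phi_{k'}(v_{k'})$, identify the limit via the reasonable-approximation property, and for strong convergence pass to the limit in the energy identity $\int_0^L a|v_{k'}'|^2=\psi_{k'}(v_{k'}(L))+\int_0^L h_{k'}v_{k'}'$. The paper is terser: it does not isolate your $|Z|=0$ step (this is implicit, since a.e.\ convergence of $\phi_{k'}(v_{k'})$ to $+\infty$ on a set of positive measure is incompatible with the $L^2$ bound), and for the strong part it simply asserts that $\psi_{k'}(v_{k'}(L))\to\psi(v(L))$ follows from $v_{k'}(L)\to v(L)$ and the reasonable-approximation property.

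Your instinct to flag this last convergence as the real obstacle is sound, and in fact the argument you sketch does not close in full generality: a uniform bound on $\|\phi_k\|_{L^1(-R^*,R^*)}$ together with a.e.\ convergence is \emph{not} enough for Vitali, since equi-integrability near $0$ can fail (one can add to $\phi$ a continuous bump of unit mass concentrating at $0$ without destroying either \eqref{525} or the reasonable-approximation property, and then $\psi_k(s)\to\psi(s)+1$ for $s>0$). The paper's one-line justification has the same gap. What does work, and what you correctly point out at the end, is that for the concrete approximations actually used in the paper (truncations $T_n(\phi)$, the modifications $\phi_{n,m}$, convolutions $\phi_{n,m,\varepsilon}$, and of course the constant sequence $\phi_k=\phi$) one has $|\phi_k|\le|\phi|$ near $0$, so dominated convergence gives $\psi_k\to\psi$ pointwise and the energy argument goes through. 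Since these are the only instances where Proposition~\ref{reason} is invoked, your proof---with the domination remark made explicit---covers everything the paper needs.
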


\begin{proof}[\bf Proof]

In view of \eqref{stimah}, \eqref{520}, and \eqref{525} we have 
$$
\|v_k\|_{H^1(0,L)}\leq C^*\,. 
$$
Therefore,  there exists  a $v\in H^1(0,L)$ and a subsequence, denoted by $v_{k'}$, such that
$$
v_{k'}\to v\   \text{weakly in} \  H^1(0,L),\ \text{strongly in }  L^2(0,L), \text{and a.e. } \mbox{ in } (0,L). 
$$
Moreover, 
$$
 \phi_k (v_k) = a (x) \frac{d v_k}{dx} - h_k \ \ \text{is bounded in }  L^2(0,L),
$$
so that, since $\phi_k$ is a  sequence of reasonable approximations of $\phi$, one gets 
$$
\phi_{k'} (v_{k'}\bk) \rightharpoonup \phi(v) \ \ \text{ a.e. in $(0,L)$ and weakly in } L^2(0,L),
$$
which allows one to pass to the limit in  \eqref{happrox} and to obtain \eqref{lem5733}.  

Assume now hypothesis \eqref{5.32bis}, namely that the sequence $h_k$ converges strongly in $L^2(0,L)$. Multiplying pointwise the equation in \eqref{happrox} by  $\dys\frac{dv_k}{dx}$ and integrating on $(0,L)$ we get
\begin{equation*}
\int_0^{L} a(x) \frac{d v_k}{dx} \frac{d v_k}{dx} dx= \int_0^{L}\phi_k (v_k)\frac{d v_k}{dx}  dx+ \int_0^{L} h_k (x)\frac{dv_k}{dx} dx\,.
\end{equation*}

Define now in this proof the function $\psi_k$ by 
$$
\psi_k(s)=\int_0^s \phi_k(t)\,dt,\,\,\forall k\in\re,
$$
(please do not make confusion with the function $\psi_n$ defined by \eqref{defpsin}), and using formula \eqref{5106} and $\psi_k(v_k(0))=\psi_k(0)=0$, we get 
\begin{equation}
\label{5.5555}
\int_0^{L} a(x) \frac{d v_k}{dx} \frac{d v_k}{dx} dx= \psi_k(v_k(L))+ \int_0^{L} h_k (x)\frac{dv_k}{dx} dx\,.
\end{equation}
Since the subsequence $v_{k'}$ converges weakly in $\huzu$ (see \eqref{525b}) and therefore by Morrey's embedding in $C^0([0,L])$ strongly, since $\phi_k$ is a sequence of reasonable approximations of $\phi$, and since the sequence $h_k$ is assumed to converge strongly to $h$ in $L^2(0,L)$ (see \eqref{5.29bis}), the right hand side of \eqref{5.5555} converges to  
$$
\psi(v(L))+\int_0^{L} h (x)\frac{dv}{dx} dx,
$$
which is nothing but 
$$
\int_0^{L} a(x) \frac{d v}{dx} \frac{d v}{dx} dx
$$
(multiply pointwise the second line of \eqref{formh} by $\dys \frac{dv}{dx}$ and integrate between $0$ and $L$), we have proved that 
\begin{equation}
\label{5.5556}
\int_0^L a(x) \frac{d v_{k'}}{dx} \frac{d v_{k'}}{dx}\,dx\to \int_0^L a(x) \frac{d v}{dx} \frac{dv}{dx}\,dx. 
\end{equation}
Passing to the limit in 
$$
\int_0^L a(x) \left(\frac{d v_{k'}}{dx} -\frac{d v}{dx}\right)\left(\frac{d v_{k'}}{dx} -\frac{d v}{dx}\right)\,dx\geq \alpha \int_0^L \left(\frac{d v_{k'}}{dx} -\frac{d v}{dx}\right)^2\,dx
$$
with the help of \eqref{525b} and \eqref{5.5556} proves \eqref{5.32ter}.

Proposition~\ref{reason} is proved.

\end{proof}
\begin{proof}[\bf Proof of Theorem \ref{shoot}] 
In order to prove the existence theorem \ref{shoot} we will  apply three times Proposition \ref{reason} to the following sequences of  approximations: 

$\bullet$ Firstly,  for any $n\in \mathbb{N}$,  we define $\phi_n: \re\to\re$  by  
$$
\phi_n(s)= T_n(\phi(s))\,,
$$
where $T_n$ is the truncation at level $n$ defined by \eqref{tk}. 

$\bullet$ Then, for $n$ fixed,  for any $m\in \mathbb{N}$,  we define $\phi_{n,m}: \re\to\re$  by
$$
\phi_{n,m} (s) = 
\begin{cases}
\phi_n(-m) & \text{if}\ s<-m, \\
\phi_n(s) & \text{if}\ |s|\leq m,\\
\phi_n(m) & \text{if}\ s>m\,.
\end{cases}
$$

$\bullet$ Finally, for $n$ and $m$ fixed,  for any $\varepsilon>0$,  we define  $\phi_{n,m, \varepsilon}: \re\to\re$  by
$$
\phi_{n,m,\varepsilon}=  \phi_{n,m}* \rho_{\varepsilon}, 
$$
where $ \rho_{\varepsilon} $ is a standard sequence of mollifiers.

We will first pass to the limit in $\varepsilon$ for $n$ and $m$ fixed.

Recalling the fifth example in Remark~\ref{examples}, one observes that the function  $\phi_{n,m}\in C^0(\re)$ and that the function $\phi_{n,m,\varepsilon} \in Lip(\re)$. Therefore there exists a unique solution $v_{n,m, \varepsilon}$ of problem \eqref{formh} for the function $\phi_{n,m,\varepsilon}$.  For $n$ and $m$  fixed,  the sequence $\phi_{n,m,\varepsilon}$ is a sequence of reasonable approximations of $\phi_{n,m}$, which satisfies 
$$
\|\phi_{n,m,\varepsilon}\|_{L^{\infty}(\re)}\leq \|\phi_{n,m}\|_{L^{\infty}(\re)}\leq n. 
$$
Therefore, for $n$ and $m$ fixed,  \eqref{525} is satisfied with a constant $C^*$ given by  
$$(L+ 1)\left(\frac{\sqrt{L}n + \|h\|_{L^2(0,L)}}{\alpha} +\frac{\sqrt{ 2R^*n}}{\sqrt{\alpha}}\right).$$ 

\noindent An application of  Proposition~\ref{reason} proves the existence of a solution $v_{n,m}$ of problem \eqref{formh} for the function $\phi_{n,m}$.

We will then pass to the limit in $m$ for $n$ fixed.

For $n$ fixed,  the sequence  $\phi_{n,m}$ is a sequence of  reasonable approximations of $\phi_{n}$, which satisfies, for $m>R^*$,  
$$
\phi_{n,m}=\phi_n \,\,\mbox{ in } \,\, (-R^*,+R^*),
$$
$$
\|\phi_{n,m}\|_{L\infty(\re\setminus[-R^*,+R^*])} \leq  \|\phi_n\|_{L^\infty(\re\setminus[-R^*,+R^*])}. 
$$

Therefore, for $n$ fixed and $m>R^*$, \eqref{525} is satisfied with a constant $C^*$ given by
$$
(L+ 1)\left(\frac{\sqrt{L}\|\phi_n\|_{L^{\infty}(\re\backslash [-R^*,+R^*])} + \|h\|_{L^2(0,L)}}{\alpha} +\frac{\sqrt{\|\phi_n\|_{L^1(-R^*,+R^*)}}}{\sqrt{\alpha}}\right)\,.
$$

\noindent An application of Proposition~\ref{reason} proves the existence of a solution $v_{n}$ of problem \eqref{formh} for the function $\phi_{n}$.

We will finally pass to the limit in $n$.

The sequence $\phi_{n}$ is a sequence of reasonable approximations of $\phi$ (recall the first example in Remark~\ref{examples}), which satisfies 
$$
 |\phi_{n}(s)|\leq |\phi(s)|, \quad \forall s\in\re.  
$$

Therefore \eqref{525}  is satisfied with a constant $C^*$ given by 
$$
 (L+ 1)\left(\frac{\sqrt{L}\|\phi\|_{L^{\infty}(\re\backslash [-R^*,+R^*])} + \|h\|_{L^2(0,L)}}{\alpha} +\frac{\sqrt{\|\phi\|_{L^1(-R^*,+R^*)}}}{\sqrt{\alpha}}\right)\,.
$$

\noindent An application of  Proposition~\ref{reason} proves the existence of a solution $v$ of problem \eqref{formh} for the function $\phi$, namely the   Theorem~\ref{shoot}.

\end{proof}

 \subsection{Positivity, comparison and  uniqueness}
 
 \label{uniquenessODE}
 
 In this subsection, further to hypotheses \eqref{52n}--\eqref{58n}  on the data $(a,g,\phi)$, we shall  assume that 
 \begin{equation}
 \label{533bis}
 \phi(0)=+\infty,
 \end{equation}
 i.e. that $\phi$ is singular in $s=0$, and that 
 \begin{equation}\label{gf}
\forall \ \eta\in (0,L),\ \ \exists \, M_\eta >0, \ \text{such that}\ \ h(x)\geq -M_\eta\ \ \forall\  x\in [0,L-\eta];
\end{equation}
 i.e. that $h$  may  blow down to $-\infty$ only for  $x=L$.

\begin{remark}
Observe that  hypothesis \eqref{gf} looks similar to hypothesis \eqref{glim} of Theorem~\ref{36}, but actually implies very different consequences. Indeed in \eqref{gf} 
 one assumes that $\eta>0$  while, in some sense in  Theorem~\ref{36}, one assumes $\eta=0$ (an assumption that implied  that 
 problem \eqref{pb1} does not have any weak solution in the sense of Definition~\ref{defin}).
 
 \qed
\end{remark}

\begin{proposition}[{{\bf Positivity}}]\label{coneL}
Assume that \eqref{cond1} holds true, and that  the data  $(a,h,\phi)$ satisfy  hypotheses \eqref{52n}--\eqref{58n}  and \eqref{533bis}--\eqref{gf}. 

Then any solution $v$  of the ODE problem \eqref{formh} satisfies 
\begin{equation}
\label{5.1000}
v(x)>0,\quad  \forall x\in(0,L). 
\end{equation}

\end{proposition}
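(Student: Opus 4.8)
The plan is to argue by contradiction, in the very same spirit as the proof of Theorem~\ref{36}, the essential tool being the forbidden region Lemma~\ref{lemma36}. The point to keep in mind throughout is that hypothesis \eqref{gf} only guarantees $h\ge -M_\eta$ on intervals $[0,L-\eta]$ with $\eta>0$. Consequently Lemma~\ref{lemma36} may be applied at any zero $x_0\in[0,L)$ of $v$, but not necessarily at $x_0=L$; this is precisely why the conclusion \eqref{5.1000} concerns the \emph{open} interval $(0,L)$ and cannot be upgraded into a sign change forced at $L$ (which is what produced non-existence in Theorem~\ref{36}). The singularity \eqref{533bis}, $\phi(0)=+\infty$, enters only through Lemma~\ref{lemma36}, which already packages all the analytic content: near any zero of $v$ the graph of $v$ cannot enter the forbidden region, so $v$ is strictly increasing across the zero, negative just to its left and positive just to its right.

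First I would establish that $v$ is positive immediately to the right of $0$. Since $v(0)=0$ and $v$ solves \eqref{formh}, the data $(a,h,\phi)$ and the function $w=v$ satisfy the hypotheses \eqref{formLemma} of Lemma~\ref{lemma36} on $[A,B]=[0,L-\eta_0]$ for any fixed $\eta_0\in(0,L)$, because \eqref{gf} gives $h\ge -M_{\eta_0}$ there. Applying the Lemma with $x_0=0$ and (say) $k=1$ yields some $\delta_0>0$, which we may take smaller than $L-\eta_0$, such that $v(x)\ge x>0$ for all $x\in(0,\delta_0]$.

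Next, suppose for contradiction that \eqref{5.1000} fails, i.e. that there is $\bar x\in(0,L)$ with $v(\bar x)\le 0$. Since $v\in H^1(0,L)\subset C^0([0,L])$ by Morrey's embedding \eqref{Morrey}, and since $v>0$ on $(0,\delta_0]$, necessarily $\bar x>\delta_0$; moreover, from $v(\delta_0)>0\ge v(\bar x)$ the intermediate value theorem guarantees that the set $X=\{x\in[\delta_0,\bar x]:v(x)=0\}$ is non-empty. Set $y=\inf X$. By continuity $v(y)=0$, while $v>0$ on $[\delta_0,y)$ and $y>\delta_0$ (because $v(\delta_0)>0$). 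Crucially, $y\le\bar x<L$, so $y$ is an interior zero at which the forbidden-region estimate will again be available.

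Finally, since $y<L$, I would choose $\eta$ with $0<\eta<L-y$, so that \eqref{gf} gives $h\ge -M_\eta$ on $[0,L-\eta]\supseteq[\delta_0,y]$, and apply Lemma~\ref{lemma36} once more with $w=v$, $l=h$, $A=\delta_0$, $B=y$ and $x_0=y$. The third line of its conclusion \eqref{FC} then provides, for any $k>0$, a number $\delta_y>0$ such that $v(x)\le k(x-y)<0$ for every $x\in[y-\delta_y,y)\cap[\delta_0,y]$; as $y>\delta_0$ this left neighbourhood is non-empty, yet it is contained in $[\delta_0,y)$, where $v>0$ — a contradiction. Hence no such $\bar x$ can exist, and $v(x)>0$ for all $x\in(0,L)$, which is \eqref{5.1000}. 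The argument is essentially routine once Lemma~\ref{lemma36} is in hand; the only genuine care needed — and the sole ``obstacle'' — is to invoke \eqref{gf} at the correct scale $\eta<L-y$ so that the Lemma remains applicable at every interior zero, precisely the feature that distinguishes this positivity statement from the non-existence phenomenon of Theorem~\ref{36}.
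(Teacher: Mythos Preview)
Your proof is correct and follows essentially the same route as the paper's: apply Lemma~\ref{lemma36} at $x_0=0$ to get $v>0$ on a right neighbourhood of $0$, take the first interior zero $y$, and apply Lemma~\ref{lemma36} again at $x_0=y$ to force $v<0$ just to the left of $y$, contradicting the minimality of $y$. If anything you are slightly more careful than the paper in choosing $[A,B]\subset[0,L-\eta]$ so that \eqref{gf} literally verifies hypothesis \eqref{gsat}; the paper simply takes $A=0$, $B=L$ throughout, which works because the proof of Lemma~\ref{lemma36} only uses the lower bound on $l$ in a neighbourhood of $x_0$.
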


\begin{figure}[htbp]\centering
\includegraphics[width=4.5in]{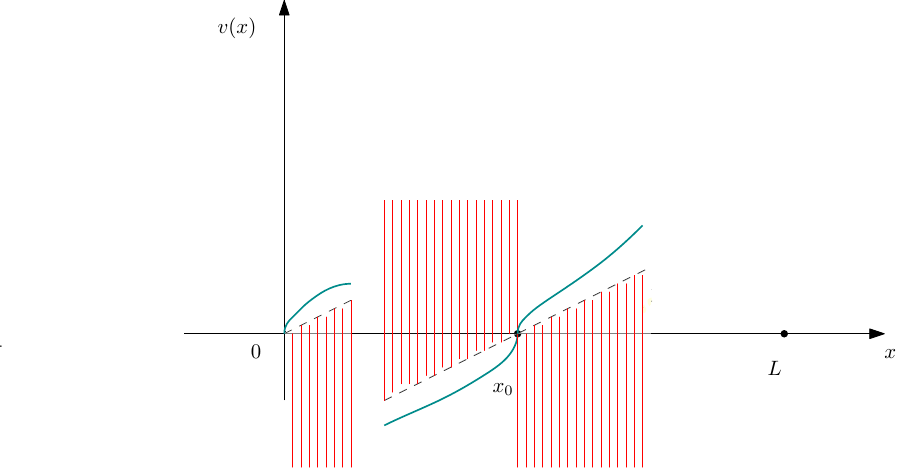}
\caption{The forbidden region in red}
\end{figure}

\begin{proof}[\bf Proof] 
Since all the hypotheses of the existence Theorem~\ref{shoot} are assumed in the statement of Proposition~\ref{coneL}, the ODE problem \eqref{formh} has at least a solution $v$.

Apply Lemma~\ref{lemma36} with   $w=v$, $l=h$, $A=0$, $B=L $,  and $x_0=0$; this is licit in view of hypothesis \eqref{gf}. Fixing any $k>0$, one obtains that for some $\delta_0>0$ with $\delta_0< L$, one has 
\begin{equation}
\label{5.101}
v(x)\geq k{x}>0,\ \   \forall\, x\in(0,\delta_0]\,.
\end{equation}

The  function $v\in C^0([0,\delta_0])$ satisfies \eqref{5.101}. Then one has the following alternative:  either 
\begin{equation}
\label{5.102}
v(x)>0,\ \   \forall\, x\in(0,L),
\end{equation}
or
\begin{equation}
\label{5.103}
\exists \hat{x},\quad \hat{x}\in(\delta_0,L),\quad v(\hat{x}) = 0.
\end{equation}

In the second case, we define the set $X$ and the number $y$ by 
$$
X=\{x:\,x\in[\delta_0,L],\,u(x)=0\}
$$
$$
y=\inf_{x\in X}x.
$$

The set $ {X}$ is non-empty, since $\hat{x}\in {X}$; therefore ${y}$ is correctly defined, $y>\delta_0$, and reasoning as in the proof of Theorem \ref{36}, $y$ is actually a minimum, i.e. $v(y)=0$.

Apply now  Lemma~\ref{lemma36} with  $w=v$, $l=h$, $A=0$, $B=L$,  and $x_0={y}$. Fixing any $k>0$, one obtains that for some $\delta_{ {y}}>0$, one has 
\begin{equation*}
v(x)\leq k(x- {y})<0,\ \   \forall\, x\in( {y}-\delta_{ {y}}, {y})\,.
\end{equation*}
a contradiction with the definitions of the set ${X}$ and of ${y}$.

Therefore the second possibility \eqref{5.103} is impossible and so  \eqref{5.102} holds, i.e. 
$$
v(x)>0,\ \   \forall\, x\in(0,L ).
$$ 

This  completes the proof of Proposition~\ref{coneL}.

\end{proof}

\begin{remark}\label{511} 
Proposition \ref{shoot} can be  analogously proved   for the "backward" case (starting from $L$) by performing the change of variable $y=L-x$. In this case, a corresponding result of the one in Lemma \ref{coneL} can be proved provided one assume \eqref{gf}  in $[\eta,L]$ instead of $[0,L-\eta]$; as an application of Lemma~\ref{lemma36} in this case the solution is negative and it cannot be zero up to possibly $x=0$.  \qed
\end{remark}

In the rest of this section, further to hypotheses \eqref{52n}--\eqref{58n} and  \eqref{533bis}--\eqref{gf}, we also assume that $\phi(s)$ is monotone non-increasing for  $s>0$, i.e. that: 
\begin{equation}
\label{monot}
\phi(s)\geq \phi(t) \ \ \text{for} \ \ \ 0\leq s\leq t\,.
\end{equation}

\begin{proposition}[{{\bf Comparison and uniqueness}}]\label{compar}
Assume that \eqref{cond1} holds true, and that  the data  $(a,h_1,\phi)$ and $(a,h_2,\phi)$ satisfy  hypotheses \eqref{52n}--\eqref{58n},  \eqref{533bis}--\eqref{gf}, and \eqref{monot}. Let $v_1$ and $v_2$ be solutions of problem \eqref{formh} for the data $h_1$ and $h_2$. Assume that  \begin{equation}\label{h1h2}h_1\leq h_2\,,\end{equation}
then one has  \begin{equation}v_1\leq v_2.\end{equation}

This comparison result immediatly implies that the solution of ODE problem \eqref{formh} is unique.
\end{proposition}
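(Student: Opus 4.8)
The plan is to prove the comparison $v_1\le v_2$ by contradiction, exploiting the monotonicity hypothesis \eqref{monot} together with the strict positivity of the solutions. First I would record that, since the data $(a,h_i,\phi)$ satisfy all the hypotheses of the positivity Proposition~\ref{coneL}, both solutions are strictly positive, namely $v_i(x)>0$ for every $x\in(0,L)$ and $i=1,2$. I would also note that, because $v_1,v_2\in H^1(0,L)$ and $a$ is bounded below by $\alpha>0$, the equation in \eqref{formh} holds a.e.\ in $(0,L)$, so that the difference $w=v_1-v_2\in H^1(0,L)$ is absolutely continuous and satisfies
$$a(x)\,w'(x)=\big(\phi(v_1(x))-\phi(v_2(x))\big)+\big(h_1(x)-h_2(x)\big)\quad\text{a.e. }x\in(0,L).$$

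Next, I would assume for contradiction that $w(\bar x)>0$ for some $\bar x\in(0,L]$. Since $w(0)=v_1(0)-v_2(0)=0$ and $w$ is continuous, the number $x_1=\sup\{x\in[0,\bar x):w(x)=0\}$ is well defined, satisfies $0\le x_1<\bar x$ and $w(x_1)=0$, and $w>0$ on $(x_1,\bar x]$. On the whole open interval $(x_1,\bar x)\subset(0,L)$ one then has $v_1(x)>v_2(x)>0$, the strict inequality $v_2>0$ coming precisely from Proposition~\ref{coneL}. This is the decisive point: because both values lie in the region $s>0$ where the monotonicity is available, I can apply \eqref{monot} with $s=v_2(x)$ and $t=v_1(x)$ to get $\phi(v_1(x))\le\phi(v_2(x))$, hence $\phi(v_1)-\phi(v_2)\le 0$ there; combined with $h_1-h_2\le 0$ from \eqref{h1h2} and $a(x)\ge\alpha>0$, this forces $w'(x)\le 0$ a.e.\ on $(x_1,\bar x)$. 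Integrating the absolutely continuous function $w$ then gives $w(\bar x)=w(x_1)+\int_{x_1}^{\bar x}w'(t)\,dt\le w(x_1)=0$, contradicting $w(\bar x)>0$. Therefore no such $\bar x$ exists and $v_1\le v_2$ on $[0,L]$. The uniqueness claim follows immediately: taking $h_1=h_2=h$ and applying the comparison twice yields both $v_1\le v_2$ and $v_2\le v_1$, whence $v_1=v_2$.

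The main delicacy I anticipate is the behaviour near the common zero at $x=0$, where $\phi(0)=+\infty$ makes the equation genuinely singular and where \eqref{monot} cannot be invoked at the value $s=0$ itself. The argument sidesteps this difficulty precisely because the positivity result confines the entire comparison to the open set $\{w>0\}\subset(0,L)$, on which $v_2$ is strictly positive, so the singular value is never actually used; the only care needed is that the interval $(x_1,\bar x)$ never has $x=0$ as an interior point, which is guaranteed by the continuity of $w$ and the initial condition $w(0)=0$. Thus the positivity Proposition~\ref{coneL} is exactly what makes the otherwise standard ODE comparison argument go through in this singular setting.
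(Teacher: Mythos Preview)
Your proof is correct and follows essentially the same approach as the paper: both invoke Proposition~\ref{coneL} to ensure $v_1,v_2>0$ on $(0,L)$ so that the monotonicity \eqref{monot} applies, then use $h_1\le h_2$ to force the difference to be nonincreasing on the set where it is positive. The only cosmetic difference is that the paper multiplies the difference equation pointwise by $(v_1-v_2)^+$ to obtain $\frac{a(x)}{2}\frac{d}{dx}\big((v_1-v_2)^+\big)^2\le 0$ globally, whereas you isolate a subinterval $(x_1,\bar x)$ and argue by contradiction; the underlying mechanism is identical.
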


\begin{proof}[\bf Proof]
By Theorem \ref{coneL}  $v_1$ and $v_2$ are positive. We take the difference of the second lines of the formulations 
$$
\begin{cases}
 v_1\in H^1(0,L), \,\,\phi(v_1)\in L^2(0,L) \,,\\
\dys a (x) \frac{dv_1}{dx}   =  {\phi} (v_1) + h_1 \ \  \text{in} \ \ \mathcal{D}'(0,L),\\
\dys v_1(0)=0\,, & 
\end{cases}
$$
$$
\begin{cases}
 v_2\in H^1(0,L), \,\,\phi(v_2)\in L^2(0,L) \,,\\
\dys a (x) \frac{dv_2}{dx}   =  {\phi} (v_2) + h_2 \ \  \text{in} \ \ \mathcal{D}'(0,L),\\
\dys v_2(0)=0\,, & 
\end{cases}
$$
and we multiply this difference pointwise by $(v_1 - v_2)^+$. Since $(v_1 - v_2)^+\in H^1(0,L)$, using \eqref{monot} and \eqref{h1h2}, one has 
$$
\begin{cases}
 \dys \frac{a(x)}{2} \frac{d}{dx}   ((v_1 - v_2)^+)^2 = (\phi (v_1) - \phi(v_2))(v_1 - v_2)^+ + (h_1 - h_2)(v_1 - v_2)^+\leq 0,\\
 (v_1-v_2)^+(0)=0,
 \end{cases}
$$
which easily implies the comparison result.

\end{proof}

\subsection{Synthesis of  the results on ODE problem \eqref{formh}} 
\label{sub53}

To conclude this section, we synthesize the results that we have proved concerning the ODE problem  \eqref{formh}, namely
\begin{equation*}
\begin{cases}
 v\in H^1(0,L), \,\,\phi(v)\in L^2(0,L) \,,\\
\dys a (x) \frac{dv}{dx}   =  {\phi} (v) + h \ \  \text{in} \ \ \mathcal{D}'(0,L),\\
\dys v(0)=0\,. & 
\end{cases}
\end{equation*}

If we assume that the data satisfy hypotheses  \eqref{52n}--\eqref{58n}, then the  ODE problem \eqref{formh} has at least one solution (Theorem~\ref{shoot}). Moreover, all the possible solutions of \eqref{formh} satisfy an a priori estimate (Proposition~\ref{54}). These solutions enjoy a stability property (Proposition~\ref{reason}), namely the fact that from a sequence of solutions of problems   \eqref{formh} relative to a  sequence of reasonable approximations $\phi_k$ of $\phi$, one can extract a subsequence which converges to a solution of problem   \eqref{formh} relative to $\phi$, weakly in $H^1(0,L)$ if the sequence of source terms converges weakly in $L^2(0,L)$, and strongly in $H^1(0,L)$ if  the sequence of source terms converges strongly in $L^2(0,L)$.

If further to hypotheses \eqref{55n}--\eqref{58n}, we assume that the data satisfy hypotheses \eqref{533bis} ($\phi(0)=+\infty$) and \eqref{gf} ($h(x)\geq M_\eta$ on $(0,L-\eta)$ for every $\eta>0$), then every solution of the ODE problem \eqref{formh} is positive in $(0,L)$ (Proposition~\ref{coneL}).

If in addition to hypotheses \eqref{55n}--\eqref{58n} and \eqref{533bis}--\eqref{gf}, we assume hypothesis \eqref{monot} ($\phi$ monotone non-increasing for $s>0$), then the solutions of the ODE problem \eqref{formh} satisfy a comparison principle and the solution is unique (Proposition~\ref{compar}). In this latest case, the stability property described above become the continuity (in the weak and in the strong topologies) of the application which from the source term $h$ provides the (unique) solution of the ODE problem \eqref{formh}.

\section{An unexpected  multiplicity result}\label{Sec6}
\label{mult}

In this section  we  show that, under suitable assumptions on $\phi$, if there exists a solution of \eqref{pb1} in the sense of Definition \ref{defin} then there   are infinitely many solutions of \eqref{pb1} in the sense of Definition \ref{defin}. 
Here we will assume that $g$ satisfies \eqref{gf} with $h$ replaced by $g$, i.e. 
\begin{equation}\label{gf6}
\forall \ \eta\in (0,L),\ \ \exists \, M_\eta >0, \ \text{such that}\ \ g(x)\geq -M_\eta\ \ \forall\  x\in [0,L-\eta]. 
\end{equation}

Here is the main result of this section:
\begin{theorem}\label{une} \label{cara}
Assume that \eqref{cond1} holds true, and that  the data $(a,g,\phi)$ satisfy hypotheses \eqref{a1}--\eqref{p3},  \eqref{57n}-\eqref{58n},  \eqref{monot} and  \eqref{gf6}.

 \indent Assume  that there exists a solution $\ou$ of problem \eqref{pb1} in the sense of Definition \ref{defin}. Then there exist infinitely many solutions of \eqref{pb1} in the sense of Definition \ref{defin}.

More precisely, there  exists a critical value $c^*$ satisfying 
\begin{equation}\label{cstar}
c^* \leq \frac{1}{\sqrt{L}}\left(\frac{\beta}{\alpha}+1\right)\|g\|_{L^2(0,L)}-\inf_{s\in\re} \phi (s)\,,
\end{equation}
and a function $U$
\begin{equation}\label{U}
U: {c\in (-\infty,c^*] \longrightarrow U(c)\in \huzu}\, \ \ \text{weakly  continuous in $\huzu$}\,,
\end{equation}
which satisfies 
\begin{equation}\label{58}
 \text{$U(c)$ is a solution of problem \eqref{pb1} in the sense  of Definition \ref{defin}}\,,
\end{equation}
\begin{equation}\label{posi}
U(c)\geq 0\,,
\end{equation}
\begin{equation}\begin{cases}\label{511}
\begin{array}{l}
 \displaystyle\text{for any}\ \  c_1, c_2\in (-\infty, c^*]\ \text{such that $c_1<c_2$}, \\ \displaystyle\text{then  $U({c_1})(x)< U({c_2})(x)$}\ \ \text{for any}\ x\in (0,L)\,,
\end{array}\end{cases}\end{equation}

\begin{equation}\label{uniform}
 U(c) \to 0\ \ \text{ weakly   in $\huzu$ (and uniformly on}\ \  [0,L]) \ \text{when $c\to -\infty$}\,.
\end{equation}
\indent Moreover 
\begin{equation}\begin{cases} \label{anyu}
 \text{for any solution $u$ of problem \eqref{pb1} in the sense  of Definition  \ref{defin} for the data $(a,g,\phi)$, }
    \\ \text{there exists $c\in (-\infty, c^*]$ such that  $u=U(c)$}\,.
\end{cases}
\end{equation}
\end{theorem}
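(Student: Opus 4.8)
The plan is to realize every weak solution of \eqref{pb1} as a solution of the Cauchy problem \eqref{formh} studied in Section~\ref{Sec5}, letting the additive constant in the source play the role of the parameter $c$. By Proposition~\ref{remequiv}, a function $u$ is a weak solution of \eqref{pb1} if and only if $u\in\huzu$, $\phi(u)\in L^2(0,L)$ and $a(x)\frac{du}{dx}=\phi(u)+g+c$ with $c$ given by \eqref{oce}; dividing by $a$ and integrating shows conversely that whenever $v$ solves \eqref{formh} with datum $h=g+c$ and satisfies $v(L)=0$, the relation \eqref{oce} holds automatically, so such a $v$ lies in $\huzu$ and is a weak solution of \eqref{pb1}. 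Thus I would \emph{fix} $c\in\re$, let $U(c)$ be the solution of \eqref{formh} with $h=g+c$ --- which exists by Theorem~\ref{shoot} and is unique by Proposition~\ref{compar}, since $g+c$ inherits \eqref{gf} from \eqref{gf6} and $\phi$ satisfies \eqref{monot} --- and reduce the entire statement to understanding the set $\{c:\,U(c)(L)=0\}$.

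The decisive observation is an order-\emph{sandwiching} argument. Let $\ou$ be the given weak solution, with associated constant $\bar c$, so that $\ou=U(\bar c)$. For every $c\le \bar c$, the comparison principle (Proposition~\ref{compar}) applied to the data $g+c\le g+\bar c$ gives $U(c)\le \ou$, while positivity (Proposition~\ref{coneL}) gives $U(c)\ge 0$ on $(0,L)$; since $\ou(L)=0$ this forces $0\le U(c)(L)\le\ou(L)=0$, hence $U(c)(L)=0$ and $U(c)$ is a weak solution. This already produces infinitely many solutions. Setting $c^*=\sup\{c:\,U(c)(L)=0\}$, the monotonicity of $c\mapsto U(c)(L)$ (again from Proposition~\ref{compar}) shows this set is a half-line, while continuity of $c\mapsto U(c)$ (Proposition~\ref{reason}, applied with $g+c_k\to g+c$ strongly in $L^2(0,L)$) shows $c^*$ is attained, so $\{c:\,U(c)(L)=0\}=(-\infty,c^*]$ with $c^*\ge \bar c>-\infty$. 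Finiteness of $c^*$, together with the explicit bound \eqref{cstar}, follows by inserting $\phi(U(c))\ge \inf_{s}\phi(s)$ (finite by \eqref{58n} and \eqref{p3}) into \eqref{oce} and estimating $\int_0^L g/a$ by $\|g\|_{L^2(0,L)}$.

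It then remains to verify the listed properties of $U:(-\infty,c^*]\to\huzu$. Property \eqref{posi} is Proposition~\ref{coneL}, and the weak (indeed strong) continuity \eqref{U} is Proposition~\ref{reason}. For the strict ordering \eqref{511}, I would argue by contradiction: if $U(c_1)$ and $U(c_2)$ with $c_1<c_2$ touched at an interior point $x_*\in(0,L)$, then $U(c_i)(x_*)=p>0$ by positivity, so near $x_*$ the two functions solve a non-singular Carathéodory ODE; integrating $a\frac{d}{dx}\big(U(c_2)-U(c_1)\big)=\phi(U(c_2))-\phi(U(c_1))+(c_2-c_1)$ over $(x_*-h,x_*)$ and using that $\phi(U(c_2))-\phi(U(c_1))\to0$ as $x\to x_*$ while $(c_2-c_1)/a\ge (c_2-c_1)/\beta>0$ yields $U(c_2)(x_*-h)-U(c_1)(x_*-h)<0$ for small $h$, contradicting $U(c_1)\le U(c_2)$. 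For \eqref{uniform}, since each $U(c)$ with $c\le c^*$ is a weak solution of \eqref{pb1}, the energy estimate \eqref{propdubis} of Proposition~\ref{28bis} bounds $\|U(c)\|_{\huzu}$ \emph{uniformly in} $c$; the family is monotone and nonnegative, hence converges as $c\to-\infty$ to some $w\ge0$, and writing $\phi(U(c))+c=a\frac{dU(c)}{dx}-g$ shows the left-hand side is bounded in $L^2(0,L)$, which (using \eqref{58n} and $0\le U(c)\le\ou$) forces $U(c)\to0$ on a set of measure tending to $L$, whence $w\equiv0$ by the uniform $\huzu$ bound and Morrey's embedding. Finally \eqref{anyu}: any weak solution $u$ is, by positivity (Proposition~\ref{coneL}) and uniqueness (Proposition~\ref{compar}), equal to $U(c)$ for its own constant $c$, and $u(L)=0$ places $c$ in $(-\infty,c^*]$.

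The main obstacle is conceptual rather than computational: recognizing that the boundary condition $v(L)=0$, which singles out genuine solutions of \eqref{pb1} among solutions of the Cauchy problem \eqref{formh}, is \emph{automatically} satisfied on a whole half-line of parameters because of the order-sandwiching between $0$ and a single known solution $\ou$. Among the verifications, the limit \eqref{uniform} is the most delicate point, since the a priori estimate \eqref{stimah}--\eqref{520} for \eqref{formh} blows up as $c\to-\infty$, and one must instead exploit that each $U(c)$ is a bona fide weak solution of \eqref{pb1} to recover the $c$-independent $\huzu$ bound.
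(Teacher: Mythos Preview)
Your proposal is correct and follows essentially the same route as the paper: parametrize solutions of \eqref{pb1} by the constant $c$ via the Cauchy problem \eqref{formh} with $h=g+c$, use the sandwich $0\le U(c)\le U(\bar c)=\ou$ (positivity plus comparison) to force $U(c)(L)=0$ for every $c\le\bar c$, define $c^*$ as the supremum of admissible constants, and read off \eqref{posi}, \eqref{511}, \eqref{uniform}, \eqref{anyu} from the results of Section~\ref{Sec5}.

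The only noteworthy differences are in two subsidiary estimates. For the bound \eqref{cstar}, the paper works pointwise from $c=a\frac{du}{dx}-\phi(u)-g$ and then averages, whereas you insert $\phi(u)\ge\inf\phi$ directly into the averaged formula \eqref{oce}; your route in fact yields the slightly sharper constant $\frac{\beta}{\alpha\sqrt L}\|g\|_{L^2}-\inf\phi$, so \eqref{cstar} still follows. For \eqref{uniform}, the paper argues by contradiction via the local boundedness of $\phi(U(c_n))$ near a point where the putative limit is positive (in the spirit of Proposition~\ref{prop35}), while you give an equivalent global version: $\phi(U(c))+c$ is bounded in $L^2$ uniformly in $c$, and since $\phi$ is bounded on $[\delta,\|\ou\|_\infty]$ the measure of $\{U(c)\ge\delta\}$ must vanish as $c\to-\infty$. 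Both arguments are the same idea in different packaging.
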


\begin{proof}[Proof of Theorem \ref{une}]

 \bk 
We assumed that there exists a solution $\overline{u}$  of problem \eqref{pb1} in the sense  of Definition \ref{defin}.  Then by Proposition \ref{remequiv}, $\ou$ also solves 

\begin{equation}\label{ubarra}
\begin{cases}
\dys \ou\in \huzu, \,\,\,  \phi(\ou)\in L^2(0,L),  \\
\dys a (x) \frac{d \ou}{dx}   =  \phi (\ou) + g + \overline{c}\,\,\,\,  \text{in}\ \mathcal{D}'(0,L),
\end{cases}
\end{equation}
for 
\begin{equation}
\displaystyle \overline{c}= \dys -  \frac{\int_0^L \frac{\phi(\ou)}{a(x)} dx +\int_0^L\frac{g}{a(x)} dx}{\int_0^L\frac{1}{a(x)} dx}\,. 
\label{oc}\end{equation}

On the other hand, for every  $c\in\re$,  by Theorem \ref{shoot} with $h=g+c$ and Proposition \ref{compar} there exists a unique solution $v_c$ of 
\begin{equation}\label{formc}
\begin{cases}
v_c\in H^1(0,L),  \,\,\,    \phi(v_c)\in L^2(0,L), \\
\dys a (x) \frac{d v_c}{dx}   =  \phi (v_c) + g+c \,\,\,\, \text{in}\ \mathcal{D}'(0,L),\\
\dys v_c(0)=0. 
\end{cases}
\end{equation}
This allows us to  define a function $V$ by 
$$
V: {c\in \re \longrightarrow V(c)=v_c\in H^1(0,L)}\,.
$$
   \indent Since,  by Lemma \ref{coneL}, $V(c)=v_c>0$ in $(0,L)$,  one has 
   \begin{equation}\label{vcp}
   V(c)(L)\geq 0\,\,\,\, \forall\,\, c\in\re.  
   \end{equation}
   
 Observe that   $V(c)$  is a solution of problem \eqref{pb1} in the sense of Definition \ref{defin} if and only if $V(c)(L)=0$.

Moreover, as a consequence of  Proposition \ref{compar},  $\overline{u}=V({\overline{c}})$, and, in particular,  $V({\overline{c}})(L)=0$.

 For any $c<\overline{c}$ one has, again by   Proposition \ref{compar},  $V(c)\leq V({\overline{c}})$.  Therefore, by \eqref{vcp},  we have  $V(c)(L)=0$  and  $V(c)$ is a solution of  \eqref{pb1} in the sense of Definition \ref{defin}.

\bk

For any $c\in \re$ we consider the solution $V(c)$ of \eqref{formc} and  we define
$$
c^*=\sup\{c\in\re: V(c) \ \text{satisfies}\  V(c)(L)=0\}
$$
Observe that $c^*\geq \overline{c}$ where $\overline{c}$ is defined in \eqref{oc}.

On the other hand, if $c>c^*$ then $V(c) (L)>0$ and so, by uniqueness of solutions of \eqref{formc}, no solutions of \eqref{pb1} do exist.

We show that $c^*<+\infty$. In fact, if $u$ is a solution for \eqref{pb1} in the sense of Definition \ref{defin} then by \eqref{propdubis} one has 
\begin{equation}\label{alp}
\left\|\frac{d u}{dx}\right\|_{L^{2}(0,L)}\leq \frac{1}{\alpha}\|g\|_{L^{2}(0,L)}\,.
\end{equation}

Hence, by Proposition \ref{remequiv},    there exists $c\leq c^*$
$$
 c= a (x) \frac{d u}{dx}   -  \phi (u) - g \leq  \beta \left|\frac{d u}{dx}\right|   -\inf_{s\in\re} \phi (s)  + |g| \,.
$$
Therefore, using \eqref{alp} 
$$
c\leq \frac{1}{\sqrt{L}} \left(\frac{\beta}{\alpha}+1\right)\|g\|_{L^{2}(0,L)} -\inf_{s\in\re} \phi (s) \,,
$$
this, recalling Proposition \ref{remequiv} and the Definition of $c^*$,  implies that 
$$
c^*\leq \frac{1}{\sqrt{L}}\left(\frac{\beta}{\alpha}+1\right)\|g\|_{L^{2}(0,L)} -\inf_{s\in\re} \phi (s) <+\infty;
$$
that is   
 \eqref{cstar}.

Let us  show that $c^*$ is actually a maximum. Let $c_n\nearrow c^*$ so that  $V({c_n})\in H^1_0 (0,L)$. As ${c_n}$ are bounded, using   \eqref{stimah} one  gets
$$
\|V({c_n})\|_{H^1_{0}(0,L)}\leq C\,.
$$

Reasoning as before one can pass to the limit in 
$$
 a (x) \frac{d V({c_n})}{dx}   =  \phi (V({c_n})) + g+c_n\,\,\,\, \text{in}\ \mathcal{D}'(0,L)
$$
to obtain, using also  Proposition \ref{compar}, that 
$$
 a (x) \frac{d V({c^*})}{dx}   =  \phi (V({c^*})) + g+c^*\,\,\,\, \text{in}\ \mathcal{D}'(0,L). 
$$

The map $U=V_{\vert (-\infty,c^*]}$ is then well defined by $$U(c)=V(c) \ \text{for any}\ \  c\in (-\infty,c^*];$$ as there is no ambiguity in view of Proposition \ref{remequiv},  we will denote  by  $U(c)$  the function $V(c)$ once referring to the solution of \eqref{pb1} in the sense of Definition \ref{defin}. Hence \eqref{58} and  \eqref{posi} are  proven and the weak continuity of $U$ in $\huz$ is straightforward.  

\bigskip
 
Let us prove  \eqref{511}.  \newline By   Proposition \ref{compar}  one has that $U({c_1})\leq U({c_2})$. Assuming by contradiction that there exists  $x_0 \in (0,L)$ be such that $U({c_1}) (x_0) = U({c_2}) (x_0)$ then we define $$\sigma(x)=a(x)\frac{d (U({c_1})-U({c_2}))}{dx}(x)$$ so that 
$$
\sigma(x)=\phi(U({c_1})(x)) - \phi(U({c_2})(x)) +c_1 - c_2;
$$
in particular, since $\sigma$ is continuous on $(0,L)$ and  since 
\begin{equation}\label{c1c2}
\sigma(x_0)=c_1-c_2<0,  
\end{equation}

One has  $$\sigma(x) < \frac{c_1-c_2}{2} \ \ \forall x\,\,\text{in}\ \ (x_0 -\delta, x_0 +\delta)$$ for some $\delta>0$. This implies that 
\begin{equation} \label{c1c22}
\frac{d (U({c_1})-U({c_2}))}{dx}(x) \leq \frac{c_1-c_2}{2\beta}<0 \ \ \text{for a.e. }\ \ x\in (x_0 -\delta, x_0 +\delta), 
\end{equation}
a contradiction with the fact that $U({c_1})\leq U({c_2})$ as \eqref{c1c22}  gives that $U({c_1})-U({c_2})$ is strictly decreasing around $x_0$.  In fact, 
let $x\in (x_0 -\delta, x_0)$ and denote by $w=U({c_1}) - U({c_2})$; we have $w(x)\leq 0$ and $w(x_0)=0$. On the other hand  
$$
w(x)=\int_{x_0}^{x}\frac{d w}{dx}(s)\ ds =-\int_{x}^{x_0}\frac{d w}{dx}(s)\ ds \stackrel{\eqref{c1c22}}{>}0\,.
$$

\bigskip 

Let us formally prove  \eqref{uniform}.  Let $c_n<c^*$ such that $c_n\searrow- \infty$. As before 
$$
\left\|U({c_n})\right\|_{H^1_0(0,L)}\leq \frac{1}{\alpha}\|g\|_{L^{2}(0,L)}\,.
$$
so that, up to subsequences, $U({c_n})\rightharpoonup u$ in $H^1_0(0,L) $ and uniformly (by compact embedding).  We can apply Theorem \ref{maintheo}: either $u\equiv 0$ or assume by contradiction that  there exists $x_0\in (0,L)$ such that 
$$
u(x_0)>0,
$$
and so,   in  a small neighborhood of $x_0$ of the form $(x_0 -\delta, x_0 +\delta)$, one has 
$$
c_n=a(x) \frac{d U({c_n})}{dx}-\phi(U({c_n})) -g\,. 
$$
i.e., using Proposition \ref{prop35} one has that  $|c_n|$ is bounded, that is a contradiction. 

\bigskip 

In order to conclude the proof of Theorem \ref{cara} we are left with the proof of \eqref{anyu} which easily follows by Proposition \ref{compar}: in fact, if $u$ is a solution of problem \eqref{pb1} in the sense of Definition \ref{defin},  then $u=U(c)$ by uniqueness of the  solution of problem \eqref{formh}. 

\end{proof}

\section{Existence of solutions in the sense of Definition \ref{defin}}
\label{secexistence}\label{Sec8}

Up to now we explored the consequences of Theorem \ref{maintheo} (Alternative) by assuming the existence of a solution of problem \eqref{pb1} in the sense of  Definition \ref{defin}, but we never proved the existence of such a solution.  

In this section we fix $a$ and $\phi$ satisfying \eqref{a1} and \eqref{condphi1} as well as \eqref{p+} and \eqref{p-} (to be compared with  Theorem~\ref{45}),  and we consider $g$ satisfying \eqref{g1} as a parameter.  We aim at constructing two  large sets of  $g$'s for which a solution in the sense of Definition~\ref{defin} does exist.

\subsection{A first remark about the set of solutions of problem \eqref{pb1} in the sense of Definition~\ref{defin}}

Let us begin with the following remark, which is very simple, but essential.

\begin{remark}

Define 
  the set $\mathcal{G}$ of "good data`` by 
 \begin{equation}\label{cappg}
 \mathcal{G}=\{g\in L^{2}(0,L): \text{$\exists$ at least one solution $u$ of  \eqref{pb1} in the sense of Definition \ref{defin}} \},
 \end{equation}
and, as in \eqref{setU},  the set 
\begin{equation}
\label{81bis}
\mathcal{U}=\{\hat{u}\in H^1_0 (0,L) \ \text{such that}\ \  \phi(\hat{u})\in L^2(0,L)\}\,. 
\end{equation}

It is clear that every $u$ which is a solution of problem \eqref{pb1} in the sense of Definition~\ref{defin} is an element of $\mathcal{U}$.

On  the other side, for every $u\in\mathcal{U}$,   taking  any $c\in \re$, and setting 
 \begin{equation}\label{ggs}
g= a(x)\frac{du}{dx} -\phi(u)-c\,,
 \end{equation}
it is clear that $u$ is a solution of problem \eqref{pb1} in the sense of  Definition \ref{defin} for the source term $g$. Actually, every $g\in \mathcal{G}$ is of the form \eqref{81bis} for some $u\in  \mathcal{U}$ and $c\in\mathbb{R}$.

This establishes a very strong relation between the two sets $ \mathcal{G}$ and $ \mathcal{U}$. In particular  $\mathcal{G}$ is non-empty if and only if $ \mathcal{U}$ is non-empty.

Therefore the study of the set $ \mathcal{U}$ is essential when studying the existence of solutions of problem \eqref{pb1} in the sense of Definition~\ref{defin}

 \qed
\end{remark}

\begin{remark}[Model example]
\label{rem80}
In the model case where the nonlinearity $\phi$ is given by  
$$
\phi(s)=\frac{c}{|s|^{\gamma}}\,\, \mbox{ with } \,\,c>0 \,\,\mbox{ and }\,\, 0<\gamma<1,
$$
it is easy to see that $u$ defined by 
\begin{equation}
\label{83}
u(x)=Kx^{\lambda}(L-x)^{\lambda} \,\,\mbox{ with }\,\, \frac 12<\lambda<\frac{1}{2\gamma},\,\,\forall K\in\mathbb{R},\,\, K\not=0,
\end{equation}
belongs to $\mathcal{U}$ (since then $\dys\frac{du}{dx}\in L^2(0,L)$ and $\dys\frac{1}{|u|^\gamma}\in L^2(0,L)$ because we have $0<\gamma<1$). Therefore $u$ is a solution of problem \eqref{pb1} in the sense of Definition~\ref{defin} for the source term
\begin{equation}
\begin{split}
\label{83bis}
g(x)&=a(x)\frac{du}{dx}-\phi(u)-c= \\&=a(x)\left(K\lambda x^{\lambda-1}(L-x)^{\lambda} - K\lambda x^{\lambda}(L-x)^{\lambda-1}\right)- \frac{c}{|K|^{\lambda} x^{\lambda \gamma}(L-x)^{\lambda\gamma}} -c,\,\,\\\nonumber \forall c\in\mathbb{R}.
\end{split}
\end{equation}

This is a first example of a solution of problem \eqref{pb1} in the sense of Definition~\ref{defin}, which will be a model for the whole of the present section.

 \qed
\end{remark}

\subsection{A first large class of good data}

 Starting from the idea of the example presented in  Remark~\ref{rem80}  we will show that it is always possible to construct explicit local  solutions $w^r$   of problem \eqref{pb1} emerging from a point $\overline{x}\in [0,L)$ towards the right side, 
 and $w^\ell$ coming backward  from a point $\overline{y}\in (0,L]$ to the left side provided the datum $g$ is chosen accordingly.

 For the sake of exposition we start by showing how  these solutions can be constructed   in the model case   \begin{equation}\label{modelphi}
\phi(s)=\frac{c}{|s|^{\gamma}}\,\, \mbox{ with } \,\,c>0 \,\, \text{ and }\ \  0<\gamma<1
.\end{equation}
\bk 
Define for $y\in [0,L)$ and for some $K^r\in \mathbb{R}$, $K^r\not=0$, $\lambda^r>0$ and $\delta>0$, the function $w^r$ by 
$$
w^r(x)=K^r(x-y)^{\lambda^{r}}\ \ \text{for}\  y\leq x\leq y+\delta\,.
$$
 Since
$$
\frac{dw^{r}}{dx} = K^r \lambda^{r}(x-y)^{\lambda^{r} -1}\ \ \text{and}\ \ \phi(w^{r})= \frac{c}{(|K^{r}| (x-y)^{\lambda^{r}} )^\gamma } \ \ \text{for}\  y\leq x\leq y+\delta,
$$
we have $\displaystyle\frac{dw^{r}}{dx}$ and  $\phi(w^{r})$ in $L^2(y,y+ \delta)$ if and only if 
\begin{equation}\label{lamb}
\frac12 <\lambda^r<\frac{1}{2\gamma},
\end{equation}
this choice is possible since $0<\gamma<1$.

Reasoning in the same way, define for $y\in(0,L]$ and for some $K^\ell\in \mathbb{R}$, $K^\ell\not=0$, $\lambda^\ell>0$ and $\delta>0$, the function $w^\ell$ by

$$
w^{\ell}(x)=K^{\ell}(y-x)^{\lambda^{\ell}}\ \ \text{for}\  y-\delta\leq x<y\,,
$$
for which we have $\displaystyle\frac{dw^{\ell}}{dx}$ and  $\phi(w^{\ell})$ in $L^2(y-\delta,y)$ if and only if 
$$\frac12< {\lambda^{\ell}}<\frac{1}{2\gamma}\,.$$

  Now,  we show how given any $x_1$ and $x_2$ with $0\leq x_1<x_2\leq L$ and $\delta>0$ with $x_1+\delta<x_2-\delta$, one is able to construct a solution of problem \eqref{pb1} in the sense of Definition \ref{defin} in any interval of the form $[x_1,x_2]$ and not only on $[0,L]$. In fact,  take any  function $w^{int}(x)\in H^1(x_1 +\delta,x_2-\delta) $ such  that $w^{int}(x_1 +\delta)=w_1(x_1 +\delta)$ and $w^{int}(x_2-\delta)=w_{2}(x_2-\delta)$. We also request that, for some $\eta>0$, $w^{int}(x)\geq \eta$ in $x\in(x_1 +\delta,x_2-\delta)$. \bk Now we define
\begin{equation}\label{glue}
w(x) =
\begin{cases}
 w_1^r & x\in[x_1,x_1+\delta),\\
w^{int}& x\in [x_1 +\delta,x_2-\delta],\\
  w_2^l & x\in (x_2-\delta, x_2].
\end{cases}
\end{equation}
Then, if we set 
$$
g=a(x)\frac{ dw}{dx}-\frac{c}{|w|^\gamma}\,, \ x\in (x_1,x_2),  
$$
it is easy  to check that $g\in L^2(x_1,x_2)$, 
 and that $w$ is a solution of
$$
\begin{cases}
\dys w\in H^1_0 (x_1,x_2), \,\,\,\, \frac{c}{|w|^\gamma}\in L^2(x_1,x_2), \\
\dys -\frac{ d}{dx}\left(a(x)\frac{ dw}{dx}\right)  = - \frac{ d}{dx}\left(\frac{c}{|w|^\gamma}\right)
 - \frac{ d g}{dx}  \,\,\,\, \text{in}\,\,[x_1,x_2]\,.
\end{cases}
$$

 By modifying the value of $\lambda$, with   $\frac{1}{2}<\lambda<\frac{1}{2\gamma}$,  the value and the sign   of $K$,   and reasoning around a finite number of points $x_0=0<x_1<...<x_{n} <x_{n+1}=L$,   we can construct a bunch  of  functions which  behave as ${w}$ between  $x_i$ and $x_{i+1}$,   and so a large class of data for which there  exists a solution of problem \eqref{pb1} in the sense of Definition \ref{defin}. 
 
  \begin{figure}[htbp]\centering
\includegraphics[width=5in]{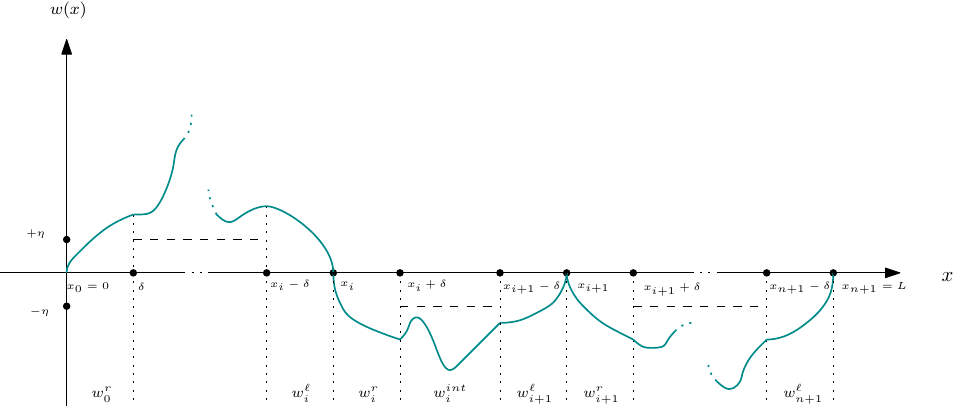}
\caption{Building up the function $w(x)$}
\end{figure}
 
  More precisely let $x_0=0<x_1<... <x_{n} <x_{n+1}=L$ (i.e. $x_1, ..., x_n$ are the internal points) and let  $\delta>0$ such that 
  $$
  x_i+\delta< x_{i+1} -\delta\,, \ \ \text{for} \ 0\leq i\leq n\,. 
  $$
Moreover  for the internal points $x_i$,  $i=1, ..., n$,  let us consider $\lambda_i^{r}$, $\lambda_i^{\ell}$ satisfying \eqref{lamb}, and let  $K_i^{r}$, $K_i^{\ell}$ in $\re\setminus\{0\}$; also consider $\lambda_0^{r}$ and $\lambda_{n+1}^{\ell}$ in $\re$ satisfying \eqref{lamb},  and  $K_{0}^{r}$ and $K_{n+1}^{\ell}$ in $\re\setminus\{0\}$ for the extremal points $0$ and $L$.  We assume 
 $$
 K_i^{r} K_{i+1}^{\ell}>0,\ \  \text{for}\ i=0, ..., n.
 $$
 
Now, around each internal point $x_i$, $i=1, ..., n$, we set

 \begin{equation}
 \begin{split}
\label{wominus} w_i^{\ell}(x)&= K_i^{\ell} (x_i-x)^{\lambda^{\ell}_i} \ \text{for}\ x_i-\delta<x <x_i\,,\\
 w_i^{r}(x)&= K_i^{r} (x-x_i)^{\lambda^{r}_i} \ \text{for}\ x_i<x <x_i +\delta,
 \end{split}
 \end{equation}
and for the extremities (i.e. $i=0$ and $i=n+1$)
 \begin{equation}
 \begin{split}
 \label{wominus2}
w_0^{r}(x)&= K_0^{r} x^{\lambda^{r}_0} \ \text{for}\ 0<x <\delta, \\
w_{n+1}^{\ell}(x)&= K_{n+1}^{\ell} (L-x)^{\lambda^{\ell}_{n+1}} \ \text{for}\ L-\delta<x <L\,.
 \end{split}
 \end{equation}
 In the remaining intervals,  which are  of the form $(x_i+\delta,x_{i+1}-\delta)$, $i=0, ..., n$, we   define   $w_{i}^{int}(x)$ as any  function in $H^1(x_i+\delta,x_{i+1}-\delta)$ which is continuously joined  with the functions $w_i^{r}$ and $w_{i+1}^{\ell}$ defined in \eqref{wominus} and \eqref{wominus2}, i.e. 
 $$
 w_i^{int} (x_i+\delta)=  w_i^{r}(x_i+\delta) \ \ \text{and} \ \ w_{i}^{int} (x_{i+1}-\delta)=  w_{i+1}^{\ell}(x_{i+1}-\delta)\ \ \forall \ i=0,..., n\,, 
 $$
and which satisfies,  for some $\eta>0$
 $$
 |w_{i}^{int}(x)|\geq \eta>0, \ \ \text{for any}\ \ x\in (x_i+\delta,x_{i+1}-\delta).
 $$

Summarizing  we have defined a function  $w\in H^1_0 (0,L)$ that in any interval $(x_i,x_{i +1})$, $i=0, ..., n$,  is given by
 \begin{equation}\label{glue3}
w(x) =
\begin{cases}
 K_{i}^r (x-x_i)^{\lambda_{i}^r},  & x\in (x_i,x_i+\delta],\\
  w_{i}^{int} (x),  & x\in (x_i+\delta,x_{i+1}-\delta),\\
K_{i+1}^{\ell} (x_{i+1}-x)^{\lambda_{i+1}^{\ell}},& x\in [x_{i+1}-\delta,x_{i+1}).
\end{cases}
\end{equation}
 
 Finally we define the function $g$ by
\begin{equation}\label{deduce}\dys g=\frac{ dw}{dx}-\phi(w)\ \text{in}\ \ \mathcal{D}'(0,L),\end{equation}
 and we observe that   $w\in H^1_0 (0,L)$ is a  solution of problem \eqref{pb1} in the sense of Definition \ref{defin} with $g$ as datum.

One then has the following result.   
 \begin{proposition}\label{bunch}
 Assume \eqref{cond1}--\eqref{a1} and \eqref{pM}. For $n\in \mathbb{N}$,  fix $n$ points $x_1, ..., x_n$ such that  $0=x_0<x_1<...x_{n} <x_{n+1}=L$. Also consider  for the internal points $x_i$,  $i=1, ..., n$,  let us consider $\lambda_i^{r}$, $\lambda_i^{\ell}$ satisfying \eqref{lamb}, and let  $K_i^{r}$, $K_i^{\ell}$ in $\re\setminus\{0\}$; also consider $\lambda_0^{r}$ and $\lambda_{n+1}^{\ell}$ in $\re$ satisfying \eqref{lamb},  and  $K_{0}^{r}$ and $K_{n+1}^{\ell}$ in $\re\setminus\{0\}$ for the extremal points $0$ and $L$.  We assume 
 $$
 K_i^{r} K_{i+1}^{\ell}>0,\ \  \text{for}\ i=0, ..., n.
 $$
 Then there exists $w\in H^1_0 (0,L)$ solution of \eqref{pb1} in the sense of Definition \ref{defin} with datum $\dys g=\frac{ dw}{dx}-\phi(w)$ and such that $w(x_i)=0$ for $i=1, ..., n$. 
 \end{proposition}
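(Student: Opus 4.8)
The plan is to verify that the explicit function $w$ constructed in the paragraphs immediately preceding the statement does everything that is claimed; the proposition is essentially a clean packaging of that construction, so the real work consists in checking that the glued pieces fit together in $H^1_0(0,L)$ and that both $\dys\frac{dw}{dx}$ and $\phi(w)$ land in $L^2(0,L)$. I would organize the argument into four short steps, dealing first with the local behaviour at the zeros, then with the gluing, then with the global regularity, and finally with the identification of the equation.

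First I would record the local behaviour of the two power-type building blocks. On an interval adjacent to a point $x_i$ where $w$ vanishes, $w$ equals $K(x-x_i)^{\lambda}$ (or $K(x_i-x)^{\lambda}$), so that $\dys\frac{dw}{dx}$ behaves like $(x-x_i)^{\lambda-1}$ and $\phi(w)=c|K|^{-\gamma}(x-x_i)^{-\lambda\gamma}$, plus the term $\varphi(w)$ of the general model \eqref{pM}, which is bounded since $w\in L^\infty(0,L)$. A direct integrability computation shows $\dys\frac{dw}{dx}\in L^2$ near $x_i$ precisely when $\lambda>\frac12$, and $\phi(w)\in L^2$ near $x_i$ precisely when $\lambda\gamma<\frac12$. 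Both requirements are exactly the constraint \eqref{lamb}, whose admissible range is non-empty because $0<\gamma<1$; this is where the hypothesis $\gamma<1$ enters in an essential way, and it is the only genuinely delicate point of the whole proof.

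Next I would glue the pieces. On each interior block $(x_i+\delta,x_{i+1}-\delta)$ the interior piece $w_i^{int}$ is chosen in $H^1$ and bounded below in absolute value by some $\eta>0$, so that there $\phi(w)$ is bounded and hence in $L^2$, while $\dys\frac{dw}{dx}\in L^2$ trivially. Continuity of $w$ at the junctions $x_i+\delta$ and $x_{i+1}-\delta$ is guaranteed by the matching conditions imposed on $w_i^{int}$, and continuity at each zero $x_i$ is automatic since both the left and right power pieces tend to $0$ there; the sign condition $K_i^rK_{i+1}^{\ell}>0$ guarantees that on $(x_i,x_{i+1})$ the pieces keep a constant sign, so that an interior piece of that sign can indeed be inserted away from $0$. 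Since a piecewise-$H^1$ function which is continuous across all junctions is globally $H^1$, and since the extremal pieces give $w(0)=w(L)=0$, I would conclude $w\in H^1_0(0,L)$; assembling the local estimates of the first step then yields $\phi(w)\in L^2(0,L)$, and by construction $w(x_i)=0$ for $i=1,\dots,n$.

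Finally I would identify the equation. Setting $g=a(x)\dys\frac{dw}{dx}-\phi(w)$ as in \eqref{ggs} (with additive constant $0$, consistently with \eqref{deduce}), the memberships $\dys\frac{dw}{dx}\in L^2$, $\phi(w)\in L^2$ together with $a\in L^\infty$ give $g\in L^2(0,L)$, and by its very definition $a(x)\dys\frac{dw}{dx}=\phi(w)+g$ holds in $\mathcal{D}'(0,L)$, i.e. \eqref{form-def3} is satisfied. Proposition~\ref{remequiv} then converts this into the assertion that $w$ is a weak solution of problem \eqref{pb1} in the sense of Definition~\ref{defin}, which completes the argument. Everything beyond the exponent-threshold check of the first step is routine bookkeeping about continuity and gluing.
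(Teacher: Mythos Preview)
Your proposal is correct and follows exactly the paper's approach: the proposition is a summary of the explicit construction given in the preceding paragraphs, and your four steps (local $L^2$ checks via \eqref{lamb}, gluing with continuity, global $H^1_0$ membership, identification of the equation through \eqref{form-def3} and Proposition~\ref{remequiv}) match that construction point by point. You also correctly restore the factor $a(x)$ in the definition of $g$ (as in \eqref{ggs} and the earlier display with \eqref{glue}), which appears to be a typo in \eqref{deduce} and in the proposition statement.
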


\begin{remark}
Observe that, by the arguments in the proof of Theorem \ref{36},   data  $g$ belonging to $\mathcal{G}$ are necessarily unbounded around each zero's of the functions $w$ through which it is defined; in fact, near each zero's 
$$
\frac{dw}{dx} \sim |x-x_i|^{\lambda_i -1}\ \ \text{and}\ \ \phi({w})\sim  |x-x_i|^{-\lambda_i\gamma},
$$
and $g$ is defined to compensate them.  The threshold value $\lambda_i=\frac{1}{1+\gamma}$  is the one in which the two terms are in balance. 

Also observe that in this subsection for simplicity we choose to present the construction for the model case of $\phi$ given by \eqref{modelphi}. As a matter of fact  this construction can be done for a general $\phi$ satisfying 
\eqref{condphi1}--\eqref{p3},  \eqref{57n} and \eqref{58n} using the idea presented  in Theorem \ref{t85} of Subsection \ref{s83}. 

 \qed 
\end{remark}

\subsection{Obtaining solutions for any  datum  $g$ by modifying it on $[L-\delta,L]$} \label{s83}

In the previous subsection  we constructed a bunch of  solutions of  problem \eqref{pb1} in the sense of Definition \ref{defin}. From this construction we obtained by \eqref{deduce}  a large class of data $g$ for which there exist a solution of  problem \eqref{pb1} in the sense of Definition \ref{defin}.

In this subsection, we change   viewpoint and  we   construct, for any fixed $g\in L^{2}(0,L)$ and for any $\delta$ with  $0<\delta<L$,  a datum $\hat{g}$ which coincides with $g$ on $[0,L-\delta]$  for which problem \eqref{pb1} admits a  solution in the sense of Definition \ref{defin}. This construction uses  an   idea similar to the one used in the previous subsection, but now exploits  a shooting argument for backward  solutions starting from $x=L$  on $[L-\delta,L]$.  Moreover, this argument  is presented here   in the case of a general    $\phi$ satisfying  
\eqref{condphi1}--\eqref{p3},  \eqref{57n} and \eqref{58n}, and not only in the model case of $\phi$ given by   \eqref{modelphi}. 

\begin{theorem}\label{t85} Assume that \eqref{cond1} holds true, and that  the data $(a,\phi)$ satisfy hypotheses \eqref{a1},  \eqref{condphi1}--\eqref{p3},   \eqref{57n}-\eqref{58n},  and \eqref{533bis}--\eqref{gf}.  
Then for any $g\in L^2(0,L)$ and $\delta$ with $0<\delta<L$, there exist  $\hat{g}$ in $\mathcal{G}$ such that $g=\hat{g}$  in $[0,{L-\delta}]$. 
\end{theorem}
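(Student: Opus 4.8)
The plan is to split $[0,L]$ at the point $L-\delta$, to keep the datum equal to $g$ on $[0,L-\delta]$ (where the solution will coincide with a forward solution of the associated ODE \eqref{formh}), and to build by hand an admissible profile on $[L-\delta,L]$ bringing the solution back to $0$ at $x=L$; the modified datum $\hat g$ is then \emph{defined} on $[L-\delta,L]$ by the equation itself. Concretely, I would look for $U\in\huzu$ with $\phi(U)\in L^2(0,L)$ which on $[0,L-\delta]$ solves \eqref{formh} for $h=g$, and then set
\begin{equation*}
\hat g := a(x)\frac{dU}{dx}-\phi(U)\quad\text{in }(0,L).
\end{equation*}
By construction $\hat g=g$ on $[0,L-\delta]$, and as soon as $U\in\huzu$, $\phi(U)\in L^2(0,L)$ and $\hat g\in L^2(0,L)$, the function $U$ solves \eqref{form-def3} with $c=0$, so by Proposition~\ref{remequiv} it is a weak solution in the sense of Definition~\ref{defin} and $\hat g\in\mathcal G$ (see \eqref{cappg}). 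Everything thus reduces to constructing $U$.

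For the left piece I would apply the existence Theorem~\ref{shoot}, which requires only \eqref{52n}--\eqref{58n} and in particular \emph{no} lower bound on the datum, on the interval $(0,L-\delta)$ with $h=g$: the same proof (after an obvious rescaling) gives $v\in H^1(0,L-\delta)$ with $\phi(v)\in L^2(0,L-\delta)$, $v(0)=0$ and $a v'=\phi(v)+g$. I set $A:=v(L-\delta)$. Here I deliberately use only existence, neither positivity nor uniqueness, so that the arbitrariness (and possible unboundedness from below) of $g$ on $[0,L-\delta]$ causes no difficulty whatsoever.

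The delicate point is the right piece: I must produce $u\in H^1(L-\delta,L)$ with $\phi(u)\in L^2(L-\delta,L)$, $u(L-\delta)=A$ and $u(L)=0$. The only genuine constraint is that $\phi(u)\in L^2$ near every zero of $u$, in particular near $x=L$; since \eqref{57n} only guarantees $\phi\in L^1$ (not $L^2$) near $0$, a crude linear matching need not work for a general $\phi$. The key device is the canonical profile $\theta$ solving $\theta'=\phi(\theta)$, $\theta(0)=0$, furnished by Theorem~\ref{shoot} with $a\equiv1$, $h\equiv0$ (all hypotheses, including \eqref{gf} and \eqref{533bis}, being then trivially met): for such $\theta$ one has $\phi(\theta)=\theta'\in L^2$ automatically, and $\theta>0$ away from $0$ by Proposition~\ref{coneL}. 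Reflecting, translating and rescaling $\theta$ yields, for either prescribed sign, an $H^1$ profile that vanishes at an endpoint with $\phi$ of its argument in $L^2$. I would then assemble $u$ by joining, on a small interval $[L-\delta_0,L]$, such a profile vanishing at $L$ (of the sign of $A$, or a full bump vanishing at both $L-\delta$ and $L$ when $A=0$) to a connecting $H^1$ function on $[L-\delta,L-\delta_0]$ taking values bounded away from $0$ (where $\phi(u)$ is bounded by \eqref{58n}); this is exactly the local backward construction announced before Proposition~\ref{bunch}. Finally $\hat g:=a u'-\phi(u)$ on $[L-\delta,L]$ lies in $L^2$ because $u\in H^1$ and $\phi(u)\in L^2$.

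It remains to glue: $U:=v$ on $[0,L-\delta]$ and $U:=u$ on $[L-\delta,L]$. The matching $v(L-\delta)=A=u(L-\delta)$ gives $U\in H^1(0,L)$, while $U(0)=U(L)=0$ gives $U\in\huzu$, and $\phi(U)\in L^2(0,L)$ on each piece; hence $U$ is a weak solution for the datum $\hat g$, which coincides with $g$ on $[0,L-\delta]$, so $\hat g\in\mathcal G$. \textbf{The main obstacle} is precisely this right piece, namely arranging $\phi(u)\in L^2$ at the forced zero $x=L$ (and at any intermediate zero produced when $A>0$) for a \emph{general} $\phi$ satisfying only \eqref{57n}; this is what forces the use of ODE-type profiles, for which $\phi(u)=a u'-\hat g$ belongs to $L^2$ by construction, rather than the explicit polynomial matchings available only in the model case \eqref{modelphi}.
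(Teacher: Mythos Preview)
Your plan is essentially the paper's own proof: solve the forward ODE on $[0,L-\delta]$ with datum $g$, build an admissible right piece vanishing at $L$ via an ODE profile so that $\phi$ of it is automatically in $L^2$, and define $\hat g$ by the equation. The paper streamlines the right piece slightly: instead of using $\theta'=\phi(\theta)$ and then a separate $H^1$ connector bounded away from~$0$, it introduces $\phi^{\oplus}(s)=\phi(s)-\inf\phi+1\geq 1$, solves $(w^{\oplus})'=K\,\phi^{\oplus}(w^{\oplus})$, and chooses the free parameter $K$ (via the bijection $\zeta(s)=\int_0^s 1/\phi^{\oplus}$) so that the profile hits $v(L-\delta)$ exactly, making the connector unnecessary.

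One point in your sketch is underspecified: for the \emph{negative} case $A<0$, ``reflecting, translating and rescaling $\theta$'' does not yield a negative profile with $\phi(u)\in L^2$, because $\phi(-\theta)$ need not be controlled by $\phi(\theta)$ for a general $\phi$. You must instead apply Theorem~\ref{shoot} to the companion nonlinearity $s\mapsto\phi(-s)$ (which also satisfies \eqref{55n}--\eqref{58n} and \eqref{533bis}) to get $\tilde\theta>0$ with $\tilde\theta'=\phi(-\tilde\theta)$, and then take $u=-\tilde\theta(L-\cdot)$; this is exactly the paper's $\phi^{\ominus}$ trick. With that fix your argument is complete.
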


\begin{proof}[\bf Proof]
By Theorem \ref{shoot}, there exists $v$ such that 
$$
\begin{cases}
v\in H^1 (0,L),\,\,\,\, \phi(v)\in L^2(0, L),  \\
\dys a(x)\frac{dv}{dx}  =  \phi (v) + g  \,\, \text{in}\,\,(0,L),\\
v(0)=0\,. & 
\end{cases}
$$

By possibly choosing a smaller $\delta$ we can assume that $v(L-\delta)\neq 0$. Our aim is to define the solution $\hat{u}$ of problem \eqref{pb1} in the sense of Definition \ref{defin} relative to a suitably chosen $\hat{g} $
$$
\hat{u}(x)=\begin{cases}
 v(x)& \text{in}\ [0,L-\delta),\\
 w(x)&  \text{in}\ [L-\delta,L]\,,\\ 
\end{cases}
$$
where  $w(x)$ satisfies 
\begin{equation}\label{vdelta}
\begin{cases}
w\in H^1 (L-\delta,L),\,\,\,\, \phi(w)\in L^2(L-\delta, L),  \\
w(L-\delta)=v(L-\delta)\,,\\
w(L)=0\,.
\end{cases}
\end{equation}
In fact, it is not difficult to check that, if such a $w$ exists then $\hat{u}$ is a solution of problem \eqref{pb1} in the sense of Definition \ref{defin} for the datum
$$
\hat{g}(x)=\begin{cases}
 g(x)& \text{in}\ [0,L-\delta),\\
 a(x)\displaystyle\frac{dw}{dx} -\phi(w)&  \text{in}\ [L-\delta,L]\,.\\ 
\end{cases}
$$

Therefore, to conclude it suffices to  construct a function $w$ satisfying \eqref{vdelta}.

In order to simplify the argument we observe that, by mean of the change of variable $y=L-x$, the construction of  a function $w$ satisfying \eqref{vdelta} is equivalent to the construction of a function $\overline{w}$ satisfying 
\begin{equation}\label{ovdelta}
\begin{cases}
\overline{w}\in H^1 (0,\delta),\,\,\,\, \phi(\overline{w})\in L^2(0,\delta),  \\
\overline{w}(\delta)=v(L-\delta)\,,\\
\overline{w}(0)=0\,.
\end{cases}
\end{equation}

Assume first that $v(L-\delta)>0$. We define the following function
\begin{equation}\label{p>1}
\phi^{\oplus}(s)=\phi(s) - \inf_{s\in\re} \phi(s)+1\,;
\end{equation}
it is easy to check that $\phi^{\oplus}$ also satisfies \eqref{condphi1}--\eqref{p3},  \eqref{57n} and \eqref{58n}, moreover \begin{equation}\label{phip}\phi^{\oplus}(s)\geq 1\ \ \text{for any}\ s\in\re.\end{equation}

Now, for a fixed $K>0$ to be chosen later, one has, by Theorem \ref{shoot} (observe that since $K>0$ then $K \phi^{\oplus} (0)=+\infty$), that there exists at least a  solution $w^{\oplus}$ to  
$$
\begin{cases}
w^{\oplus}\in H^1 (0,\delta),\,\,\,\, \phi^{\oplus}(w^{\oplus})\in L^2(0, \delta),  \\
\dys \frac{dw^{\oplus}}{dx}  = K \phi^{\oplus} (w^{\oplus})  \,\, \text{in}\,\, (0,\delta),\\
w^{\oplus}(0)=0,\,
\end{cases}
$$
and by and Proposition \ref{coneL} $w^{\oplus}>0$ on $(0,\delta)$.

First observe that  $\phi(w^{\oplus})\in L^{2}(0,\delta)$ if and only if $\phi^{\oplus}(w^{\oplus})\in L^{2}(0,\delta)$. 

Now,  in view of \eqref{phip}, we  have 
\begin{equation}\label{divid}
\frac{1}{\phi^{\oplus} (w^{\oplus})}\frac{dw^{\oplus}}{dx}  = K \,,\ \ \text{for any}\ x\in (0,\delta). 
\end{equation}
Now observe that 
\begin{equation}\label{1phip}0\leq \frac{1}{\phi^{\oplus}(s)}\leq  1\ \ \text{for any}\ s\in\re. \end{equation}

Hence,   we define
$$
\zeta (s)=\int_0^s{\frac{1}{\phi^{\oplus} (r)}}dr,  
$$
 and, using \eqref{phip},  $\zeta$ is strictly increasing on $\re^+$, by \eqref{p3} one has $\zeta (0)=0$, 
 and, recalling \eqref{58n},   for any $\eta>0$ there exists $c_\eta>0$ such that 
$$
\frac{1}{\phi^{\oplus} (r)}\geq \frac{1}{c_\eta}>0, \ \ \text{for all $r$ in}\  [\eta,+\infty),
$$
 so that we deduce that 
 $$
 \lim_{s\to +\infty} \zeta (s)=+\infty, 
 $$
yielding in particular that  $\zeta: \re^+\to\re^+$  is a bijection.  
 
Ultimately, by \eqref{divid} one has 
$$
\begin{cases}
\dys \frac{d}{dx} \zeta (w^{\oplus}) = K \  & \text{in}\;(0,\delta),\\
\zeta(0)=0\,, & 
\end{cases}
$$
 i.e., 
 $$
 \zeta (w^{\oplus}(x))=Kx\, \ \ \ \text{in}\;(0,\delta), 
 $$
 that is, choosing 
 $$
 K=\frac{\zeta({v(L-\delta)})}{\delta}, 
 $$
 one easily check that $w^\oplus$ satisfies \eqref{ovdelta} and we conclude.

In the case where   $v(L-\delta)<0$,   we define 
$$
\phi^{\ominus}(s)= \phi^{\oplus}(-s),
$$
 and  we reason as before and we may pick a positive $K>0$ and  a solution $\underline{w}$  of 
 $$
\begin{cases}
\underline{w}\in H^1 (0,\delta),\,\,\,\, \phi^{\ominus}(\underline{w})\in L^2(0, \delta),  \\
\dys \frac{d\underline{w}}{dx}  = K \phi^{\ominus} (\underline{w})  \,\, \text{in}\,\,(0,\delta),\\
\underline{w}(0)=0\,, 
\end{cases}
$$
 such that $\underline{w}(\delta)=-v(L-\delta)$.  To conclude we  define $w^{\ominus}= - \underline{w}$ and observing that 
$$
 \phi (w^{\ominus}) \in L^{2}(0,\delta) \iff \phi^{\oplus} (w^{\ominus})\in L^{2}(0,\delta) \iff \phi^{\ominus} (\underline{w})\in L^{2}(0,\delta)
$$
we get that $w^\ominus$ satisfies \eqref{ovdelta}. 

 \begin{figure}[htbp]\centering
\includegraphics[width=3in]{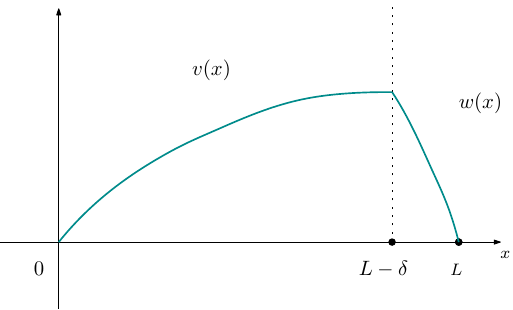}
\caption{Gluing up $v(x)$ and $w(x)$}
\end{figure}

\end{proof}

\section{Stability and instability of the solutions of  approximate equations}\label{Sec7}

The following  proposition shows that any solution of \eqref{pb1} in the sense of Definition \ref{defin} is  not isolated, or, in other terms, can be obtained as a limit of solutions of convenient {approximating} problems, each of those problems being different from the limit one. Let us stress that in these approximating  problems the sequence $\phi_n$  can be any reasonable approximation of $\phi$, but that the sequence $g_n$ has to be chosen accordingly.  
\begin{proposition}\label{stab}
Assume that \eqref{cond1} holds true, and that  the data $(a,g,\phi)$ satisfy hypotheses \eqref{a1}--\eqref{condphi1}.  Assume also   that there exists a solution $u\in \huzu$ of problem \eqref{pb1} in the sense of Definition \ref{defin}, i.e. 
\begin{equation}
\begin{cases}\label{71}
u\in \huzu, \, \,\,\,  \phi(u)\in L^2(0,L), \\\\
\dys -\frac{d}{dx}\left(a (x) \frac{du}{dx}\right)  = - \dys\frac{d \phi (u)}{dx} - \frac{dg}{dx} \ \  \text{in}\,\;\mathcal{D}'(0,L)\,.
\end{cases}
\end{equation}
\indent Fix any  reasonable approximation $\phi_n$ of $\phi$,  with $\phi_n\in C_{0}^{b} (\re)$,   $\phi_n\neq \phi$ for any $n\in\mathbb{N}$.  Then there exists a sequence  $$\text{$g_n\in L^2(0,L)$   with  $g_n\to g $ strongly in $L^2(0,L)$,}$$ such that there exists at least a    classical weak  solution $u_n$ of 
\begin{equation}
\begin{cases}\label{trivial}
u_n\in \huzu, &  \\\\
\dys -\frac{d}{dx}\left(a (x) \frac{du_n}{dx}\right)  = - \dys\frac{d \phi_n (u_n)}{dx} - \frac{d g_n}{dx} \ \  \text{in}\,\,\mathcal{D}'(0,L)\,,
\end{cases}
\end{equation}
such that 
$$u_n \rightarrow u \ \ \text{strongly in}\ \ H^1_0 (0,L)\,.$$
\end{proposition}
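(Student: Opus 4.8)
The plan is to take $u_n:=u$ for every $n$ and to \emph{define} the source terms $g_n$ so that the given solution $u$ itself solves each approximate equation; the whole difficulty is then reduced to a single strong-convergence statement in $L^2(0,L)$. First I would use Proposition~\ref{remequiv}: since $u$ is a weak solution of \eqref{pb1} for the data $(a,g,\phi)$, there is a constant $c\in\re$ (given by \eqref{oce}) with
\[
a(x)\frac{du}{dx}=\phi(u)+g+c\quad\text{in }\mathcal{D}'(0,L).
\]
Guided by this identity I would set
\[
g_n:=a(x)\frac{du}{dx}-\phi_n(u)-c=g+\bigl(\phi(u)-\phi_n(u)\bigr).
\]
Because $\phi_n\in C^0_b(\re)$ and $u\in\huzu\subset L^\infty(0,L)$ by Morrey's embedding \eqref{rem210}, one has $\phi_n(u)\in L^\infty(0,L)\subset L^2(0,L)$, so $g_n\in L^2(0,L)$; moreover $a\,du/dx=\phi_n(u)+g_n+c$ holds by construction.

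Applying Proposition~\ref{remequiv} in the reverse direction (the compatibility constant being automatically $c$ since $u\in\huzu$), together with Remark~\ref{2.4}, which identifies for $\phi_n\in C^0(\re)$ the weak solutions in the sense of Definition~\ref{defin} with the classical weak solutions, I conclude that $u_n=u$ is a classical weak solution of the problem with data $(a,g_n,\phi_n)$. With $u_n\equiv u$ the convergence $u_n\to u$ strongly in $\huzu$ is trivial, and everything reduces to proving that $g_n\to g$ strongly in $L^2(0,L)$, i.e. that $\phi_n(u)\to\phi(u)$ strongly in $L^2(0,L)$.

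To prove this last convergence I would proceed exactly as at the end of the proof of Lemma~\ref{weakconvergencephin}. For the almost everywhere convergence I first note that, when $\phi(0)=+\infty$, the fact that $\phi(u)\in L^2(0,L)$ forces $u(x)\neq0$ for a.e.\ $x$ (otherwise $\phi(u)=+\infty$ on a set of positive measure); since a reasonable approximation converges to $\phi$ locally uniformly on $\re\setminus\{0\}$ by property \eqref{phincond3} of Proposition~\ref{rem29}, we get $\phi_n(u(x))\to\phi(u(x))$ a.e.\ in $(0,L)$ (when instead $\phi(0)<+\infty$ one is in the classical case $\phi\in C^0(\re)$ and $\phi_n\to\phi$ uniformly on the compact range of $u$, so the conclusion is immediate). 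To upgrade the a.e.\ convergence to strong $L^2$ convergence I would invoke Vitali's theorem, which requires the $L^2$-equi-integrability of the family $\{\phi_n(u)\}_n$. Splitting $(0,L)$ into $\{|u|\ge\eta\}$ and $\{|u|<\eta\}$, on the first set the values of $u$ lie in a compact subset of $\re\setminus\{0\}$, so $\phi_n(u)\to\phi(u)$ uniformly there and the corresponding contribution is harmless; the whole matter is thus the uniform smallness of $\int_{\{|u|<\eta\}}|\phi_n(u)|^2\,dx$ as $\eta\to0$, uniformly in $n$.

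This uniform control near the zeros of $u$ is the main obstacle, and it is where the sequence $g_n$ must be chosen ``accordingly'' to $\phi_n$. It is immediate for the two standard reasonable approximations, the truncations $\phi_n=T_n(\phi)$ and the homographic approximations, since both satisfy $|\phi_n(s)|\le|\phi(s)|$ (see \eqref{2.61}), whence $|\phi_n(u)|\le|\phi(u)|\in L^2(0,L)$ and Lebesgue's dominated convergence theorem yields $\phi_n(u)\to\phi(u)$ strongly in $L^2(0,L)$ at once, which completes the argument for these $\phi_n$. For a general reasonable approximation one must exploit the control of the approximating family near $s=0$ to secure the same equi-integrability of $\{\phi_n(u)\}_n$; this is precisely the delicate point on which the strong, rather than merely weak, convergence $g_n\to g$ hinges.
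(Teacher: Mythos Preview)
Your approach is exactly the paper's: set $u_n:=u$ and $g_n:=g+\phi(u)-\phi_n(u)$, so that $u$ itself solves each approximate problem. The paper's own proof is in fact briefer than yours: after writing $a\,du/dx=\phi(u)+g+c$ via Proposition~\ref{remequiv} and defining $g_n=\phi(u)-\phi_n(u)+g$, it simply declares ``then $u_n=u$ is a solution of \eqref{trivial}, and the theorem is proved'', without ever addressing why $\phi_n(u)\to\phi(u)$ strongly in $L^2(0,L)$.

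You go further by supplying that missing step for the approximations satisfying $|\phi_n|\le|\phi|$ (truncations, homographic), where dominated convergence does the job; this part is correct. Your hesitation about the \emph{general} reasonable approximation is well founded, and is not just a technicality you failed to close: Definition~\ref{reasonable} (via Proposition~\ref{rem29}) controls $\phi_n$ near $s=0$ only from \emph{below} (see \eqref{phincond4}), not from above, so nothing forces $\{\phi_n(u)\}_n$ to be $L^2$-equi-integrable near the zeros of $u$. For a concrete obstruction, take $\phi(s)=|s|^{-1/2}$, $u(x)=x^{3/4}(L-x)^{3/4}\in\mathcal{U}$, and $\phi_n(s)=T_n(\phi(s))+n^2\psi(n^3 s)$ with $\psi\in C^\infty_c(\re)$, $\psi\ge0$, $\psi(0)=1$, $\mathrm{supp}\,\psi\subset[-1,1]$. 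One checks easily that $\phi_n\in C^0_b(\re)$ is a reasonable approximation of $\phi$, yet near $x=0$ the set $\{n^3 u\le 1/2\}$ has measure $\sim n^{-4}$ while $n^2\psi(n^3u)\ge n^2/2$ there, so $\|n^2\psi(n^3u)\|_{L^2}^2\gtrsim n^4\cdot n^{-4}=\mathrm{const}>0$, and $\phi_n(u)\not\to\phi(u)$ in $L^2$. Thus with this $\phi_n$ the paper's choice $g_n=g+\phi(u)-\phi_n(u)$ does \emph{not} converge to $g$ strongly.

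In short: your proof matches the paper's and actually fills in more detail; the gap you point out for arbitrary reasonable approximations is genuine and is shared by the paper's proof as written.
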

\begin{proof}[\bf Proof]
As $u$ is a solution of problem \eqref{pb1} in the sense of Definition \ref{defin},  \eqref{71} is equivalent to the existence of some $c\in\re$ such that 
 $$
a (x) \frac{du }{dx}  = \phi (u) + g+ c\,,
$$
in view of Proposition \ref{remequiv}. \newline 
\indent Let $\phi_n$  be any reasonable approximation of $\phi$,  with $\phi_n\in L^\infty (\re)$, $\phi_n\neq \phi$.   Define 
$$
g_n = \phi(u)-\phi_n(u) +g;
$$
then $u_n=u$ is a solution of \eqref{trivial}, and the theorem is proved.

\end{proof}

\begin{remark}
The reader can be surprised  by the fact $u_n$ is   equal to $u$ for every $n$. This  can be amended in the following way: take any interval $[A,B]$ with  $0<A<B<L$ for which $u(x)\geq \delta$ on $[A,B]$ (or for which $u(x)\leq -\delta$ on $[A,B]$) for a certain $\delta>0$, and replace $u$ by 
$$
u_n=\begin{cases}
u& \text{in}\ [0,L]\backslash (A,B), \\
v_n &\text{in}\ [A,B]\,,
\end{cases}
$$
 where $v_n \in H^{1}(A,B)$ is any sequence such that $(v_n- u)\to 0$ in $H^{1}_{0}(A,B)$, with $v_n\geq {\eta}$ (or with $v_n\leq {-\eta}$) for a certain $\eta>0$. 
 
\qed
\end{remark}

As a counterpart of the previous result,  we present the following  one,  which shows that, if $\phi(0) =+\infty$,  then the choice of the sequence $g_n$ is crucial in order to obtain that the solutions of the approximating problems can converge to a given solution as in Proposition \ref{stab}. In fact we will show that a bad choice of $g_n$ can lead to a  sequence of "approximating problems" whose solutions converge to $u=0$.

\begin{proposition}
Assume that \eqref{cond1} holds true, and that  the data $(a,g,\phi)$ satisfy hypotheses \eqref{a1}--\eqref{p3}.  Assume also  that there exists a solution $u\in \huzu$ of problem \eqref{pb1} in the sense of Definition \ref{defin}, i.e. 
\begin{equation}
\begin{cases}
u\in \huzu, \,\,\,\, \phi(u)\in L^2(0,L), \\\\
\dys -\frac{d}{dx}\left(a (x) \frac{du}{dx}\right)  = - \dys\frac{d \phi (u)}{dx} - \frac{dg}{dx} \ \  \text{in}\,\,\mathcal{D}'(0,L)\,.
\end{cases}
\end{equation}
\indent Fix any reasonable approximation $\phi_n$ of $\phi$, with  $\phi_n\in C_{0}^{b} (\re)$\bk. Then one can extract  a subsequence  $\phi_{n'}$ and  find  a sequence (indexed by $n'$) $g_{n'}$  such that   $${g}_{n'} \in L^2(0,L)  \ \ \   {{g}_{n'}}\to g \ \ \text{strongly in}\ \  L^2(0,L), $$ such that  for any (classical weak)  solution $u_{n'}$, granted by Proposition \ref{propexis}, of the "approximating problems"
\begin{equation}
\begin{cases}\label{trivial2}
u_{n'}\in \huzu,   \\\\
\dys -\frac{d}{dx}\left(a (x) \frac{du_{n'}}{dx}\right)  = - \dys\frac{d \phi_{n'} (u_{n'})}{dx} - \frac{d g_{n'}}{dx} \ \  \text{in}\,\,\mathcal{D}'(0,L)\,,
\end{cases}
\end{equation}
 one has  $$u_{n'} \rightharpoonup 0 \ \ \text{in}\ \ H^1_0 (0,L)\,.$$ 
\end{proposition}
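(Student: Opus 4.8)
The plan is to choose the perturbed data $g_{n'}$ by truncating $g$ from below at a level tied to the blow-up of $\phi_{n'}$ at the origin. Since $\phi_{n'}$ is a reasonable approximation of $\phi$ and $\phi(0)=+\infty$, taking $s_{n'}=0$ in \eqref{phincond2} gives $P_{n'}:=\phi_{n'}(0)\to+\infty$. I would then set $M_{n'}=\tfrac12 P_{n'}$ (passing to a subsequence of $\phi_{n'}$ if needed, so that $P_{n'}>0$ and $M_{n'}\nearrow+\infty$) and define
$$g_{n'}=\max\{g,\,-M_{n'}\}.$$
Because $|g_{n'}-g|\le |g|\,\mathbf 1_{\{g<-M_{n'}\}}\to 0$ a.e. while $|g_{n'}-g|\le|g|\in L^2(0,L)$, dominated convergence yields $g_{n'}\to g$ strongly in $L^2(0,L)$, as required. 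The gain of this choice is that, for each fixed $n'$, the datum $g_{n'}$ is bounded from below, namely $g_{n'}\ge -M_{n'}$.

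Next I would set up the limit procedure exactly as in Section~\ref{Sec3}. Let $u_{n'}$ be any classical weak solution of \eqref{trivial2} for the data $(a,g_{n'},\phi_{n'})$; these exist by Proposition~\ref{propexis}, and by \eqref{propdubis}--\eqref{propduibis} they satisfy $\|u_{n'}\|_{H^1_0(0,L)}\le \tfrac1\alpha\|g_{n'}\|_{L^2(0,L)}$, a bound that stays uniformly finite since $g_{n'}\to g$. Hence from any subsequence I can extract a further one, still denoted $u_{n'}$, with $u_{n'}\rightharpoonup u^\ast$ weakly in $H^1_0(0,L)$ and, by Proposition~\ref{remequiv}, with $a\,u_{n'}'=\phi_{n'}(u_{n'})+g_{n'}+c_{n'}$ for constants $c_{n'}$ that, up to a further extraction, satisfy $c_{n'}\to\tilde c$ in $\re\cup\{+\infty\}\cup\{-\infty\}$. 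By Proposition~\ref{prop35} the goal $u_{n'}\rightharpoonup 0$ is then reduced to proving $\tilde c=-\infty$ (equivalently $u^\ast=0$): indeed, once every subsequential weak limit is shown to be $0$, the whole sequence converges weakly to $0$.

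The heart of the argument is a quantitative, non-singular version of the forbidden-region Lemma~\ref{lemma36}. For a fixed $n'$, since $\phi_{n'}\in C^0(\re)$ there is $\rho_{n'}>0$ with $\phi_{n'}(s)\ge P_{n'}-1$ whenever $|s|\le\rho_{n'}$. If $u_{n'}\not\equiv 0$, then at any zero $x_0$ of $u_{n'}$ the continuity of $u_{n'}$ provides a neighbourhood on which $|u_{n'}|\le\rho_{n'}$, where
$$a\,u_{n'}'=\phi_{n'}(u_{n'})+g_{n'}+c_{n'}\ge (P_{n'}-1)-M_{n'}+c_{n'}.$$
If the right-hand side were positive, one could run verbatim the argument of Theorem~\ref{36} (a cone/forbidden region at each zero as in \eqref{FC}, then an intermediate-value contradiction between two consecutive zeros), forcing $u_{n'}\equiv0$. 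Therefore every nonzero solution must satisfy $(P_{n'}-1)-M_{n'}+c_{n'}\le 0$, that is
$$c_{n'}\le M_{n'}+1-P_{n'}=1-\tfrac12 P_{n'}\longrightarrow-\infty.$$
Consequently, along any subsequence the solutions are either identically zero (so trivially $u^\ast=0$) or nonzero with $c_{n'}\to-\infty$, whence $\tilde c=-\infty$ and $u^\ast=0$ by Proposition~\ref{prop35}; in all cases $u_{n'}\rightharpoonup 0$.

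The step I expect to be the main obstacle is the rigorous adaptation of Lemma~\ref{lemma36} to the regular nonlinearities $\phi_{n'}$: here $\phi_{n'}(0)=P_{n'}$ is finite, so the slope near a zero is only forced to be large rather than infinite, and one must check that the resulting fixed positive lower bound $k_{n'}=(P_{n'}-1-M_{n'}+c_{n'})/\beta$ on $u_{n'}'$ genuinely reproduces the two-sided cone estimate \eqref{FC} and the intermediate-value contradiction of Theorem~\ref{36} on neighbourhoods of controlled (possibly shrinking) width. The decisive idea---coupling the truncation level $M_{n'}$ of $g$ to the divergence rate $P_{n'}=\phi_{n'}(0)$, so that $P_{n'}-M_{n'}\to+\infty$ while still $M_{n'}\to+\infty$---is precisely what renders the two competing requirements, $g_{n'}\to g$ in $L^2(0,L)$ and the collapse $u_{n'}\rightharpoonup 0$, simultaneously achievable.
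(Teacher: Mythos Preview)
Your approach is correct and genuinely different from the paper's.

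The paper proceeds by a diagonal argument: it fixes any sequence $\overline{g}_n\in L^\infty(0,L)$ converging strongly to $g$, and for each fixed $n$ applies Theorem~\ref{36} together with Theorem~\ref{maintheo} (the Alternative) to conclude that every solution $v_n^k$ of the problem with data $(a,\overline{g}_n,\phi_k)$ converges weakly to $0$ as $k\to\infty$. One then picks $\overline k(n)$ large enough that $\|v_n^{\overline k(n)}\|_{L^2}\le\varepsilon_n$, sets $n'=\overline k(n)$ and $g_{n'}=\overline g_n$, and the diagonal sequence does the job.

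Your route is more direct and more quantitative: by tying the truncation level $M_{n'}=\tfrac12\phi_{n'}(0)$ of $g$ to the blow-up of $\phi_{n'}$ at the origin, you obtain for \emph{every} solution $u_{n'}$ the explicit bound $c_{n'}\le 1-\tfrac12\phi_{n'}(0)\to-\infty$, and then invoke the equivalence in Proposition~\ref{prop35}. This avoids the diagonal extraction entirely, produces an explicit $g_{n'}$, and handles the clause ``for any classical weak solution $u_{n'}$'' transparently, since the bound on $c_{n'}$ is derived solution by solution. The step you flag as delicate---running the forbidden-region argument of Lemma~\ref{lemma36} and Theorem~\ref{36} with a \emph{fixed} positive slope $k_{n'}$ rather than an arbitrarily large one---is in fact harmless: the proof of Theorem~\ref{36} only uses that the slope near each zero is strictly positive, not that it can be made large, so it carries over verbatim. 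One minor imprecision: when $(P_{n'}-1)-M_{n'}+c_{n'}>0$ the cone argument does not force $u_{n'}\equiv 0$ but rather yields an outright contradiction (so no solution with that $c_{n'}$ exists); since solutions do exist by Proposition~\ref{propexis}, the inequality $c_{n'}\le 1-\tfrac12 P_{n'}$ follows for every solution.
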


\begin{proof}[\bf Proof]
Consider  $\overline{g}_n$ a given sequence such that 
$$
\overline{g}_n \in L^\infty (0,L)\ \ \text{and}\ \ \overline{g}_n\to g \ \text{strongly in}\ L^2(0,L)\,. 
$$
Now, fix $n\in\mathbb{N}$ and consider the (classical weak) solutions   $v_n^k$ of 
\begin{equation}
\begin{cases}\label{vkn}
v_n^k\in \huzu,  \\\\
\dys -\frac{d}{dx}\left(a (x) \frac{d v_n^k}{dx}\right)  = - \dys\frac{d \phi_k ( v_n^k)}{dx} - \frac{d \overline{g}_n}{dx} \ \  \text{in}\,\,\mathcal{D}'(0,L)\,.
\end{cases}
\end{equation}

As $\phi_k$ is in $C_{0}^{b} (\re)$ then by Proposition \ref{propexis} such a $v_n^k$ does exist. Moreover
\begin{equation}
\label{propdun2}
\left\|\frac{d v_n^k}{dx}\right\|_{L^2(0,L)}\leq \frac{1}{\alpha} \|\overline{g}_n\|_{L^2(0,L)}\leq C, 
\end{equation}
where $C$ does not depend on $k$ and $n$.

 As $n$ is fixed and $\overline{g}_n \in L^\infty (0,L)$ then, using Theorem \ref{36},  one has that 
\begin{equation}
 v_n^k \rightharpoonup 0\ \ \text{weakly in $\huzu$}\ \ \text{as}\ \ k\to\infty\,,
\end{equation}
and strongly in $L^2 (0,L)$. In particular, if $\vare_n$ is a sequence of positive vanishing constant,   for fixed $n$ one can pick ${k^*}(n)$ such that 
\begin{equation}\label{68}
\| v_n^{{k}}\|_{L^2(0,L)}\leq \vare_n\ \ \text{for any}\ \ k\geq k^*(n)\,.
\end{equation}
It is clear that, as $n$ is fixed, one can choose a strictly increasing  function $\overline{k}:\mathbb{N}\mapsto\mathbb{N}$ such that
$$
\overline{k}(n)\geq k^*(n), \ \ \text{for any}\ n\in\mathbb{N}\,. 
$$

Let us indicate by $\mathbb{J}=\overline{k}(\mathbb{N})$ the image of such bijection. Now, with the following notations 
 $$
 n'=\overline{k}(n) \ (\text{i.e. $n=\overline{k}^{-1} (n')$})\ \ \,  \text{and}\ \ {g}_{n'}=\overline{g}_{\overline{k}^{-1}(n')}\,,
 $$ 
as $u_{n'}=v_n^{\overline{k}(n)}$, then \eqref{vkn} (at $k=\overline{k}(n)$) reads as
 \begin{equation}\label{65j}
\begin{cases}
u_{n'} \in \huzu,  \\\\
\dys -\frac{d}{dx}\left(a (x) \frac{d u_{n'}}{dx}\right)  = - \dys\frac{d \phi_{n'} (u_{n'})}{dx} - \frac{d {g}_{n'}}{dx} \ \  \text{in}\,\,\mathcal{D}'(0,L)\,.
\end{cases}
\end{equation}
Now, using \eqref{68}, one has that
$$
\|u_{n'}\|_{L^2(0,L)}\leq \vare_{\overline{k}^{-1} (n')}=\eta_{n'}\,,
$$
where $\eta_{n'}\to 0$ as $n'\to \infty$,  i.e. 
$$
u_{n'} \rightharpoonup 0\ \ \text{weakly in $\huzu$}\ \ \text{as}\ \ n'\to\infty\,,
$$
and the proof is concluded. 
\end{proof}
\begin{remark}
In the proof of Proposition \ref{stab} we  saw that  the choice of $g_n$ is determined by the choice of the approximating sequence $\phi_n$.  In contrast it is not clear how to choose a suitable reasonable approximation $\phi_n$ when a sequence $g_n$ is given. \newline\indent  In other terms,   when approximating  problem \eqref{pb1} the choice of $g_n$ has to be conveniently made  in order to  balance  the velocity of approximations of $\phi_n$  if one does not want to end up  with $u=0$. 

 \qed
\end{remark}

\vspace{2cm}
\section*{Acknowledgements}
\smallskip
\noindent The authors were partially supported by  the Gruppo Nazionale per l'Analisi Matematica, la Probabilit\`a e le loro Applicazioni (GNAMPA) of the Istituto Nazionale di Alta Matematica (INdAM). The second author was supported by the Ministerio de Ciencia, Innovaci\'on y Universidades (MCIU), Agencia Estatal de Investigaci\'on (AEI), and Fondo Europeo de Desarrollo Regional (FEDER) under Research Project PGC2018-096422-B-I00, Junta de Andaluc\'ia, Consejer\'ia de Transformaci\'on Econ\'omica, Industria, Conocimiento y Uni\-ver\-si\-dades-Uni\'on Europea grant UAL2020-FQM-B2046 and  FQM-116.

\vspace{1cm}

\end{document}